\documentclass[11 pt]{amsart}

\NeedsTeXFormat{LaTeX2e}% LaTeX 2.09 can’t be used (nor non-LaTeX)
[1995/06/01]% LaTeX date must be June 1995 or later

\ProvidesClass{amsart}[2017/10/31 v2.20.4]
\ProvidesClass{amsproc}[2017/10/31 v2.20.4]
\ProvidesClass{amsbook}[2017/10/31 v2.20.4]

\usepackage{amssymb,amsmath}
\usepackage{graphicx}
\usepackage{mathdots}
\usepackage{amsbsy}
\usepackage{amscd}
\usepackage{amsthm}
\usepackage{mathrsfs}
\usepackage{verbatim}
\usepackage[colorlinks]{hyperref}
\usepackage{authblk}
\usepackage{fullpage}
\usepackage[english]{babel}
\usepackage[T1]{fontenc}
\usepackage[utf8]{inputenc}
\usepackage{enumitem}

\input xy
%\xyoption{all}
\usepackage[all]{xy}

\usepackage{url}
\usepackage[english]{babel}
\usepackage[T1]{fontenc}
\usepackage[utf8]{inputenc}
\usepackage{booktabs}
\usepackage{blkarray}
\usepackage[fleqn,tbtags]{mathtools}
\usepackage{graphics,graphicx}
\usepackage{pstricks,pst-node,pst-tree}

\usepackage{tikz}
\usetikzlibrary{backgrounds,fit,decorations.pathreplacing}
\usetikzlibrary{arrows,shapes,positioning}
\usetikzlibrary{calc,through,backgrounds,chains}
\usetikzlibrary{snakes,automata,backgrounds,petri}
\usetikzlibrary{arrows.meta}
\usetikzlibrary{shapes,decorations,arrows,calc,arrows.meta,fit,positioning}
%% The following commands all in one way or another set us up to be able to draw graphs.
%%
%% The calc package is used for calculating angles to evenly space vertices in circular arrangements.
\usepackage{calc}
%%
%% The tikz package is used for doing the actual drawing.
%\usepackage{tikz}
%%
%% In order to be able to put arrowheads in the middle of directed edges, we need an extra library.
\usetikzlibrary{decorations.markings}
%%
%% The next line says how the "vertex" style of nodes should look: drawn as small circles.
\tikzstyle{vertex}=[circle, draw, inner sep=0pt, minimum size=6pt]
%%
%% Next, we make a \vertex command as a shorthand in place of \node[vertex} to get that style.

%%
%% Finally, we declare a "counter", which is what LaTeX calls an integer variable, for use in
%% the calculations of angles for evenly spacing vertices in circular arrangements.

%%%%%%%%------DEFINE-----%%%%%%%%

%%%%%%------THEOREM ENVIRONMENTS-------%%%%%%%%
\theoremstyle{definition}
\newtheorem{Theorem}{Theorem}[section]
\newtheorem{Lemma}[Theorem]{Lemma}
\newtheorem{Example}[Theorem]{Example}
\newtheorem{Remark}[Theorem]{Remark}
\newtheorem{Definition}[Theorem]{Definition}
\newtheorem{Corollary}[Theorem]{Corollary}
\newtheorem{Proposition}[Theorem]{Proposition}
\newtheorem*{Theorem*}{Theorem}

%%%%%%--------MATHBB-------%%%%%%%%%%%
\newcommand{\C}{\mathbb{C}}
\newcommand{\N}{\mathbb{N}}

\newcommand{\Z}{\mathbb{Z}}

\newcommand{\mc}[1]{\mathcal{#1}} % short for mathcal
 % short for mathscript
 % short for mathfrak
 % short for math bold face
 % short for mathblackboard

\definecolor {processblue}{cmyk}{0.96,0,0,0}

%%%%%%------DEFINE-----%%%%%%%%

\begin{document}

\mathtoolsset{showonlyrefs}

\title{Stability of the Hecke algebra of wreath products}

\author{\c{S}afak \"Ozden\footnote{Address: Weigandufer 7, 12045, Berlin. email: sozden@tulane.edu.}}
\maketitle

{\centering \c{S}afak \"Ozden\footnote{Address: Weigandufer 7, 12045, Berlin. email: sozden@tulane.edu.}\par }

{\centering\footnotesize In memory of Cemal Koç\footnote{ http://journals.tubitak.gov.tr/math/issues/mat-10-34-4/mat-34-4-1-1009-43.pdf}\par}

\begin{abstract}
The Hecke algebras $\mathcal{H}_{n,k}$ of the group pairs $(S_{kn}, S_k\wr S_n)$ can be endowed with a filtration with respect to  
the orbit structures of the elements of $S_{kn}$ relative to the action of $S_{kn}$ on the set of $k$-partitions
of $\{1,\dots,kn\}$. We
prove that the structure constants of the associated filtered algebra $\mathcal{F}_{n,k}
$ is independent of $n$. The stability property enables the construction of a universal algebra
$\mc{F}$ to govern the algebras $\mathcal{F}_{n,k}$. We also prove that the
structure constants of the algebras $\mathcal{H}_{n,k}$ are polynomials in $n$. For $k=2$, when the algebras $(\mathcal{F}_{n,2})_{n\in \mathbb{N}}$ are commutative, these results were obtained in \cite{AC}, \cite{saf17} and \cite{tout14}.\\ \\
%\textbf{Keywords:} non-abelian Farahat-Higman rings, structure constants.
\end{abstract}

%\tableofcontents

\section{Introduction}

\subsection{} The asymptotic study of sub-algebras of the group algebras $\C[S_n]$  goes back
to the work of Farahat and Higman \cite{FH}. They study the centers
$\mc{Z}_n=\mc{Z}(\C[S_n])$ of the group algebras $\C[S_n]$. To prove Nakamura's conjecture, they
prove a stability result for the structure coefficients of the algebras
$\mc{Z}_n$. From the perspective of  \cite{Mac} and \cite{WW18}, we
can describe the method of Farahat and Higman as
follows:
\begin{enumerate}
\item Construct a conjugacy invariant numerical function on the symmetric group to measure the complexity of a given element.
\item Endow the centers $\mc{Z}_n$ of the group algebras $\C[S_n]$ with a suitable
filtration invariant under conjugation.
\item (Stability property) Prove
that the structure constants of the associated
filtered algebra $\mc{Z}_n$ are independent of $n$, hence obtain a universal algebra $\mc{Z}$ that governs all the algebras $\mc{Z}_n$.
\end{enumerate}
The term \textit{stability} is introduced in
\cite{WW18}. Wang used this strategy and proved that the family $(G\wr
S_n)_{n\in \N}$, with $G$ finite, satisfies the stability property, \cite{W04}. Recently,
Wan and Wang applied the same strategy to
the family $(GL_n(q))_{n\in \N}$, \cite{WW18}.
Using the filtration induced by the reflection
length they proved that the family $(GL_n(q))_{n\in
\N}$ also satisfies the stability property. Following
\cite{WW18}, we proved that the family
$(Sp_{2n}(q))$ satisfies the stability
property with respect to the reflection length induced from $GL_{2n}(q)$, \cite{safthesis}.

%%%%%%%%%%%%%%%%%%%%%%%%%%%%%%%%%%%%%%%%%%%%%%%%%%%%%%%%%%%%%%%%%%%%%%%%%%%%%%%%%%%%%%%%%%%% SECOND PARAGRAPH %%%%%%%%%%%%%%%%%%%%%%%%%%%%%%%%%%%%%%%%%%%%%%%%%%%%%%%%%%%%%%%%%%%%%%%%%%%%%%%%%%%%%%%%%%%%

%%%%%%%%%%%%%%%%%%%%%%%%%%%%%%%%%%%%%%%%%%%%%%%%%%%%%%%%%%%%%%%%%%%%%%%%%%%%%%%%%%%%%%%%%%%% 3RD PARAGRAPH %%%%%%%%%%%%%%%%%%%%%%%%%%%%%%%%%%%%%%%%%%%%%%%%%%%%%%%%%%%%%%%%%%%%%%%%%%%%%%%%%%%%%%%%%%%%

Farahat and Higman's approach produces a master algebra which maps surjectively onto individual algebras, but there are no direct relation between the individual algebras. Molev and Olshanski construct the individual algebras out of partial permutations, with natural maps between the consecutive algebras. Consequently, the master algebra is constructed as a projective limit. Using partial isomorphism method, Ivanov
and Kerov
calculated the structure constants of $\mc{Z}_n$ and
proved that the family $(S_n)_{n\in \N}$ satisfies
the stability property, \cite{ivanov2001algebra}. M{\'e}liot also used the partial isomorphism method, \cite{PM14}, and
proved that the family $GL_n(q)$ satisfies the
stability property. Likewise, Tout proved that the structure constants of the family $S_k\wr S_n$ are polynomials in $n$, \cite{tout17}.

%%%%%%%%%%%%%%%%%%%%%%%%%%%%%%%%%%%%%%%%%%%%%%%%%%%%%%%%%%%%%%%%%%%%%%%%%%%%%%%%%%%%%%%%%%%% PARAGRAPH 5 %%%%%%%%%%%%%%%%%%%%%%%%%%%%%%%%%%%%%%%%%%%%%%%%%%%%%%%%%%%%%%%%%%%%%%%%%%%%%%%%%%%%%%%%%%%%

\subsection{}  The central algebra $\mc{Z}(\C[G])$ can be viewed as the Hecke algebra of group pair, which enables one to generalize the previous work to a wider a class of algebras. In fact, recall that when $G$ is a finite group and $H$ its a subgroup, then the Hecke algebra $\mathcal{H}(G,H)$
attached to the pair $(G,H)$ is defined as the sub-algebra of the complex valued functions on $G$
that are invariant along $H$-double cosets. If two $\C$-valued functions $\phi$ and $\psi
$ on $G$ are contained in the $\mathcal{H}(G,H)$ then their multiplication $\phi\cdot \psi$ is defined by the rule

\begin{equation}\label{convolution product}
(\phi \cdot \psi)(x):=\sum_{y\in G}\phi(y)\psi(y^{-1}x)=\sum_{yz=x}\phi(y)\psi(z).
\end{equation}
The algebra $\mc{Z}(\C[G])$ is isomorphic to the Hecke algebra $\mathcal{H}(diag(G)\backslash G\times G/ diag(G)$ where $diag(G)=\{(g,g)\in G\times
G:g\in G\}$, cf. \cite[Th.1.5.22]{CTS10}. As a result, the class of double-coset algebras is more general than the class of the centers of group algebras. One can pose the question
of stability property for the family of double-coset algebras.
In the literature, the only double-coset algebra so far considered is
$(S_{2n},B_n)$, where $B_n$ is the centralizer of
$\tau_n=(1,2)(3,4)\cdots (2n-1, 2n)$. It turns out that the family of the double-coset algebras of the pair $(S_{2n},B_n)$ also satisfies the stability property.
This result was obtained by Aker and Can in
\cite{AC}, and by Can and the author in \cite{saf17}
as generalizations of the Farahat and
Higman method. Tout proved the same stabiltity as a
generalization of the partial isomorphism method of Ivanov-Kerov, \cite{tout14}. We note that the double-coset algebra of the pair $(S_{2n},B_n)$ is commutative.

\subsection{}
We consider the family $(\mc{H}_{n,k})_{n\in \N}$ of
algebras $\mc{H}_{n,k}=\C[(S_k\wr S_n)\backslash S_{kn}/(S_k\wr S_n)]$, where $k$ is a fixed positive integer greater or equal to $2$. These algebras are non-commutative unless $k=2$. We show that there is a natural
filtration on the non-commutative Hecke algebra  
$\mc{H}_{n,k}$, and the structure constants of the induced filtered
algebra $\mc{F}_n$ is independent of $n$. In particular, $\mc{H}_{n,k}$
satisfies the stability property.
Taking $k=2$ we reproduce the case of the pair $(S_{2n},B_n)$ mentioned above. We use the
Farahat-Higman approach but our proof differs
from the one presented in \cite{AC}. The
proof in \cite{AC} relies on a twist of the
automorphism $g\longmapsto g^{\tau_n}$,
which does not have a direct generalization
to the case $k\neq 2$. Instead we focus
on the action of $S_{kn}$ on the set of
$k$-partitions of the set $\{1,2,\cdots,kn\}$
and reduce the proof of the stability
property to proving that the \textit{"orbits"}
of certain elements must intersect \textit{"transversely"}.
In the rest of the
introduction we briefly explain these terms.

The group $S_{kn}$ acts transitively
on the set of
$k$-partitions of the set $[kn]:=\{1,\cdots,kn\}$.
The wreath product $S_k\wr S_n$ can be identified with the stabilizer of the $k$-partition
$$\{1,2,\cdots,k\},\\ \cdots, \{k(n-1)+1,\cdots,kn\}$$
of $[kn]$. For $g\in S_{kn}$, we define a
graph $G_g$ (Def. \ref{graph of an element}), called the \textit{type of} $g$,
on the set of $n$-vertices. The isomorphism class
of the graph $G_g$ completely determines the $S_k\wr
S_n$-double coset of $g\in S_{kn}$. We
define the modified type ${G^{\circ}_g}$
as the graph obtained from $G_g$ by removing
the connected components that consist of a
single vertex. The modified type of an
element does not change under the embedding
$S_{kn}\hookrightarrow S_{k(n+1)}$. The set
of modified types with $n$-vertices is
denoted by $\mc{G}_{n,k}$ and the union of modified types is denoted by $\mc{G}_{\infty,k}$. Let $M\in \mc{G}_{n,k}$
be a modified type. The sizes of the connected components of $M$ define a
partition $\lambda_M$ of $n$. If
$\lambda_M=(\lambda_1,\cdots,\lambda_t)$,
the weight $||M||$ of the graph
$M$ is defined as the integer
$\sum_{i=1}^t(\lambda-1)$. The algebra
$\mc{H}_n$ is generated by the double coset sums $$S_M(n):=\sum_{\substack{g\in S_{kn}\\ {G_g}^{\circ}=M}}g.$$
The multiplication in the algebra $\mc{H}_n$ can be written as
\[
S_M(n)\cdot S_N(n) = \sum_{ ||L||\leq ||M||+||N||} c_{M,N}^L(n) \cdot S_L(n),
\]
where $M,N,L\in \mc{G}_{\infty,k}$. We are ready to state the main result of this paper.

\begin{Theorem}
The functions $c_{M,N}^L(n)$ are polynomials in $n$. If the equality $||L||= ||M||+||N||$ holds then the function $c_{M,N}^L(n)$ is independent of $n$.
\end{Theorem}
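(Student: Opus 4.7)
The plan is to adapt the Farahat--Higman template to the wreath-product setting, with the combinatorial core reformulated through the graphs $G_g^\circ$ on $[n]$. Writing $H_n = S_k \wr S_n$, I would begin by expressing $c_{M,N}^L(n)$ (in the appropriate Hecke-algebra normalization) as the number of $H_n$-orbits of factorization pairs
\[
\{(g,h) \in S_{kn} \times S_{kn} \,:\, G_g^\circ = M,\ G_h^\circ = N,\ gh = \ell\},
\]
for a fixed representative $\ell$ with $G_\ell^\circ = L$; this count is manifestly $H_n$-bi-invariant and reduces the theorem to a combinatorial problem about factorizations. Letting $V(x) \subseteq [n]$ denote the vertex set of $G_x^\circ$ and $v(X), c(X)$ the number of vertices and connected components of a modified type $X$, the identity $||X|| = v(X) - c(X)$ serves as the bridge between the weight filtration and graph-theoretic counts.

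The main inequality $||L|| \leq ||M|| + ||N||$ would then follow from a two-step argument. First, I would establish $V(\ell) \subseteq V(g) \cup V(h)$ and show that each connected component of $G_\ell^\circ$ is contained inside a single component of the superposition $G^\star := G_g^\circ \cup G_h^\circ$ on $V^\star := V(g) \cup V(h)$. Second, a submodular bookkeeping for unions of graphs gives
\[
v^\star - c^\star \leq (v(M) - c(M)) + (v(N) - c(N)) = ||M|| + ||N||,
\]
with equality precisely when every shared vertex of $V(g) \cap V(h)$ transversely merges a component of $G_g^\circ$ with one of $G_h^\circ$. Combining this with $v(L) - c(L) \leq v^\star - c^\star$, which holds because each extra vertex in $V^\star \setminus V(\ell)$ raises $c^\star$ by at most the amount by which it raises $v^\star$, yields the desired bound.

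When $||L|| = ||M|| + ||N||$, tightness along the chain forces $V(g) \cup V(h) = V(\ell)$ and transverse component overlap, so every admissible pair $(g,h)$ lives entirely inside the fixed set $V(\ell)$ of size $v(L)$, independent of $n$; the orbit count therefore reduces to the identical problem inside $S_{k\cdot v(L)}$, yielding a number depending only on $L, M, N$. For the general polynomial statement, I would stratify factorizations of $\ell$ by the combinatorial ``extension type'' $T$ encoding the excess support $E = V(g)\cup V(h)\setminus V(\ell) \subseteq [n]\setminus V(\ell)$, its split between $V(g)$ and $V(h)$, and the local structure of $g,h$ on $V(\ell)\cup E$. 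For each fixed $T$, the number of placements of $E$ in $[n]\setminus V(\ell)$ is a product of binomial coefficients in $n - v(L)$, and the remaining structural count $a_T$ is independent of $n$ by $S_n$-equivariance; summing over the finitely many types $T$ exhibits $c_{M,N}^L(n)$ as a polynomial in $n$. The principal obstacle will be the graph-union inequality of the second paragraph: distinguishing genuine component merges from redundant shared vertices while simultaneously tracking the within-block $S_k$-degrees of freedom that make the wreath-product setting richer than the plain symmetric-group case.
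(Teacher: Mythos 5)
Your reduction of $c_{M,N}^L(n)$ to counting $H_n$-orbits of factorization pairs $(g,h)$ with $gh=\ell$ is sound (the twisting action is free, so dividing by $|H_n|$ does count orbits), but the combinatorial core of your argument is false as stated: neither $V(\ell)\subseteq V(g)\cup V(h)$ nor the claim that each component of $G_\ell^\circ$ sits inside a component of the superposition $G_g^\circ\cup G_h^\circ$ holds for an arbitrary pair in the fiber. Take $k=2$, $n=6$, $\Gamma_i=\{2i-1,2i\}$, and set $g=(2\,3)$, $h=(1\,5)(2\,6)(3\,7)(4\,8)(10\,11)$, $\ell=gh$ (apply $h$ first). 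Then $G_g^\circ$ is the single component $\{v_1,v_2\}$ and, since $(1\,5)(2\,6)(3\,7)(4\,8)$ lies in $H_\infty$, $G_h^\circ$ is the single component $\{v_5,v_6\}$; but a direct computation gives $\ell^{-1}(1)=5\in\Gamma_3$ and $\ell^{-1}(2)=7\in\Gamma_4$, so $G_\ell^\circ$ has components $\{v_3,v_4\}$ and $\{v_5,v_6\}$. The component $\{v_3,v_4\}$ of $G_\ell^\circ$ is disjoint from $V(g)\cup V(h)=\{v_1,v_2,v_5,v_6\}$, so your containment, your submodular chain $v(L)-c(L)\leq v^\star-c^\star$, and your equality-case conclusion $V(g)\cup V(h)=V(\ell)$ all fail (here $\|L\|=\|M\|+\|N\|=2$, so the theorem itself is not contradicted — only your route to it). The failure is caused exactly by factors of $g$ or $h$ lying in $H_\infty$ that translate $\Gamma$-blocks, which the graphs $G_g$, built from preimages $g^{-1}(\Gamma_u)$, do not see.

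What is missing is a normalization step inside each $H\times H$-orbit before any graph-theoretic bookkeeping: the paper first replaces $(g_1,g_2)$ by an orbit representative for which $g_1$ and $g_1g_2$ are \emph{minimal representatives} and $g_1$ has \emph{closed connected components} (i.e.\ $g_1\bigl(\bigcup_{u\in J_C}\Gamma_u\bigr)=\bigcup_{u\in J_C}\Gamma_u$ for each component $C$ of $G_{g_1}$). Only then does the key identity $V(E^{g_2}_C)=V(E^{g_1g_2}_C)$ hold, exhibiting $G_{g_1g_2}$ as \emph{evolved} from $G_{g_2}$ by edge replacements indexed by the components of $G_{g_1}$, from which $\|G_{g_1g_2}\|\leq\|G_{g_1}\|+\|G_{g_2}\|$ and, in the equality case, $[g_1]_H\subseteq[g_1g_2]_H$ follow. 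Your polynomiality argument (stratifying by extension type and counting placements with binomial coefficients) is a plausible Ivanov--Kerov-style alternative but is only sketched and inherits the same defect, since it presupposes the support analysis above; the paper instead derives polynomiality and the $n$-independence in the top-weight case from the exact formula $c^L_{M,N}(n)=\sum_i |C_{H_n}(g_{1i}g_{2i})|\,/\,|C_{H_n}(g_{1i}g_{2i})\cap g_{1i}H_ng_{1i}^{-1}|$ together with direct-sum decompositions of these centralizers, which produce explicit falling factorials in $n$. To repair your proof you would need to insert the minimal-representative/closed-components normalization (or an equivalent device) before the superposition argument, and re-examine every containment after that change.
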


%It is worth noting the following.
%In order to establish the stability
%property we introduce a graph theoretical construction, which we call edge replacement, cf. Def. \ref{edge replacement definition}. This construction consists of three graphs $M,N,L$ and a sequence of graphs
%$N=N_0,N_1,\cdots,N_t=L$, called the
%\textit{replacement chain}, where the graphs in the chain are obtained using the connected components of $M$. We observe that, in order to guarantee
%the equality $||L||= ||M||+||N||$, the Young
%diagrams $Y_i$ obtained by the sizes of
%the connected components of $N_i$ should satisfy
% the following relationship, which was already
% brought into attention by Ivanov and Kerov in
% \cite[Corollary 8.3]{ivanov2001algebra}: Let $Y$ denote the Young diagram
%   obtained from the sizes of the connected
%   components of $M$. Then
%   $Y_{i+1}$ is obtained
%from $Y_i$
%\begin{enumerate}
% \item By replacing $l_i$ many rows of $Y_i$ of length $n_1,\cdots,n_{t_i}$
% with a row of length $(\sum_{j=1}^{l_i} n_j)-1$, where $Y=(l_1,\cdots,l_t)$.
% \item Keeping the remaining rows intact.
%\end{enumerate}
%This property of the chain replacement allows one to use the
%same method to recover the stability result of Farahat-Higman by
%considering the graph attached to $x\in S_n$ with the %edge set $\{i,x(i)\}$.
%However, we do not present this proof.

\subsection{}
The paper is organized as follows. In section 2 we
define the group $S_k\wr S_n$ as a subgroup of $S_n$ and introduce the necessary notation. In section 3
we review the basics of the
double-coset algebras and
introduce the abstract set-up
that captures the properties
satisfied by the family
$\C[(S_k\wr S_n)\backslash S_{kn}/(S_k\wr S_n)]$. We also present a
way of calculating the structure constants in terms of
centralizers. In section 4, we parameterize the double cosets of $S_k\wr S_n$,
prove the polynomiality of
the structure constants. In section 5 we define the modified type,
construct the universal algebra
$\mc{F}_{\infty,k}$, and state the
stability theorem. Section 6 is devoted to the graph
theoretic construction graph evolution and some of its properties are proved.
In the final section we prove the
stability theorem.

%%%%%%%%%%%%%%%%%%%%%%%%%%%%%%%%%%%%%%%%%%%%%%%%%%%%%%%%%%%%%%%%%%%%%%%%%%%%%%%%%%%%%%%%%%%%%%%%%%%%%%%%%%%%%%%%%%%%%%%%%%%%%%%%%%%%%%%%%%%%%%%%%%%%%%%%%%%%%%%%%%%%%%%%SECTION 2%%%%%%%%%%%%%%%%%%%%%%%%%%%% %%%%%%%%%%%%%%%%%%%%%%%%%%%%%%%%%%%%%%%%%%%%%%%%%%%%%%%%%%%%%%%%%%%%%%%%%%%%%%%%%%%%%%%%%%%%%%%%%%%%%%%%%%%%%%%%%%%%%%%%%%%%%%%%%%%%%%%%

\section{The group $S_k\wr S_n$:}

In this section we will define the group $S_k\wr S_n$ as the stabilizer of an element with respect to the action of $S_{kn}$ on the set of $k$-partitions of the set $\{1,2,\cdots,kn\}$. Following the definition we will elaborate the consequences of the definition. We will also introduce the Hecke algebra of the pair $(S_{kn},S_k\wr S_n)$, which will be our main interest in the rest of the article. We start with introducing the necessary notation.

The set of positive integers is denoted by $\N_+$. For every positive integer $n$ in $\N_+$, the set $\{1,2,\dots, n \}$ is denoted by $[n]$. If $g$ is a permutation of the set of positive integers $\N_+$, then the \textit{support} of $g$ is defined to be the set of the positive integers $\N(g)=\{r\in \N_+: g(r)\neq r \}$ that the are moved by $g$.  Let $A$ be a subset of positive integers $\N_+$. The set of permutations $g$ of $A$ with finite support $\N(g)$ is denoted by $S_A$. If the subset $A$ is equal to $[n]$ for some positive integer $n$, then by abuse of notation we will stick to the usual notation and write $S_n$ rather than $S_{[n]}$. Likewise, if $A=\N_+$ then we will write $S_{\infty}$ rather than $S_{\N_+}$. Elements of $S_{\infty}$ are called \textit{finitary permutations} of $\N$.

Let $k$ be a fixed positive integer greater or equal to $2$. A \textit{$k$-partition} of an arbitrary set $X$ is a collection of disjoint $k$-elemental subsets of $X$ where the union covers $X$. For every positive integer $i$ we introduce the set
\[
\Gamma_i=\{k(i-1)+1,\cdots, ki\}.
\]
The collection $(\Gamma_{i})_{i\in \N_+}$ is a $k$-partition of positive integers $\N_+$. The sets of the form $\Gamma_i$ will be called $\Gamma$-part. Each positive integer $r$ is contained in a unique $\Gamma$-part. If $r\in \Gamma_i$, we say that the $\Gamma$-part of $r$ is $i$, in which case we will write $p(r)=i$.  Two positive integers $r$ and $s$ will be called {\textit{partners}} if
$p(r)=p(s)$, i.e. if they are contained in the same $\Gamma_i$ for some positive integer $i$.
The subgroup $H_{\infty}$ of $S_{\infty}$ is defined to be group of finitary permutations that preserve the partner relationship. More precisely, a permutation $h$ in $S_{\infty}$ is an element of $H_{\infty}$ if and only if the equality $p(r)=p(s)$ implies $ p(h(r))=p(h(s))$,
for all positive integers $r$ and $s$. Notice that the integer $k$ is not used in the notation. But this will not cause any ambiguity since we will only be dealing with a fixed $k$.

The group $H_{\infty}$ can be described in a more suggestive way. In fact, the group of finitary permutations $S_{\infty}$ acts on the set of $k$-partitions of $\N_+$. The point stabilizer of the $k$- partition $(\Gamma_i)_{i\in\N_+}$ is equal to the group $H_{\infty}$. In other words, the group $H_{\infty}$ consists of elements that permutes $\Gamma$-parts. 
%which induces a permutation of the collection $(\Gamma_i)_{i\in \N_+}$. 
Now we generalize the defintion of $H_{\infty}$ to subsets of $\N_+$. Let $A$ be a subset of positive integers $\N_+$ that it can be covered by $\Gamma$-parts. We define $H_A$ as the group of permutations $g$ satisfying the two conditions: 1. The support $\N(g)$ is contained $A$; 2. The permutation $g$ stabilizes the $k$-partition $(\Gamma_i)_{i\in \N_+}$. In other words,
\begin{equation}\label{generalized H-A}
H_A=H_{\infty}\cap S_A.    
\end{equation}
As before, if $A=[kn]$ then by abuse of notation we write
$H_n$ rather than $H_{[kn]}$.

\begin{Remark}\label{tau-ij remark}
Let $n$ be a fixed positive integer. We will define two subgroups $\mc{S}_n$ and $\mc{Y}_n$ of $H_n$ for which the internal semi-direct product $\mc{Y}_n\rtimes \mc{S}_n$ is equal to $H_n$. We start with the definition of $\mc{S}_n$. For each pair of distinct positive integers $i$ and $j$ that are less than or equal to $n$, we define the permutation $\tau_{ij}$ of $[kn]$ by setting
\begin{eqnarray}
\tau_{ij}=\prod_{r=1}^k (k(i-1)+r , k(j-1)+r),
\end{eqnarray}
where $(k(i-1)+r , k(j-1)+r)$ denotes the transposition that interchanges the positive integers $k(i-1)+r$ and $k(j-1)+r$. The permutation $\tau_{ij}$ interchanges the elements of $\Gamma_i$ and $\Gamma_j$ monotone increasingly, and it acts as identity elsewhere. The subgroup of $S_{nk}$ generated by the permutations $\tau_{ij}$ is denoted by $\mc{S}_n$. The association sending the transposition $(ij)\in S_n$ to the permutation $\tau_{ij}\in \mc{S}_n$ defines an isomorphism between $S_n$ and $\mc{S}_n$, which justifies the notation.
Secondly, we define the group $\mc{Y}_n$ as the Young subgroup
$S_{\Gamma_1}\times \dots \times S_{\Gamma_n}$ of $S_{kn}$. The group $\mc{Y}_n$ is a subgroup of $H_n$. It can be shown easily that $\mc{S}_n$ normalizes $\mc{Y}_n$ and $H_n=\mc{Y}_n\rtimes \mc{S}_n$. The internal semi-direct product decomposition implies that every element $h\in H_n$ can be written as a product
\[
h=h_{\mc{Y}}h_{\mc{S}},
\] where $h_{\mc{Y}}\in \mc{Y}_n$
and $h_{\mc{S}}\in \mc{S}_n$. In general, let $A$ be a subset of positive integers $\N_+$ which is equal to union of $\Gamma_j$, $j\in J$, for some subset $J \subset \N$. Discussing as above, one can show that there is an isomorphism $H_A\simeq H_{|J|}=S_k\wr S_{J}$. If $J$ is finite then the cardianlity of $H_A$ is equal to $|J|!(k!)^{|J|}$.
\end{Remark}

\begin{Example}\label{example of elements}
Let $k=3$ and $n=7$. The permutation $g=(1,8,18,21,6,10,13,2,11,3,12)(4,16,19)\\(5,17,20)$ is an element of $S_{21}$. The positive integers $1$ and $2$ are contained in $\Gamma_1=\{1,2,3\}$ and thus $p(1)=p(2)=1$. On the other hand $p(g(1))=p(8)=3$ as $8\in \Gamma_3=\{7,8,9\}$, and $p(g(2))=p(11)=4$. It follows that $g\notin H_7$.
\end{Example}

\begin{Lemma}
The group $H_{n}$ (resp. $H_{\infty}$) is isomorphic to the wreath product $S_k\wr S_{n}$ (resp. $S_k\wr S_{\infty}$) for every $n\in \N_+$. The cardinality of $H_{n}$ is equal to $n!(k!)^n$.
\end{Lemma}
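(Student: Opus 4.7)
The plan is to invoke the internal semi-direct product decomposition $H_n = \mathcal{Y}_n \rtimes \mathcal{S}_n$ stated in Remark \ref{tau-ij remark} and recognize it directly as a wreath product. First I would identify $\mathcal{Y}_n = S_{\Gamma_1} \times \cdots \times S_{\Gamma_n}$ with $S_k^n$ via the obvious componentwise isomorphism $S_{\Gamma_i} \xrightarrow{\sim} S_k$ coming from the order-preserving bijection $k(i-1)+r \mapsto r$ between $\Gamma_i$ and $[k]$. Similarly, I would use the isomorphism $S_n \xrightarrow{\sim} \mathcal{S}_n$ sending $(ij)$ to $\tau_{ij}$ that was already noted in the Remark.

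Next I would check that, under these identifications, the conjugation action of $\mathcal{S}_n$ on $\mathcal{Y}_n$ coincides with the standard action of $S_n$ on $S_k^n$ by permutation of coordinates. Because the $\tau_{ij}$ generate $\mathcal{S}_n$, it suffices to verify this for a single generator: a direct calculation from the definition $\tau_{ij} = \prod_r (k(i-1)+r,\, k(j-1)+r)$ shows that conjugation by $\tau_{ij}$ carries $S_{\Gamma_i}$ to $S_{\Gamma_j}$ by exactly the chosen identification with $S_k$ (and fixes $S_{\Gamma_\ell}$ for $\ell \neq i,j$ pointwise). This is precisely the transposition of the $i$th and $j$th factors, which matches the wreath-product action.

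Combining the two steps yields $H_n \cong S_k^n \rtimes S_n = S_k \wr S_n$, and the cardinality follows from the semi-direct product: $|H_n| = |\mathcal{Y}_n|\cdot|\mathcal{S}_n| = (k!)^n \cdot n!$. For the infinite case I would pass to the direct limit along the inclusions $H_n \hookrightarrow H_{n+1}$ induced by $[kn]\hookrightarrow[k(n+1)]$; since these inclusions are compatible with the identifications above, the colimit gives $H_\infty \cong S_k \wr S_\infty$. The only genuine step requiring care is the coordinate-permutation claim in the second paragraph, but this reduces to a one-line computation with the explicit formula for $\tau_{ij}$, so there is no real obstacle.
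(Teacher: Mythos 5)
Your proposal is correct and follows exactly the route the paper takes: the paper's proof is the one-line ``Directly follows from the previous Remark,'' i.e.\ it relies on the decomposition $H_n=\mc{Y}_n\rtimes \mc{S}_n$ from Remark \ref{tau-ij remark}, and you have simply filled in the details (the identification of $\mc{Y}_n$ with $S_k^n$, the verification that conjugation by $\tau_{ij}$ permutes the factors, the cardinality count, and the direct limit for $H_\infty$) that the paper leaves implicit.
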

\begin{proof}
Directly follows from the previous Remark.
\end{proof}

\begin{Definition}
Let $g$ be an arbitrary permutation in $S_{kn}$. The \textit{ordinary $H_{\infty}$-support} $[g]^o_{H}$ and the \textit{(completed) $H_{\infty}$-support} $[g]_{H}$ of $[g]$ are defined as follows:
\[
\text{$[g]^o_{H}:=\{\Gamma_i|g(\Gamma_i)\neq \Gamma_j,\, \forall j\in \N\}, \quad $ and
$\quad [g]_{H}:=\bigcup_{\Gamma_i\in [g]^o_{H}}\Gamma_i.$}
\]
\end{Definition}

The $H$-support $[g]_{H}$ is a finite union $\Gamma$-parts. As a result the group $H_{[g]_{H}}$ makes sense, cf. Eq. \ref{generalized H-A}. Note that the permutation $g$ may act on $([g]_H)^c$ non-trivially.
\begin{Example}
We reconsider the permutation $g=g=(1,8,18,21,6,10,13,2,11,3,12)(4,16,19)(5,17,20)$ in $S_{21}$ defined in Example \ref{example of elements}. The $H$-support of $g$ is
\[
[g]_H=\{\Gamma_1,\Gamma_2,\Gamma_3,\Gamma_4,\Gamma_5\},
\] while $g(\Gamma_6)=\Gamma_7$ and $g(\Gamma_7)=\Gamma_2$. Observe that there is no inclusion relation between the support $\N(g)$ and $H$-support $[g]_H$ of $g$.
\end{Example}

%%%%%%%%%%%%%%%%%%%%%%%%%%%%%%%%%%%%%%%%%%%%%
%%%%%%%%%%%%%%%%%%%%%%%%%%%%%%%%%%%%%%%%%%%%%
%%%%%%%%%%%%%%%%%%%%%%%%%%%%%%%%%%%%%%%%%%%%%
%%%%%%%%%%%%SECTION 3.1 %%%%%%%%%%%% %%%%%%% %%%%%%%% %%%%%%%%%%%%%%%%%%%%%%%%%%%%%%%%%%%%%%%%%%%%%%%%%%%%%%%%%%%%%%%%%%%%%%%%%%%%%%%%%%%%%%%%%%%%%%%%

\section{Hecke algebra of a pair $(G,H)$}

In this section we review the basics of Hecke algebras of attached to finite groups.

Let $G$ be a finite group and $H$ be a subgroup of $G$. Denote the set of $H$-double cosets in $G$ by $\mc{D}$. Then the Hecke algebra $\mc{H}=\mc{H}(G,H)$ of the pair $(G,H)$ is defined to be the $\C$-sub-algebra of $\C[G]$ generated by the elements
\begin{equation}\label{generators of the hecke algebra definition}
S_{\lambda}=\frac{1}{|H|}\sum_{g\in \lambda}g,
\end{equation}
where $\lambda$ runs over the set $\mc{D}_n$ of the $H_n$-double cosets. It turns out that the set $\{S_{\lambda}\mid \lambda\in \mc{D}\}$ is in fact a basis for $\mc{H}$. This means, if  $\lambda$ and $\mu$ are fixed double cosets then there exist a unique complex number $c^{\eta}_{\lambda,\mu}$ for each double coset $\eta$ in $\mc{D}$ such that
\begin{equation}\label{structure constants}
S_{\lambda}\cdot S_{\mu}=\sum_{\eta\in\mathcal{D}}c_{\lambda,\mu}^{\eta}S_{\eta}.
\end{equation}
The constants $c_{\lambda,\mu}^{\eta}$ are called \textit{structure constants} and they determine the multiplicative structure on $\mathcal{H}$ uniquely.

%%%%%%%%%%%%%%%%%%%%%%%%%%%%%%%%%%%%%%%%%%%%%
%%%%%%%%%%%%%%%%%%%%%%%%%%%%%%%%%%%%%%%%%%%%%
%%%%%%%%%%%%%%%%%%%%%%%%%%%%%%%%%%%%%%%%%%%%%
%%%%%%%%%%%%SECTION 3.1 %%%%%%%%%%%% %%%%%%% %%%%%%%% %%%%%%%%%%%%%%%%%%%%%%%%%%%%%%%%%%%%%%%%%%%%%%%%%%%%%%%%%%%%%%%%%%%%%%%%%%%%%%%%%%%%%%%%%%%%%%%%

\subsection{Reverted action and structure constants in terms of centralizers}

In this subsection we show that the calculation of the structure constants can be interpreted in terms of $H$-centralizers of certain elements. We start with fixing three $H$-double cosets $\lambda,\mu,\eta$ in $G$. If $A$ is a subset of$\eta$, then \textit{fiber} of the cartesian product $\lambda\times \mu$ in $A$ is defined by setting
\begin{equation}\label{fiber set definition}
V(\lambda\times\mu:A):=\{(g_1,g_2)\in
\lambda\times\mu\mid g_1g_2\in A\}.
\end{equation}
The next lemma reduces the study of the structure constants to the study of the fibers of the form $V(\lambda\times \mu:g^H)$, where $g^H$ denotes the $H$-conjugacy class of $g$.
\begin{Lemma}\label{first calculation of the structure constants}
If $g$ is an element of the dobule coset $\eta$, then
\begin{eqnarray}
c_{\lambda,\mu}^{\eta} & = &\frac{|V(\lambda\times \mu:g|)}{|H|}=\frac{|V(\lambda\times \mu:g^H)|}{|H||g^H|}=\frac{|V(\lambda\times \mu:g^H)||C_H(g)|}{|H|^2},
\end{eqnarray}
where $C_H(g)$ denotes the centralizer of $g$ in $H$.
\end{Lemma}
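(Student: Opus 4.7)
The plan is to expand both sides of equation \eqref{structure constants} in the group algebra $\C[G]$ and compare coefficients at a fixed element $x\in\eta$. By definition \eqref{generators of the hecke algebra definition},
\[
S_\lambda\cdot S_\mu \;=\; \frac{1}{|H|^2}\sum_{(g_1,g_2)\in\lambda\times\mu} g_1 g_2,
\]
so the coefficient of $x$ on the left-hand side is $|V(\lambda\times\mu:\{x\})|/|H|^2$. On the right-hand side, $S_\eta$ assigns the coefficient $1/|H|$ to each element of $\eta$, while the remaining $S_{\eta'}$ are supported on disjoint double cosets, so the coefficient of $x$ equals $c_{\lambda,\mu}^\eta/|H|$. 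Equating the two yields the first identity $c_{\lambda,\mu}^\eta=|V(\lambda\times\mu:\{g\})|/|H|$ for any $g\in\eta$.

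For the second identity, I would use that $V(\lambda\times\mu:-)$ is additive in its second argument, giving
\[
|V(\lambda\times\mu:g^H)|=\sum_{x\in g^H}|V(\lambda\times\mu:\{x\})|.
\]
The crucial observation is that each summand is constant along $H$-conjugacy classes: for any $h\in H$, the simultaneous conjugation $(g_1,g_2)\mapsto(hg_1 h^{-1},hg_2 h^{-1})$ is a bijection from $V(\lambda\times\mu:\{x\})$ onto $V(\lambda\times\mu:\{hxh^{-1}\})$, which makes sense because $h\lambda h^{-1}=\lambda$ and $h\mu h^{-1}=\mu$ (since $hH=H=Hh^{-1}$ for $h\in H$). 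Hence $|V(\lambda\times\mu:g^H)|=|g^H|\cdot|V(\lambda\times\mu:\{g\})|$, and combining this with the first identity gives the second.

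The third identity is immediate from the orbit-stabilizer theorem applied to the $H$-conjugation action on $\eta$: $|g^H|=|H|/|C_H(g)|$. I do not anticipate any genuine obstacle in this proof; the step worth stating carefully is the stability of $H$-double cosets under $H$-conjugation, which is precisely what permits the reduction from a single-fiber count to a conjugacy-class-fiber count and makes the centralizer interpretation possible.
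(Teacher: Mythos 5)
Your proof is correct and follows essentially the same route as the paper: coefficient comparison in $\C[G]$ for the first equality, additivity of the fiber set over a disjoint union for the second, and orbit--stabilizer for the third. The one point where you are more explicit than the paper is the constancy of $|V(\lambda\times\mu:\{x\})|$ along $g^H$, which you justify by the simultaneous-conjugation bijection; the paper instead gets this for free from the fact that the first equality already holds for every element of $\eta$ and $g^H\subseteq\eta$ --- both justifications are valid.
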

\begin{proof}
The first equality directly follows from the definition of the structure constants and the fact that $|V(\lambda\times \mu:z)|$ is equal to the number of times $z$ occurs in the product $(\sum_{g_1\in \lambda}g_1)(\sum_{g_2\in \lambda}g_2)$. The second equality is a consequence of the identity
\[
V(\lambda\times\mu:A\cup B)=V(\lambda\times\mu:B)\cup V(\lambda\times\mu: B),
\]
which holds for disjoint subsets $A,B$ of $\eta$. The final equation is consequence of the formula
\begin{equation}\label{size of the conjugacy class}
|g^H|=\frac{|H|}{|C_H(g)|}.
\end{equation}
\end{proof}

We now consider the reverted action of $H\times H$ on the group $G\times G$, which is defined by the rule
\[
(h_1,h_2)\cdot_{H\times H} (g_1,g_2)=(h_1g_1h_2^{-1},h_2g_2h_1^{-1}),
\]
where $h_1,h_2\in H$ and $g_1,g_2\in G$.
The fiber set $V=V(\lambda\times\mu:g^H)$ is closed under ${H\times H}$ action, where $g$ is an arbitrary element in the double coset $\eta$. It is equal to disjoint union of ${H\times H}$ orbits, say equal to the union of the orbits $\Lambda_1,\cdots,\Lambda_r$. Let  $(g_{1i},g_{2i})$ be the representative of the orbit $L_i$, for $i=1,\dots, r$. Using the orbit formula for group actions, we deduce
\begin{equation}\label{size of the orbits}
|\Lambda_i|=|H\times H \cdot (g_{1i},g_{2i})|=\frac{|H\times H|}{Stab_{H\times H}(g_{1i},g_{2i})}.
\end{equation}
Combining last the equality of Lemma \ref{first calculation of the structure constants} with Eq.\eqref{size of the orbits} and bearing in mind the fact that the union of $\Lambda_i$, $i=1,\dots,r$, is equal to the fiber set $V$ yield the following formula for the structure constants:
\begin{equation} \label{structure constants first}
c_{\lambda,\mu}^{\eta}=\sum_{i=1}^r\frac{|\Lambda_i||C_H(g_{1i}g_{2i})|}{|H|^2}=  
\sum_{i=1}^r \frac{|C_H(g_{1i}g_{2i})|}{{|Stab_{H\times H}(g_{1i},g_{2i}})|}.
\end{equation}

\begin{Remark}\label{stabilizer parts determine each other}
A pair $(h_1,h_2)\in H\times H$ is contained in the stabilizer of a pair $(g_1,g_2)$ if and only if $h_1g_1h_2^{-1}=g_1$ and $h_2g_2h_1^{-1}=g_2$, which is equivalent to $h_2=g_1^{-1}h_1g_1$ and $h_1=g_2^{-1}h_2g_2$. In particular, given $(g_1,g_2)$, the elements $h_1$ and $h_2$ determine each other uniquely  whenever $(h_1,h_2)$ stabilizes $(g_1,g_2)$.
\end{Remark}
\begin{Lemma}\label{size of the stabilizer in terms of an intersection}
The projection map $\pi:(h_1,h_2)\longmapsto h_1$ defines a bijection between $Stab_{H\times H}(g_1,g_2)$ and the intersection $C_H(g_1g_2)\cap g_1Hg_1^{-1}$.
\end{Lemma}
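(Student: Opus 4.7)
The plan is to read the lemma directly off the preceding remark, which already does most of the work by pinning down $h_2$ from $h_1$. My approach is: (i) use the remark to get injectivity of $\pi$ for free, (ii) translate the two stabilizer equations into membership conditions on $h_1$, and (iii) verify that those conditions are exactly $h_1\in C_H(g_1g_2)\cap g_1Hg_1^{-1}$.

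For injectivity, I would simply note that if $(h_1,h_2)\in\mathrm{Stab}_{H\times H}(g_1,g_2)$ then the remark gives $h_2=g_1^{-1}h_1g_1$, so $h_1$ determines $h_2$ uniquely and $\pi$ is injective. For surjectivity I would start from an arbitrary $h_1\in H$ and ask when the forced choice $h_2:=g_1^{-1}h_1g_1$ produces a valid element of $\mathrm{Stab}_{H\times H}(g_1,g_2)$. Two conditions must hold. First, $h_2$ has to lie in $H$; since $h_2=g_1^{-1}h_1g_1$, this is the same as $h_1\in g_1Hg_1^{-1}$. Second, the other stabilizer equation $h_1=g_2^{-1}h_2g_2$ must hold, and substituting $h_2=g_1^{-1}h_1g_1$ rewrites this as
\[
h_1 \;=\; g_2^{-1}g_1^{-1}h_1g_1g_2 \;=\; (g_1g_2)^{-1}h_1(g_1g_2),
\]
which is exactly $h_1\in C_H(g_1g_2)$. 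So the image of $\pi$ sits inside $C_H(g_1g_2)\cap g_1Hg_1^{-1}$.

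It remains to go the other way: given $h_1\in C_H(g_1g_2)\cap g_1Hg_1^{-1}$, define $h_2:=g_1^{-1}h_1g_1$, which lies in $H$ by the second factor of the intersection, and check $(h_1,h_2)\in\mathrm{Stab}_{H\times H}(g_1,g_2)$. The equation $h_1g_1h_2^{-1}=g_1$ is automatic from the definition of $h_2$, while $h_2g_2h_1^{-1}=g_1^{-1}h_1(g_1g_2)h_1^{-1}=g_1^{-1}(g_1g_2)=g_2$, using the commutation $h_1(g_1g_2)=(g_1g_2)h_1$ supplied by the first factor of the intersection. There is no real obstacle here; the only thing to be careful about is not to conflate "$h_2$ is a well-defined group element" (always true) with "$h_2\in H$" (which is precisely what $h_1\in g_1Hg_1^{-1}$ encodes). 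Once that distinction is made, the two stabilizer equations line up term for term with the two factors of the intersection, and the lemma follows.
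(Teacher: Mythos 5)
Your proof is correct and follows essentially the same route as the paper: injectivity from the preceding remark, the two stabilizer equations yielding $h_1\in g_1Hg_1^{-1}$ and $h_1\in C_H(g_1g_2)$, and surjectivity by taking the forced $h_2=g_1^{-1}h_1g_1$ and verifying both equations. The only cosmetic difference is that you derive $h_1\in C_H(g_1g_2)$ by rewriting the second stabilizer equation as a conjugation, whereas the paper multiplies the two equations together; the content is identical.
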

\begin{proof}
Injectivity of $\pi$ is a consequence of Remark \ref{stabilizer parts determine each other}. We prove that $\pi$ is a surjection onto the intersection $C_H(g_1g_2)\cap g_1Hg_1^{-1}$. To show that $\pi$ sends elements into $C_H(g_1g_2)\cap g_1Hg_1^{-1}$, we pick an arbitrary element $(h_1,h_2)$ from the stabilizer of the pair $(g_1,g_2)$. From the previous remark we get $h_1=g_1h_2g_1^{-1}$ and hence $h_1\in g_1Hg_1^{-1}$. Combining the equalities $h_1g_1h_2^{-1}=g_1$ and $h_2g_2h_1^{-1}=g_2$ yields
\[
h_1g_1g_2h_1^{-1}=h_1g_1h_2^{-1}h_2g_2h_1^{-1}=g_1g_2,
\]
which proves that $h_1$ is an element of the centralizer $C_H(g_1g_2)$. Thus $h_1$ is an element of the intersection $C_H(g_1g_2)\cap g_1Hg_1^{-1}$. Conversely, assume that $h_1$ is an element of the intersection $C_H(g_1g_2)\cap g_1Hg_1^{-1}$. Since conjugation is an automorphism, there exists a unique element $h_2\in H$ such that $h_1=g_1h_2g_1^{-1}$. We claim that $(h_1,h_2)$ stabilizes $Stab_{H\times H}(g_1,g_2)$. The equality $h_1=g_1h_2g_1^{-1}$ can be written as $h_1g_1h_2^{-1}=g_1$. So we just need to show that $h_2g_2h_1^{-1}=g_2$. Substituting $h_2=g_1^{-1}h_1g_1$ and using the fact that $h_1$ commutes with $g_1g_2$ we deduce
\[
h_2g_2h_1^{-1}=(g_1^{-1}h_1g_1)g_2h_1^{-1}=g_1^{-1}g_1g_2=g_2.
\] This finishes the proof of the surjectivity.
\end{proof}

\begin{Corollary}\label{structure functions calculation as fractions}
The structure constant $c_{\lambda,\mu}^{\eta}$ is a non-negative integer and given by the formula
\begin{eqnarray}
c_{\lambda,\mu}^{\eta} & = & \sum_{i=1}^r\frac{|C_H(g_{1i}g_{2i})|}{|C_H(g_{1i}g_{2i})\cap g_{1i}Hg_{1i}^{-1}|},\nonumber
\end{eqnarray}
where the elements $g_{1i},g_{2i}$ are as above.
\end{Corollary}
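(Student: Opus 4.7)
The plan is to combine Lemma \ref{size of the stabilizer in terms of an intersection} with the formula for $c_{\lambda,\mu}^{\eta}$ established in Eq.\eqref{structure constants first}, and then to read off non-negativity and integrality from the resulting expression.

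First I would write down the starting point, which is the identity
\[
c_{\lambda,\mu}^{\eta}=\sum_{i=1}^{r}\frac{|C_H(g_{1i}g_{2i})|}{|Stab_{H\times H}(g_{1i},g_{2i})|}
\]
from Eq.\eqref{structure constants first}. The projection map $\pi\colon (h_1,h_2)\mapsto h_1$ in Lemma \ref{size of the stabilizer in terms of an intersection} is a \emph{bijection}, hence cardinality-preserving, so
\[
|Stab_{H\times H}(g_{1i},g_{2i})|=|C_H(g_{1i}g_{2i})\cap g_{1i}Hg_{1i}^{-1}|
\]
for every $i$. Substituting this into the sum yields the displayed formula of the corollary immediately; this is the whole content of the formula.

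For the non-negativity and integrality assertions, my plan is to observe that each summand is the index of a subgroup. Indeed, $C_H(g_{1i}g_{2i})$ and $g_{1i}Hg_{1i}^{-1}$ are both subgroups of $G$, so their intersection $C_H(g_{1i}g_{2i})\cap g_{1i}Hg_{1i}^{-1}$ is a subgroup of $C_H(g_{1i}g_{2i})$. By Lagrange's theorem, the fraction
\[
\frac{|C_H(g_{1i}g_{2i})|}{|C_H(g_{1i}g_{2i})\cap g_{1i}Hg_{1i}^{-1}|} = [C_H(g_{1i}g_{2i}) : C_H(g_{1i}g_{2i})\cap g_{1i}Hg_{1i}^{-1}]
\]
is a positive integer for each $i$. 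Hence the sum over $i$ is a non-negative integer, as claimed.

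There is no real obstacle here: the corollary is essentially a repackaging of the previous lemma. The only point worth a second glance is the subgroup containment used in the last step, which is just the trivial observation that $C_H(g_{1i}g_{2i})\cap g_{1i}Hg_{1i}^{-1}$ is contained in $C_H(g_{1i}g_{2i})$ as a subgroup; no further structure on the conjugate $g_{1i}Hg_{1i}^{-1}$ inside $H$ is required.
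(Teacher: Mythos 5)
Your proposal is correct and follows the same route as the paper: the paper's proof is the one-line instruction to combine Lemma \ref{size of the stabilizer in terms of an intersection} with Eq.\eqref{structure constants first}, which is exactly the substitution you carry out. The Lagrange's-theorem observation for integrality is a sensible explicit addition that the paper leaves implicit.
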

\begin{proof}
Use Lemma \ref{size of the stabilizer in terms of an intersection} and Eq. \eqref{structure constants first}.
\end{proof}

%%%%%%%%%%%%%%%%%%%%%%%%%%%%%%%%%%%%%%%%%%%%%
%%%%%%%%%%%%%%%%%%%%%%%%%%%%%%%%%%%%%%%%%%%%%
%%%%%%%%%%%%%%%%%%%%%%%%%%%%%%%%%%%%%%%%%%%%%
%%%%%%%%FAMILY OF HECKE ALGEBRAS %%%%%%%%%%%% %%%%%%% %%%%%%%% %%%%%%%%%%%%%%%%%%%%%%%%%%%%%%%%%%%%%%%%%%%%%%%%%%%%%%%%%%%%%%%%%%%%%%%%%%%%%%%%%%%%%%%%%%%%%%%%

\subsection{Family of Hecke algebras}

In this subsection we fix two ascending chains $(G_n)_{n\in \N_+}$ and $(H_n)_{n\in \N_+}$  of finite groups such that $H_n\subset G_n$, for every positive integer $n$. The set of $H_n$-double cosets in $G_n$ is denoted by $\mc{D}_n$  The union of the groups $G_n$ (resp. $H_n$) will be denoted by $G_{\infty}$ (resp. $H_{\infty}$). We view the group $H_{\infty}$ as a subgroup of $G_{\infty}$ and denote the $H_{\infty}$-double cosets in $G_{\infty}$ is denoted by $\mc{D}_{\infty}$.

\begin{Definition}
The family $(G_n,H_n)_{n\in \N_+}$ is called \textit{saturated}, if for all positive integers $m\leq n$ and for all $H_n$-double coset $\lambda_n$, the intersection $\lambda_n\cap G_m$
is either empty or equal to a single $H_m$-double coset.
\end{Definition}

Now we consider a saturated family $(G_n,H_n)_{n\in \N_+}$ and fix two positive integers $m\leq n$. We claim that the map
\[
\varphi^n_{m}:H_mgH_m\longmapsto H_ngH_n
\] is an injection from $\mc{D}_m$ into $\mc{D}_n$. Indeed, assume that $H_ngH_n=H_ng'H_n$ for some $g,g'\in G_m$. Since the family $(G_n,H_n)_{n\in \N_+}$ is saturated, the non-empty intersection
\[
(H_ngH_n)\cap G_m=(H_ng'H_n)\cap G_m
\]
is equal to a single $H_m$-double coset. But the intersection contains both $g$ and $g'$, which means that $H_m$-double cosets of $g$ and $g'$ are equal. For a saturated
family, we identify $\mc{D}_m$ with its image in $\mc{D}_{n}$ whenever $m\leq n$. The set $\mathcal{D}$ of $H_{\infty}$-double cosets
can be identified with the union of $\mc{D}_n$. If $\lambda\in \mc{D}$
set $\lambda(n):=\lambda\cap G_n$. As a result of being
saturated, $\lambda(n)$ is either empty or an $H_n$-double coset in $G_n$. Using this observation one can write
$\mathcal{D}_n=\{\lambda(n)\mid\lambda\in \mc{D},\lambda(n)\neq \emptyset\}$. If
$\lambda(n)\neq \emptyset$ set
\begin{equation}
S_{\lambda}(n)=\frac{1}{|H_n|}\sum_{g\in \lambda(n)}g.
\end{equation} The set $\{S_{\lambda}(n)\mid \lambda\in\mc{D},\lambda(n)\neq \emptyset\}$ is a basis of the algebra $\mc{H}(G_n,H_n)$ and thus there exists non-negative integers $c_{\lambda,\mu}^{\eta}(n)$ such that
\[
S_{\lambda}(n)\cdot S_{\mu}(n)=\sum_{\eta\in\mathcal{D}}c_{\lambda,\mu}^{\eta}(n)S_{\eta}(n).
\]
The functions $c_{\lambda,\mu}^{\eta}(n)$ are called the \textit{{structural
coefficients}} of the family $(G_n,H_n)$.

\begin{Definition}
A saturated family $(G_n,H_n)_{n\in \N_+}$ is called \textit{{admissible}} if
for some $g\in\eta$, the fiber set
\[
V(\lambda\times\mu:g^{H_{\infty}})=\{(g_1,g_2)\in \lambda\times\mu:g_1g_2\in g^{H_{\infty}} \}
\]admits finitely many ${H_{\infty}}\times {H_{\infty}}$ orbits.    
\end{Definition}

\begin{Corollary}\label{abstract formula for structure functions}
Let $(G_n,H_n)_{n\in \N}$ be an admissible family and $\lambda$, $\mu$, and $\eta$ be three $H_{\infty}$-double coset. Let
$m$ be the minimal positive integer such that the intersections $\lambda(m)$, $\mu(m)$, and $\eta(m)$ are all non-empty. There exist elements  $g_{11},\dots,g_{1r}$ in $\lambda(m)$, and $g_{21},\dots,g_{2r}$ in $\mu(m)$ such that
and  \begin{eqnarray}
c_{\lambda,\mu}^{\eta}(n) & = & \sum_{i=1}^r\frac{|C_{H_n}(g_{1i}g_{2i}|}{|C_{H_n}(g_{1i}g_{2i})\cap g_{1i}H_{n}g_{1i}^{-1}|}.\nonumber
\end{eqnarray}
\end{Corollary}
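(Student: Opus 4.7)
The plan is to derive this corollary from Corollary~3.5 by exhibiting a single system of fiber representatives, chosen at the minimal level $m$, that serves uniformly for all $n\geq m$.

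First I would fix an element $g\in\eta(m)$, which exists by the definition of $m$ and automatically lies in $\eta(n)$ for every $n\geq m$. Applying Corollary~3.5 to the finite pair $(G_n,H_n)$ with this distinguished $g$ yields
\begin{equation}
c_{\lambda,\mu}^{\eta}(n) = \sum_{j} \frac{|C_{H_n}(u_j v_j)|}{|C_{H_n}(u_j v_j) \cap u_j H_n u_j^{-1}|},
\end{equation}
where $(u_j,v_j)$ ranges over an arbitrarily chosen set of $H_n\times H_n$-orbit representatives of $V(\lambda(n)\times\mu(n):g^{H_n})$ under the reverted action. The task therefore becomes to make this choice of representatives uniform in $n$.

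By admissibility, $V(\lambda\times\mu:g^{H_\infty})$ decomposes into finitely many $H_\infty\times H_\infty$-orbits $\Omega_1,\dots,\Omega_r$. I would next show that each $\Omega_i$ has a representative $(g_{1i},g_{2i})\in\lambda(m)\times\mu(m)$: starting from any $(a_1,a_2)\in\Omega_i$, one acts by a suitable element of $H_\infty\times H_\infty$ to place the first coordinate in $\lambda(m)$, and the constraint that $a_1 a_2$ lies in $g^{H_\infty}$ together with the saturation hypothesis applied to each coordinate forces the second coordinate into $\mu(m)$.

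It then remains to verify that for every $n\geq m$ the same pairs $\{(g_{1i},g_{2i})\}_{i=1}^r$ form a complete, irredundant system of $H_n\times H_n$-orbit representatives of $V(\lambda(n)\times\mu(n):g^{H_n})$. Irredundancy is immediate from $H_n\subseteq H_\infty$. Exhaustiveness requires, for an arbitrary $(u,v)$ in the finite-level fiber, descending an $H_\infty$-intertwiner to $(g_{1i},g_{2i})$ down to an intertwiner inside $H_n\times H_n$. This descent, controlled by saturation together with the stabilizer description in Lemma~3.4, is in my view the main technical obstacle; once it is in place, substituting the common representatives into the displayed formula gives the stated expression for $c_{\lambda,\mu}^{\eta}(n)$.
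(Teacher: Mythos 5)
Your overall strategy is the one the paper intends: its own proof of this corollary is a single sentence deferring to ``the previous discussion and Corollary~\ref{structure functions calculation as fractions}'', and your plan --- fix $g\in\eta(m)$, take the finitely many $H_\infty\times H_\infty$-orbits supplied by admissibility, choose representatives once and for all, and substitute them into the level-$n$ formula --- is exactly what that sentence gestures at. But as written your proposal is not a proof: both substantive steps are left open, and one of them is sketched with an argument that does not work.

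First, the claim that each orbit $\Omega_i$ has a representative in $\lambda(m)\times\mu(m)$. The reverted action sends $(a_1,a_2)$ to $(h_1a_1h_2^{-1},\,h_2a_2h_1^{-1})$, so the \emph{same} pair $(h_1,h_2)$ moves both coordinates; choosing it to put the first coordinate into $G_m$ gives no control over $h_2a_2h_1^{-1}$, and saturation --- a statement about intersections of double cosets with $G_m$ --- cannot ``force'' the second coordinate into $G_m$. You would first need to know that the product $h_1a_1a_2h_1^{-1}$ lands in $G_m$, which is a statement about supports, not about double cosets. For the concrete family this is precisely what the proof of Proposition~\ref{admissiblility of the family} labors to arrange via minimal representatives, and even there the representatives are only placed in some $S_{ku}$ with $u$ bounded by the sizes of $[g_1]_H$ and $[g_1g_2]_H$, not at the minimal level $m$. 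Second, the exhaustiveness step --- that the chosen pairs remain a \emph{complete} set of $H_n\times H_n$-orbit representatives of $V(\lambda(n)\times\mu(n):g^{H_n})$ for every $n\geq m$ --- is the actual content of the corollary, and you explicitly defer it as ``the main technical obstacle''. Note also a mismatch you would have to resolve inside that step: Corollary~\ref{structure functions calculation as fractions} at level $n$ sums over orbits of the fiber above the $H_n$-class $g^{H_n}$, whereas admissibility controls the fiber above $g^{H_\infty}$, and nothing in the axioms of a saturated admissible family identifies $g^{H_\infty}\cap G_n$ with $g^{H_n}$. So the route is the right one, but the two claims that make it work are asserted rather than proved.
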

\begin{proof}
Follows from the previous discussion and Corollary \ref{structure functions calculation as fractions}.
\end{proof}

%%%%%%%%%%%%%%%DOUBLE%%%%%%%%%%%%%%%%%%%%%%%%%%%%%%
%%%%%%%%%%%%%%%COSETS%%%%%%%%%%%%%%%%%%%%%%%%
%%%%%%%%%%%%%%AND%%%%%%%%%%%%%%%%%%%%%%%%
%%%%%%%%POLYNOMIALITY %%%%%%%%%%%% %%%%%%%
%%%%%%%% %%%%%%OF%STRUCTURAL%FUNCTIONS%%%%%%%%%%%%%%%%%%%%%%%%%%%%%%%%%%%%%%%%%%%%%%%%

\section{$H_n$-double cosets and the polynomiality of the structural coefficients}

In this section we prove that the family $(S_{kn},H_n)_{n\in\N_+}$ is admissible and the structural coefficients are polynomials in $n$. We start with a parametrization the $H_n$-double cosets.

%%%%%%%%%%%%%%%%%%%%%%%%%%%%%%%%%%%%%%%%%%%%%%%%%%%%%%%%%%%
%%%%%%%%%%%%%%%%%%%%%%%%%%%%%%%%%%%%%%%%%%%%%%%%%%%%%%%%%%%
%%%%%%%%%%%%$H_n$-double cosets%%%%%%%%%%%%%%%%%%%%%%%%%%%%
%%%%%%%%%%%%%%%%%%%%%%%%%%%%%%%%%%%%%%%%%%%%%%%%%%%%%%%%%%%
%%%%%%%%%%%%%%%%%%%%%%%%%%%%%%%%%%%%%%%%%%%%%%%%%%%%%%%%%%%
%%%%%%%%%%%%%%%%%%%%%%%%%%%%%%%%%%%%%%%%%%%%%%%%%%%%%%%%%%%
%%%%%%%%%%%%%%%%%%%%%%%%%%%%%%%%%%%%%%%%%%%%%%%%%%%%%%%%%%%

\subsection{$H_n$-double cosets}

We first define a red-blue graph to temporarily parameterize the double-cosets.

\begin{Definition} Let $g$ be a permutation in $S_{kn}$. The red-blue graph $\mathbf{G}_g=(\mathbf{V}_g,\mathbf{E}_g)$ is the unique graph satisfying the following: The vertex set $\mathbf{V}_g$ of the graph $\mathbf{G}_g$ consists of the integer pairs of the form $(i,g(i))$, where $i$ runs over the set $\{1,\dots, kn\}$. The edge set $\mathbf{E}_g$ consists of red edges and blue edges. Let $(r,g(r))$ and $(s,g(s))$ be two vertices in $\mathbf{V}_g$. There is a red edge between $(r,g(r))$ and $(s,g(s))$ if and only if $p(r)=p(s)$; there is a blue edge between $(r,g(r))$ and $(s,g(s))$ if and only if $p(g(r))=p(g(s))$. Notice that, when $k=2$, the graph $\mathbf{G}_g$ is same as the graph defined in \cite[Ch.VII, Sc.2]{Mac}.
\end{Definition}

\begin{Lemma}\label{double cosets are equal if graphs are isomorphic}
The isomorphism class of the red-blue graph $\mathbf{G}_g$ determines $H_ngH_n$ completely.
\end{Lemma}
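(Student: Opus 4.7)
The plan is to prove two implications: that equality of $H_n$-double cosets forces isomorphism of red-blue graphs (so that the isomorphism class is well-defined on cosets), and that conversely graph isomorphism forces the cosets to coincide. Together these establish the lemma, with the converse direction carrying the actual content.

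For the first implication, I would suppose $g' = h_1 g h_2$ with $h_1, h_2 \in H_n$ and exhibit an explicit isomorphism
\[
\phi: \mathbf{V}_g \longrightarrow \mathbf{V}_{g'}, \qquad (i,\, g(i)) \longmapsto \bigl(h_2^{-1}(i),\, h_1 g(i)\bigr) = \bigl(h_2^{-1}(i),\, g'(h_2^{-1}(i))\bigr),
\]
where the second equality uses $h_1 g = g' h_2^{-1}$. Preservation of red edges then reduces to the fact that $h_2^{-1} \in H_n$ preserves the partner relation on first coordinates, and preservation of blue edges to the same fact for $h_1$ on second coordinates; both are immediate from the definition of $H_n$ as the stabilizer of $(\Gamma_i)_{i \in \N_+}$.

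For the converse, I would exploit the observation that the vertices of $\mathbf{G}_g$ and $\mathbf{G}_{g'}$ are both canonically indexed by $[kn]$ via the first coordinate, since $g, g' \in S_{kn}$ are permutations. An isomorphism $\phi: \mathbf{G}_g \to \mathbf{G}_{g'}$ therefore determines a unique $\sigma \in S_{kn}$ through the relation $\phi(i,\, g(i)) = (\sigma(i),\, g'(\sigma(i)))$. Red-edge preservation translates into the biconditional $p(r) = p(s) \Longleftrightarrow p(\sigma(r)) = p(\sigma(s))$, which places $\sigma$ in $H_n$. Blue-edge preservation, rewritten via the substitution $a = g(r),\, b = g(s)$, becomes $p(a) = p(b) \Longleftrightarrow p(\tau(a)) = p(\tau(b))$ for $\tau := g' \sigma g^{-1}$, so $\tau$ also lies in $H_n$. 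Rearranging the definition of $\tau$ yields $g' = \tau\, g\, \sigma^{-1} \in H_n g H_n$.

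The only delicate point, rather than a genuine obstacle, will be the bookkeeping: tracking that red edges constrain the right-multiplier ($\sigma$, equivalently $h_2^{-1}$) while blue edges constrain the left-multiplier ($\tau$, equivalently $h_1$), and verifying that $\sigma$ and $\tau$ are honest permutations of $[kn]$ so that their membership in $H_n \subset S_{kn}$ is legitimate. No technical difficulty beyond careful bookkeeping arises.
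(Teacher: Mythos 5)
Your proposal is correct and follows essentially the same route as the paper: the paper likewise reads off two permutations from the coordinates of the vertex bijection (its $h$ and $h'$ are your $\sigma$ and $\tau = g'\sigma g^{-1}$), derives $g'h = h'g$ from the image being a vertex of $\mathbf{G}_{g'}$, and uses red-edge and blue-edge preservation to place $h$ and $h'$ in $H_n$. You are somewhat more explicit on the two points the paper leaves implicit — that the vertex bijection really does induce honest permutations of $[kn]$, and the verification of the "straightforward" converse direction — but there is no substantive difference in method.
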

\begin{proof}
The proof is a direct generalization of \cite[2.1(i), Ch.VII, Sc.2]{Mac}. Let $g,g'$ be two elements of $S_{kn}$ and assume that $\mathbf{G}_g$ and
$\mathbf{G}_{g'}$ are isomorphic. There is a bijection between the vertex sets
\begin{eqnarray}
\mathbf{V}_{g} & \longrightarrow & \mathbf{V}_{g'} \nonumber\\
(r,g(r)) & \longmapsto & (h(r),h'(g(r))\nonumber
\end{eqnarray}
that preserves the red edges and blue
edges. Since the pair $(h(r),h'(g(r))$ is an
edge of $\mathbf{G}_{g'}$, by definition of the vertex set  $\mathbf{V}_{g'}$, it follows that
$g'h(r)=h'g(r)$, for all positive integers $r$ in $[kn]$. Since the red edges
are mapped to red edges, $p(r)=p(s)$
implies $p(h(r))=p(h(s))$, hence
$h\in H_n$. Similarly, blue edges are
permuted among themselves,
$p(g(r))=p(g(s))$ implies
$p(hg(r))=p(hg(s))$. As a consequence the permutation $h'$ is also contained in $H_n$. The equality
$g'h(r)=h'g(r)$ now implies that the elements $g$ and
$g'$ are contained in the same $H_n$-double
coset.

It is straightforward to prove that the elements contained in the same double coset
have isomorphic red-blue graphs.
\end{proof}

\begin{figure}[h]
\begin{tikzpicture}
[scale=.9,auto=left]
\node (n1) at (0,0) {$w_{1,8}$};
\node (n2) at (2,0)  {$w_{2,11}$};
\node (n3) at (1,1)  {$w_{3,12}$};
\node (n7) at (6,0) {$w_{8,18}$};
\node (n8) at (8,0)  {$w_{7,7}$};
\node (n9) at (7,1)  {$w_{9,9}$};
\node (n4) at (3,3) {$w_{4,16}$};
\node (n5) at (5,3)  {$w_{5,17}$};
\node (n6) at (4,2)  {$w_{6,10}$};
\node (n10) at (10,0) {$w_{10,13}$};
\node (n11) at (11,1)  {$w_{11,3}$};
\node (n12) at (12,0)  {$w_{12,1}$};
\node (n13) at (10,4) {$w_{13,2}$};
\node (n14) at (12,4)  {$w_{14,14}$};
\node (n15) at (11,3)  {$w_{15,15}$};

\node (n16) at (14,0) {$w_{16,19}$};
\node (n17) at (15,1)  {$w_{17,20}$};
\node (n18) at (16,0)  {$w_{18,21}$};
\node (n19) at (14,3) {$w_{19,4}$};
\node (n20) at (15,4)  {$w_{20,5}$};
\node (n21) at (16,3)  {$w_{21,6}$};

\foreach \from/\to in {n1/n2,n2/n3,n3/n1, n4/n5,n5/n6,n6/n4, n7/n8,n8/n9,n9/n7, n10/n11,n11/n12,n12/n10, n13/n14,n14/n15,n15/n13,
n16/n17,n17/n18,n18/n16, n19/n20,n20/n21,n21/n19}
\draw[red,line width=1.5pt] (\from) -- (\to);

\draw[blue,line width=1.5pt]  (n1) .. controls (4,-2) .. (n8);
\draw[blue, line width=1.5pt]  (n1) .. controls (2,6) and (6,6) .. (n9);
\draw[blue, line width=1.5pt]  (n8) .. controls (8,1) .. (n9);

\draw[blue, line width=1.5pt]  (n2) .. controls (2,1) .. (n3);
\draw[blue, line width=1.5pt]  (n2) to (n6);
\draw[blue, line width=1.5pt]  (n3) to (n6);

\draw[blue, line width=1.5pt]  (n4) .. controls (4,4) .. (n5);
\draw[blue, line width=1.5pt]  (n4) .. controls (4,5) and (6.5,4) ..  (n7);
\draw[blue, line width=1.5pt]  (n5) to (n7);

\draw[blue, line width=1.5pt]  (n10) to (n15);
\draw[blue, line width=1.5pt]  (n12) .. controls (9,-3) and (9,2) .. (n13);
\draw[blue, line width=1.5pt]  (n12) .. controls (12,1) .. (n11);

\draw[blue, line width=1.5pt]  (n15) .. controls (12,3) .. (n14);
\draw[blue, line width=1.5pt]  (n13) to (n11);
\draw[blue, line width=1.5pt]  (n14) .. controls (13,3) and (14,-3) .. (n10);

\draw[blue,line width=1.5pt]  (n16) .. controls (14,1) .. (n17);
\draw[blue, line width=1.5pt]  (n17) .. controls (16,1) .. (n18);
\draw[blue, line width=1.5pt]  (n18) .. controls (15,-1) .. (n16);

\draw[blue,line width=1.5pt]  (n19) .. controls (14,4) .. (n20);
\draw[blue, line width=1.5pt]  (n20) .. controls (16,4) .. (n21);
\draw[blue, line width=1.5pt]  (n21) .. controls (15,2) .. (n19);

\end{tikzpicture}

\caption{The red-blue graph $\mathbf{G}_g$ when  $g=(1,8,18,21,6,10,13,2,11,3,12)(4,16,19)(5,17,20)$, where $k=3$ and $n=7$.}
\end{figure}
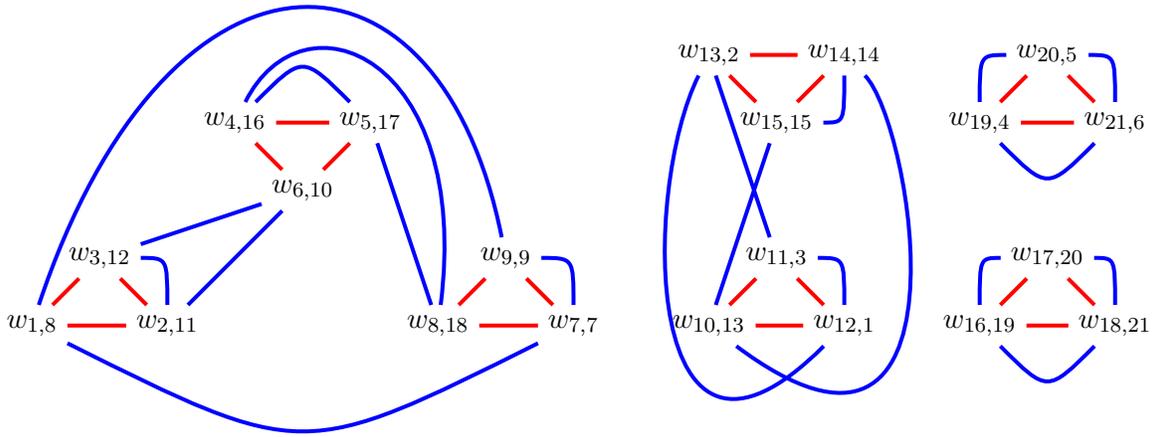

\begin{Remark}
Observe that, the graph obtained from $\mathbf{G}_g$ by erasing the blue edges (resp. red edges) is equal to disjoint union of complete graphs on $k$-vertices.
\end{Remark}

Now we define a second graph, $G_g=(V,E^g)$, whose isomorphism class completely determines the isomorphism class of the red-blue graph $\mathbf{G}_g$. In fact, the graph $G_g$ is obtained from $\mathbf{G}_g$ by collapsing the the $k$-complete red-graphs into a single vertex $v_i$. The graph $G_g$ can be described as follows.

\begin{Definition}\label{graph of an element}
The vertices of the graph ${G_g}=(V,E^g)$ are denoted by $v_1,\cdots,v_n$. For $u=1,\cdots,n$, and $\{r,s\}\subset \Gamma_u$, $r\neq s$, there is an edge $e^g_{r,s}=e^g_{s,r}$:
\[
e^g_{r,s}=e^g_{s,r}=\{\{v_i,r \}\{ v_j,s   \}    \}
\]
where $r\in g(\Gamma_i)$, $s\in g(\Gamma_j)$. Notice that $e^g_{r,s}$ can be written in the following form:
\[
e^g_{r,s}=e^g_{s,r}=\{\{v_{p(g^{-1}(r))},r \},\{ v_{p(g^{-1}(s))},s   \}    \}.
\]
We denote the set of edges $e^g_{r,s}$ where $r,s\in \Gamma_u$ by $E^g_{u}$.
The isomorphism class of $G_g$ is denoted by $\widetilde{G_g}$. The set of isomorphism classes of graphs of the form $G_g$ for $g\in S_{kn}$ is denoted by $\mc{G}_{n,k}$.
\end{Definition}

\begin{Example}
Collapsing the complete $3$-graphs with red edges of the graph presented above into points we obtain the graph $G_g$, where $g=(1,8,18,21,6,10,13,2,11,3,12)(4,16,19)(5,17,20)
$.  

\begin{tikzpicture}
[scale=.9,auto=left]

\node[circle, fill=red!80] (n1) at (0,0) {$v_{1}$};
\node[circle, fill=red!80] (n2) at (3,2)  {$v_{2}$};
\node[circle, fill=red!80] (n3) at (6,0)  {$v_{3}$};
\node[circle, fill=red!80] (n4) at (9,0) {$v_{4}$};
\node[circle, fill=red!80] (n5) at (9,2)  {$v_{5}$};
\node[circle, fill=red!80] (n6) at (13,0) {$v_{6}$};
\node[circle, fill=red!80] (n7) at (13,2)  {$v_{7}$};

\draw[blue,line width=1.5pt] (n1) .. controls (-1,-1) and (0,-1) .. (n1) node[pos=.5,sloped,below left] {$e^{g}_{11,12}$};
\draw[blue,line width=1.5pt] (n2) .. controls (2,3) and (4,4) .. (n2)node[pos=.5,above right] {$e^{g}_{16,17}$};

\draw[blue,line width=1.5pt] (n3) .. controls (6,-1) and (7,-1) .. (n3) node[pos=.5,below right] {$e^{g}_{9,7}$};

\draw[blue,line width=1.5pt] (n1) .. controls (2,2) .. (n2) node[pos=.5,sloped,above left] {$e^{g}_{11,10}$};
\draw[blue,line width=1.5pt] (n1) .. controls (2,1) .. (n2) node[pos=.5,sloped,below right] {$e^{x}_{12,10}$};

\draw[blue,line width=1.5pt] (n2) .. controls (4,2) .. (n3) node[pos=.5,sloped,above right] {$e^{g}_{18,16}$};
\draw[blue,line width=1.5pt] (n2) .. controls (4,1) .. (n3)node[pos=.5,sloped,below left] {$e^{g}_{17,18}$};

\draw[blue,line width=1.5pt] (n1) .. controls (3,0) .. (n3) node[pos=.5,sloped,below] {$e^{g}_{9,8}$};
\draw[blue,line width=1.5pt] (n3) .. controls (3,-1) .. (n1)node[pos=.5,sloped,below] {$e^{9}_{7,8}$};

\draw[blue,line width=1.5pt] (n4) .. controls (9.5,-1) and (8.5,-1) .. (n4)node[pos=.5,sloped,below] {$e^{g}_{3,1}$};
\draw[blue,line width=1.5pt] (n5) .. controls (8,3) and (10,4) .. (n5)node[pos=.5,above right] {$e^{g}_{15,14}$};

\draw[blue,line width=1.5pt] (n4) .. controls (8.5,1) .. (n5)node[pos=.9, sloped,above left  ] {$e^{g}_{13,15}$};
\draw[blue,line width=1.5pt] (n4) .. controls (9.5,1) .. (n5)node[pos=.9, sloped,above right  ] {$e^{g}_{13,14}$};

\draw[blue,line width=1.5pt] (n4) .. controls (6.5,1) .. (n5)node[pos=.9, sloped,above left  ] {$e^{g}_{2,3}$};
\draw[blue,line width=1.5pt] (n4) .. controls (11.5,1) .. (n5)node[pos=.9, sloped,above right  ] {$e^{g}_{1,2}$};

\draw[blue,line width=1.5pt] (n6) .. controls (12.5,-1) and (13.5,-1) .. (n6)node[pos=.5,sloped,below] {$e^{g}_{19,21}$};
\draw[blue,line width=1.5pt] (n6) .. controls (14,-0.5) and (14,0.5) .. (n6)node[pos=.3,right ] {$e^{g}_{21,20}$};
\draw[blue,line width=1.5pt] (n6) .. controls (12,-0.5) and (12,0.5) .. (n6)node[pos=.5,below ] {$e^{g}_{20,19}$};

\draw[blue,line width=1.5pt] (n7) .. controls (12.5,1) and (13.5,1) .. (n7)node[pos=.7,right] {$e^{g}_{4,5}$};
\draw[blue,line width=1.5pt] (n7) .. controls (14,1.5) and (14,2.5) .. (n7)node[pos=.7,above ] {$e^{g}_{5,6}$};
\draw[blue,line width=1.5pt] (n7) .. controls (12,1.5) and (12,2.5) .. (n7)node[pos=.7,above ] {$e^{g}_{6,4}$};

\end{tikzpicture}

\end{Example}

We note that the isomorphism classes of the graph $G_g$ and red-blue graph $\mathbf{G}_g$ are recoverable from each
other uniquely. We also note that the vertices of ${G_g}$ corresponds to the
red-connected components of $\mathbf{G}_g$. Likewise, edges of $G_g$ corresponds to the blue edges of $\mathbf{G}_g$. According to the definition, $v_i$ and $v_j$
are connected by an edge if and only if there exist $r,s$ with $p(r)=p(s)$
such that $g^{-1}(r)\in \Gamma_i$ and $g^{-1}(s)\in \Gamma_j$. From this
observation, one deduces the following:

\begin{Remark}\label{isolated not in the support}
The set $\Gamma_i$ is not element of $[g]^o_{B_n}$ if and only if $\{v_i\}$ is a connected component of $G_g$. Equivalently, the vertex $v_i$ is an isolated vertex of $G_g$ if and only if $g(\Gamma_i)=\Gamma_j$ for some $j\in [n]$; which is tantamount to say that $\Gamma_i$ is contained in the complement $[kn]-[g]_B$.
\end{Remark}

We reformulate Lemma \ref{double cosets are equal if graphs are isomorphic}.

\begin{Corollary}\label{double coset parameterization}
There is a bijection
\begin{eqnarray}
B_n\backslash S_{kn}/B_n & \longrightarrow &  \mc{G}_{n,k} \nonumber\\
B_n g B_n & \longmapsto & \widetilde{G_g}
\end{eqnarray}
for all $n,k> 0$. The graph $G_g$ will be called the \textit{coset type} of $g$.
\end{Corollary}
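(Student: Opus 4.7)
The plan is to derive this Corollary directly from Lemma \ref{double cosets are equal if graphs are isomorphic} by transferring the parametrization from the red-blue graph $\mathbf{G}_g$ to the collapsed graph $G_g$, together with the observation (already noted in the paragraph preceding the statement) that the isomorphism classes of $\mathbf{G}_g$ and $G_g$ determine each other.

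First I would spell out the passage between the two isomorphism classes. Given $\mathbf{G}_g$, the red edges partition its vertex set into $k$-cliques corresponding to the fibres $g^{-1}(\Gamma_u)$; contracting each such red $k$-clique to a single vertex yields exactly $G_g$, with vertex set $\{v_1,\dots,v_n\}$ and with the blue edges of $\mathbf{G}_g$ becoming the multi-edges of $G_g$ naturally bundled by the $\Gamma$-part of their endpoints in the original labeling (this produces the decomposition $E^g = \sqcup_u E^g_u$ in Definition \ref{graph of an element}). Conversely, from $G_g$ together with its bundling into $E^g_1,\dots,E^g_n$, one reconstructs $\mathbf{G}_g$ by blowing up each $v_i$ into a red $k$-clique on the labeled vertex set $\{(r, g(r)) : r \in g^{-1}(\Gamma_i)\}$ and distributing the blue edges from the bundle $E^g_u$ between the labeled vertices in the appropriate red cliques. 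These two operations are mutually inverse, and they evidently transport isomorphisms of red-blue graphs to isomorphisms of collapsed graphs and vice versa, because the red components are an intrinsic feature of $\mathbf{G}_g$.

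Second, I would combine this with Lemma \ref{double cosets are equal if graphs are isomorphic}. That Lemma gives a bijection between $H_n \backslash S_{kn} / H_n$ and the set of isomorphism classes $\widetilde{\mathbf{G}_g}$; composing with the bijection of the previous paragraph yields a well-defined injection $H_n g H_n \longmapsto \widetilde{G_g}$. Surjectivity onto $\mc{G}_{n,k}$ is automatic, since by Definition \ref{graph of an element} the target $\mc{G}_{n,k}$ is precisely the set of isomorphism classes $\widetilde{G_g}$ arising from some $g \in S_{kn}$.

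The only delicate point, and the one that deserves care, is the isomorphism-compatibility of the collapse/blow-up procedure: one must verify that an isomorphism between two $G_g$'s preserving the bundle decomposition $(E^g_u)_{u=1}^n$ lifts to an isomorphism of the red-blue graphs, the ambiguity in the lift being exactly the freedom to permute the $k$ labeled vertices inside each red clique. This freedom corresponds on the permutation side to left-multiplication by elements of the Young subgroup $\mc{Y}_n$, which is already absorbed in the double-coset equivalence, so nothing is lost and the correspondence descends cleanly to $H_n$-double cosets.
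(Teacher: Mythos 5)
Your proposal is correct and follows essentially the same route as the paper: the paper simply states that the isomorphism classes of $G_g$ and $\mathbf{G}_g$ are recoverable from each other (red components become vertices, blue edges become edges) and then declares the Corollary a reformulation of Lemma \ref{double cosets are equal if graphs are isomorphic}. Your write-up merely fills in the collapse/blow-up details and the lifting argument that the paper leaves implicit.
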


% If two permutations are contained in the same $B_n$-double coset then their graphs are isomorphic as red-blue graphs.
%\end{Lemma}
%\begin{proof}
% Let $a\in Y_n$ and $b\in G_n$ and $x\in S_{nk}$. We compare the graphs $G_x$ and $G_{abx}$. Between the vertex sets we introduce the following bijection:
% \begin{eqnarray}
% \phi: V_x & \longrightarrow & V_{abx}\nonumber\\
% v_{i,x(i)} & \longmapsto & v_{i,abx(i)}
% \end{eqnarray}
% Now we consider the following bijection between blue edges: A blue edge is determined by the vertices it connects. So, a blue edge is of the form $\{v_{i,x(i)},v_{j,x(j)}\}$ where $p(i)=p(j)$, which means, blue edges are completely determined by the first component of the indices of the vertices. In fact, the map
% \begin{eqnarray}
% \phi_B: E_{B,x} & \longrightarrow & E_{B,abx}\nonumber\\
% \{v_{i,x(i)},v_{j,x(j)}\} & \longmapsto & \{v_{i,abx(i)},v_{j,abx(j)}\}
% \end{eqnarray}
% is a bijection between the set of blue edges of $G_x$ and $G_{abx}$ and it is compatible with the map $\phi$. Hence $\phi,\phi_B$ defines an isomorphism between the blue parts of $G_x$ and $G_{abx}$.
%\end{proof}

\subsection{Minimal double-coset representatives}

\begin{Definition} \label{minimal representative definition}
Let $M\in \mc{D}$ be an $H$-double coset. An element $g\in M$ is called a \textit{minimal representative} if it satisfies the following:
\begin{enumerate}
\item[(a)] The support $\N(g)\subseteq [g]_{H}$ i.e.
\[
g(\Gamma_i)=\Gamma_j \qquad \Rightarrow \qquad i=j \:\:\: \&\:\:\: g(s)=s, \forall s\in \Gamma_i.
\]
\item[(b)] If $p(g(s))=p(s)$ then $g(s)=s$.
\end{enumerate}
\end{Definition}
\begin{Lemma}\label{minimal representative lemma} Every double coset admits a minimal representative. If $g$ is a minimal representative, then $g^{-1}$ is a minimal representative, and $[g]_H=[g^{-1}]_H$. Minimal elements are closed under conjugation by elements of $H$.
\end{Lemma}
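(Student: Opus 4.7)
The lemma has three assertions: existence of a minimal representative in each $H_n$-double coset, preservation of minimality under inversion (with the accompanying identity $[g]_H = [g^{-1}]_H$), and preservation under $H_n$-conjugation. My plan is to handle existence by an explicit two-stage construction that uses the semidirect decomposition $H_n = \mc{Y}_n \rtimes \mc{S}_n$ from Remark~\ref{tau-ij remark}, and to handle the remaining assertions by symmetry arguments directly on conditions (a) and (b) of Definition~\ref{minimal representative definition}.

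For the construction, fix $g \in S_{kn}$ and set $I = \{i \in [n] : g(\Gamma_i) \text{ is a } \Gamma\text{-part}\}$, with $\sigma(i) \in [n]$ the index for which $g(\Gamma_i) = \Gamma_{\sigma(i)}$ when $i \in I$. Stage one enforces (a). First I would left-multiply $g$ by some $\tau \in \mc{S}_n$ whose underlying permutation of $[n]$ extends $\sigma^{-1}: \sigma(I) \to I$; this makes $\tau g$ stabilize each $\Gamma_i$ ($i \in I$) setwise. Then I would left-multiply by $y \in \mc{Y}_n$ whose restriction to each $\Gamma_i$ ($i \in I$) inverts $(\tau g)|_{\Gamma_i}$ and which is the identity elsewhere. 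Setting $g' = y\tau g$, this element is the identity on $\bigcup_{i \in I} \Gamma_i$ and, by bijectivity, maps $\bigcup_{i \notin I} \Gamma_i$ to itself. Stage two enforces (b) by right multiplication: for each $i \notin I$, define $F_i' = \Gamma_i \cap (g')^{-1}(\Gamma_i)$ and $F_i = g'(F_i')$; because $g'(\Gamma_i) \neq \Gamma_i$, both sets have size less than $k$. I would choose $h \in \mc{Y}_n$ supported on $\bigcup_{i \notin I} \Gamma_i$ whose restriction to each such $\Gamma_i$ agrees with $(g')^{-1}|_{F_i}$ on $F_i$ and is any bijection $\Gamma_i \setminus F_i \to \Gamma_i \setminus F_i'$ on the complement. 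Then $g'' = g'h$ inherits (a) (because $h$ is the identity on $\bigcup_{i \in I} \Gamma_i$), and for $s \in \Gamma_i$ with $g''(s) \in \Gamma_i$ the defining property of $h$ forces $g''(s) = g'((g')^{-1}(s)) = s$, verifying (b).

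For the remaining two assertions I would argue symmetrically. The equivalence $g(\Gamma_j) = \Gamma_i \iff g^{-1}(\Gamma_i) = \Gamma_j$, together with (a) for $g$ forcing $j = i$ and pointwise fixing of $\Gamma_i$, immediately yields (a) for $g^{-1}$ and the equality $[g^{-1}]_H = [g]_H$. Condition (b), rewritten as ``$g(s) \neq s \Rightarrow p(g(s)) \neq p(s)$'', passes to $g^{-1}$ by the substitution $s \mapsto g(s)$. For $h \in H_n$ with induced permutation $\pi$ of $[n]$, the identity $(hgh^{-1})(\Gamma_i) = h(g(\Gamma_{\pi^{-1}(i)}))$ combined with (a) for $g$ gives (a) for $hgh^{-1}$; and because $h$ respects the partner relation $p(h(r)) = p(h(s)) \iff p(r) = p(s)$, condition (b) for $g$ transfers directly to $hgh^{-1}$.

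The principal obstacle is verifying that the right multiplication in stage two does not undo the work of stage one. This reduces to the observation that $g'$ is the identity on $\bigcup_{i \in I} \Gamma_i$, so taking $h$ to be the identity there preserves (a) while still providing enough freedom within the remaining $\Gamma$-parts to correct all partner violations simultaneously. Apart from this bookkeeping, everything is a direct unwinding of definitions.
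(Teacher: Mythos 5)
Your proof is correct and follows essentially the same route as the paper: use the decomposition $H_n=\mc{Y}_n\rtimes\mc{S}_n$ to multiply $g$ by elements of $\mc{S}_n$ and $\mc{Y}_n$ that first enforce condition (a) and then repair the partner violations inside the remaining $\Gamma$-parts to enforce (b), with the inversion and conjugation claims following by unwinding the definitions. The only noteworthy difference is that the paper proceeds by an iterative descent on the support using left multiplications alone (which is what justifies Remark \ref{minimal representative left coset}, that a minimal representative already exists in each one-sided coset $Hg$ and $gH$), whereas your one-shot construction multiplies on both sides and therefore directly produces only a representative of the double coset --- which is all the lemma as stated requires.
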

\begin{proof} We just prove the existence of minimal representatives as the others are formal consequences of the definition of minimal representative. Let $g$ be a permutation in $S_{kn}$.
\begin{enumerate}
\item[(a)] Assume that $\Gamma_i$ is mapped to $\Gamma_j$ for some positive integers $i,j$ in $[n]$. This means $\Gamma_i$ and $\Gamma_j$ are both contained in the support $\N(g)$ of $g$. Consider the permutation $\tau_{ij}g$. The permutation $\tau_{ij}g$  maps $\Gamma_i$ to itself. Moreover, its support $\N(\tau_{ij}g)$ is contained in the support $\N(g)$. Let $h$ be the unique permutation satisfying $h_{\mid \Gamma_i}=\tau_{ij}g_{\mid \Gamma_i}$, and $h_{\mid \Gamma_i^c}=id$. The permutation $h$ is contained in $H$ and the element $h^{-1}g$ is identity on $\Gamma_i$. By construction $\N(h^{-1}g)\subsetneq \N(g)$. The process terminates in finitely many steps. Resulting element satisfies the property (a) of Definition \ref{minimal representative definition}.
\item[(b)] Next we assume that the permutation $g$ satisfies the property (a). Let $s$ be a positive integer $s\in \N(x)$. We write $g(s)=r$ and assume that $p(r)=p(s)$. The element $(rs)g$ satisfies the property (a) and $\N((rs)g)\subseteq\N(g)-{s}$. This process should terminate in finitely many steps. Resulting element satisfies property (b).
\end{enumerate}
\end{proof}

\begin{Remark}\label{minimal representative left coset}
Let $g$ be a permutation. The above proof shows that one can find a minimal representative in the cosets $Hg$ and $gH$.
\end{Remark}

\begin{Example}
Consider the element $g=(1,8,18,21,6,10,13,2,11,3,12)(4,16,19)(5,17,20)$. Following the procedure presented in the previous proof, we multiply $g$ on the left by the permutation $h=(5,20,17)(4,19,16)(6,21,18)$, and obtain the minimal representative
$(1,8,6,10,13,2,11,3,12)$. We also note that $h=\tau_{2,6}\tau_{7,2}$.
\end{Example}

\subsection{Admissiblity of the family $(S_{kn},H_n)_{n\in \N_+}$}

We denote the natural embedding of $S_{kn}$ into $S_{k(n+t)}$ with $\cdot^{\uparrow t}$. The embedding $\cdot^{\uparrow t}$ maps $H_n$ into $H_{n+t}$. If $g$ is a permutation in $S_{kn}$, then
\begin{equation}\label{effect of raising on the graphs}
G_{g^{\uparrow t}}= G_g \sqcup \underbrace{\circ_{k} \sqcup \cdots \sqcup \circ_{k}}_{\text{$t$ many}}
\end{equation}
where $\circ_{k}$ is the graph with a single vertex with $k(k-1)/2$ loops.
\begin{Lemma}
The family $(S_{kn},H_n)$ is saturated.
\end{Lemma}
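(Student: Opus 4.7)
The plan is to reduce the claim directly to the graph parameterization established in Corollary \ref{double coset parameterization}, combined with the formula \eqref{effect of raising on the graphs} for how the graph $G_g$ behaves under the embedding $S_{km}\hookrightarrow S_{kn}$. Fix $m\leq n$ and an $H_n$-double coset $\lambda_n\subset S_{kn}$, and suppose the intersection $\lambda_n\cap S_{km}$ is non-empty. I would pick two arbitrary elements $g_1,g_2\in \lambda_n\cap S_{km}$ and show that they lie in the same $H_m$-double coset of $S_{km}$; the reverse inclusion $H_mg_iH_m\subseteq\lambda_n$ is automatic since $H_m\subseteq H_n$.

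First I would apply Corollary \ref{double coset parameterization} at level $n$: the equality $H_ng_1H_n=H_ng_2H_n$ is equivalent to an isomorphism $\widetilde{G_{g_1}}=\widetilde{G_{g_2}}$ of their coset-type graphs viewed on $n$ vertices. Since $g_1$ and $g_2$ come from $S_{km}$ through the natural embedding, Eq.~\eqref{effect of raising on the graphs} identifies each of these $n$-vertex graphs with the corresponding graph built intrinsically from $g_i\in S_{km}$, disjoint-unioned with exactly $n-m$ extra copies of $\circ_k$ (the single-vertex graph with $k(k-1)/2$ loops).

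The next step is a cancellation argument for disjoint unions of finite graphs. Concretely, from
\[
G_{g_1}\;\sqcup\;\underbrace{\circ_k\sqcup\cdots\sqcup\circ_k}_{n-m\text{ many}}\;\cong\;G_{g_2}\;\sqcup\;\underbrace{\circ_k\sqcup\cdots\sqcup\circ_k}_{n-m\text{ many}},
\]
where the left-hand factors on each side are the $m$-vertex graphs, I would conclude $G_{g_1}\cong G_{g_2}$ as graphs on $m$ vertices. This is valid because the isomorphism class of a finite graph is completely determined by the multiset of isomorphism classes of its connected components, and the multiplicity of the component $\circ_k$ is thus an isomorphism invariant; subtracting $n-m$ from both sides leaves an equality of multisets, hence an isomorphism of the remaining graphs. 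A second application of Corollary \ref{double coset parameterization}, now at level $m$, then delivers $g_1\in H_mg_2H_m$, as required.

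I do not expect any real obstacle: the proof is essentially bookkeeping with the graph invariant, and the only slightly delicate point is the cancellation of the $\circ_k$ components, which is immediate from connected-component uniqueness for finite graphs.
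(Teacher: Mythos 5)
Your proof is correct and follows essentially the same route as the paper: apply Corollary \ref{double coset parameterization} at the larger index, use Eq.~\eqref{effect of raising on the graphs} to peel off the extra $\circ_k$ components, and apply the corollary again at the smaller index. The only difference is that you spell out the cancellation of the $\circ_k$ components via the multiset of connected components, a step the paper leaves implicit.
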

\begin{proof}
Let $g_1$ and $g_2$ be two permutations in $S_{kn}$. Assume that the permutations $g_1^{\uparrow t}$ and $g_2^{\uparrow t}$ are contained in same $H$-double coset for some positive integer $t$. By Eq. \eqref{effect of raising on the graphs} we have
\[
G_{g_1^{\uparrow t}}= G_{g_1} \sqcup \underbrace{\circ_{k} \sqcup \cdots \sqcup \circ_{k}}_{\text{$t$ many}},\qquad
G_{g_2^{\uparrow t}}= G_{g_2} \sqcup \underbrace{\circ_{k} \sqcup \cdots \sqcup \circ_{k}}_{\text{$t$ many}}.
\]
By Corollary \ref{double coset parameterization} the graphs $G_{g_1^{\uparrow m}}$ and $G_{g_2^{\uparrow m}}$ are isomorphic. This implies that the graphs $G_{g_1}$ and $G_{g_2}$ are isomorphic. A second use of Corollary \ref{double coset parameterization} finishes the proof.
\end{proof}

\begin{Proposition}\label{admissiblility of the family}
Let $M$, $N,$ $L$ be three $H$-double cosets. If $g$ is a minimal representative for $L$ then the fiber set
\[
V=V(M\times N:g^{H})
\]
admits finitely $H\times H$ orbits.
\end{Proposition}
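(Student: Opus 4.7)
The plan is to use the reverted $(H\times H)$-action on $M\times N$ to bring any element $(g_1,g_2)\in V$ to a representative whose support lies in a fixed finite set and which varies over only finitely many configurations; finiteness of the orbit count then follows immediately. The argument proceeds in four stages.

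\textbf{Stage one (reduction to $V_g$).} Since $g_1g_2$ transforms under $(h_1,h_2)$ by conjugation by $h_1$ (independently of $h_2$), I may first act by some $(h_1,1)$ to replace $g_1g_2$ by $g$ exactly. This reduces the problem to counting the orbits of the residual action of $C_H(g)\times H$ on
\[
V_g := \{(g_1,g_2)\in M\times N : g_1 g_2 = g\}.
\]

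\textbf{Stage two (minimizing $g_1$).} The $(1,h_2)$-component preserves the product, so by Remark \ref{minimal representative left coset} applied to the right coset $g_1 H$ I may choose $h_2$ so that $g_1 h_2^{-1}$ is a minimal representative. After renaming, $\N(g_1)\subseteq [g_1]_H =: B$, a finite union of $\Gamma$-parts whose cardinality depends only on $M$. Since $g_2 = g_1^{-1}g$ and, crucially, $g$ itself is minimal,
\[
\N(g_2)\subseteq \N(g_1^{-1})\cup \N(g) = \N(g_1)\cup \N(g) \subseteq B\cup A,
\]
where $A:=[g]_H$. Thus $(g_1,g_2)$ is entirely supported in the finite set $A\cup B$.

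\textbf{Stage three (normalizing the position of $B$).} The diagonal subgroup $\{(h,h): h\in C_H(g)\}$ of $C_H(g)\times H$ acts by simultaneous conjugation and preserves both $g_1g_2=g$ and the minimality of $g_1$, because $\N(hg_1h^{-1}) = h(\N(g_1))\subseteq h(B) = [hg_1h^{-1}]_H$. Decompose $B=(B\cap A)\sqcup(B\setminus A)$ and set $c:=|B\cap A|/k$. First, elements $h\in H_{A^c}\subseteq C_H(g)$ fix $B\cap A$ pointwise while permuting $\Gamma$-parts of $A^c$ arbitrarily, so $B\setminus A$ can be moved to a fixed canonical union of $\Gamma$-parts in $A^c$. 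Next, elements $h\in C_{H_A}(g)\subseteq C_H(g)$ fix $B\setminus A$ and act on the finite collection of $c$-element subsets of the $\Gamma$-parts of $A$ with only finitely many orbits (since $H_A$ is finite). As $c$ takes only finitely many values, $B$ ends up in one of finitely many canonical positions after these normalizations.

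\textbf{Stage four (conclusion).} With $B$ canonically positioned, $g_1$ is a permutation of the fixed finite set $B$ and $g_2=g_1^{-1}g$ is determined by $g_1$; there are therefore only finitely many possible pairs $(g_1,g_2)$ per canonical $B$. Summing over the finitely many canonical positions of $B$ yields the required finiteness of $H\times H$-orbits in $V$. The main technical subtlety is the compatibility of the successive normalizations: this is why Stage three uses the \emph{diagonal} subgroup of $C_H(g)\times H$ rather than independent left/right actions, since only simultaneous conjugation preserves both the equation $g_1 g_2 = g$ and the minimality of $g_1$ achieved in Stage two.
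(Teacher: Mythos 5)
Your proof is correct and follows essentially the same route as the paper: use one component of the reverted action to make $g_1$ a minimal representative, observe that the supports of $g_1$ and $g_2$ are then confined to a union of $H$-supports of bounded cardinality, and conjugate diagonally into a canonical finite window so that only finitely many representative pairs remain. The only real difference is that you pin the product to $g$ exactly and must therefore normalize $[g_1]_H$ using subgroups of $C_H(g)$, whereas the paper keeps the product only in the class $g^H$ and can move the whole union $T=[g_1]_H\cup[g_1g_2]_H$ into an initial segment $[kt]$ by an arbitrary diagonal $\tau\in H$; both normalizations are valid.
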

\begin{proof}
We will show that there exists a positive integer $ku$ such that every orbit has a representative in $V$ contains an element in $S_{ku}\times S_{ku}$. Assume that the pair $(g_1,g_2)$ is contained in $V$. By Remark \ref{minimal representative left coset} there exists a permutation $h_1\in H$ such that $h_1g_1$ is a minimal representative of $M$. The pair
\[
(h_1g_1,g_2h_1^{-1})=(h_1,id)\cdot (g_1,g_2)
\] is contained in the $H\times H$-orbit of $(g_1,g_2)$. On the other hand, by Lemma \ref{minimal representative lemma} we know that the permutation $h_1g_1g_2h_1^{-1}$ is a minimal representative of $L$. Thus, without loss of generality we may assume that $g_1$ and $g_1g_2$ are minimal representatives of their $H$-double cosets. The union
\[
T=[g_1]_{H}\cup[g_1g_2]_{H}
\]
contains the supports $\N(g_1)$ and $\N(g_1g_2)$, because $g_1$ and $g_1g_2$ are both minimal representatives. Since $H$-supports can be covered by the sets of the form $\Gamma_i$, we may assume that the set $T$ is equal to the union of $\Gamma_{i_1},\cdots \Gamma_{i_t}$. The permutation $\tau=\tau_{1i_1}\cdots \tau_{ti_t}$ maps $T$ onto $[kt]$. This means $\tau g_1\tau^{-1}$ and $\tau g_1g_2 \tau^{-1}$ are both elements of $S_{kt}$. Since $\tau g_2\tau^{-1}=\tau g_1^{-1}\tau^{-1}\tau g_1g_2\tau^{-1}$ it follows that $\N(\tau g_2\tau^{-1})\subset [kt]$. As a result we have
\[
(\tau x\tau^{-1},\tau y\tau^{-1})\in V\cap S_{kt}\times S_{kt}.
\]
This means the orbit of $(x,y)$ has a representative in $S_{tk}$. The cardinality of the set $T$ is $kt$ and it is bounded by $ku:=|[g_1]_H|+|[g_1g_2]_H|$. But the cardinalities $[g_1]_H|,|[g_1g_2]_H|$ are constant on double-cosets. Hence, every orbit has a representative in the finite set $S_{ku}\times S_{ku}$. Thus, there exist finitely many orbits.
\end{proof}

\subsection{Centeralizers of the minimal representatives}

In this subsection we investigate the intersection
\[
H_n\cap gH_ng^{-1}
\]
and prove that the structural coefficients $c_{M,N}^{L}(n)$ are polynomials in $n$.

\begin{Remark}\label{centralizer maps the support to itself}
Let $g$ be a permutation in $S_n$ and let $C_{S_n}(g)$ be the centralizer of $g$. The following hold:
\begin{enumerate}
\item If $g'$ is in the centralizer $C_{S_n}(g)$ then $g'$ maps $\N(g)$ to $\N(g)$.
\item The centralizer $C_{S_n}(g)$ admits the following direct-sum product:
\[
C_{S_n}(g)=C_{\N(g)}(g)\oplus S_{[n]-\N(g)}.
\]
\end{enumerate}
\end{Remark}

Our next result is analogous to the first part of Remark \ref{centralizer maps the support to itself}.

\begin{Lemma}\label{normalized part by a minimal representative}
Let $g\in S_{kn}$ be a minimal representative of its $H_n$-double coset. If
\[
h\in H_{n}\cap gH_ng^{-1},
\]
then $h$ maps $[g]$ bijectively onto $[g]$.
\end{Lemma}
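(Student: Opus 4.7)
The plan is to show that the orbits of $h$ on the set of $\Gamma$-parts $\{\Gamma_1,\dots,\Gamma_n\}$ respect the partition into those contained in $[g]_H$ and those in its complement. Since $h\in H_n$ permutes the $\Gamma$-parts of $[kn]$, and since $[g]_H$ is a union of $\Gamma$-parts, it suffices to prove that $h$ sends the $\Gamma$-parts of $[kn]\setminus [g]_H$ into $\Gamma$-parts of $[kn]\setminus[g]_H$; the conclusion then follows by a bijection on the finite set $\{\Gamma_1,\dots,\Gamma_n\}$.

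The key is to exploit the minimality of $g$ through condition (a) of Definition \ref{minimal representative definition}. By hypothesis, write $h=gh'g^{-1}$ for some $h'\in H_n$. Now fix any $\Gamma_i\subseteq [kn]\setminus[g]_H$. By the definition of $[g]^o_H$, this means $g(\Gamma_i)=\Gamma_{i'}$ for some index $i'$; by minimality condition (a), we must have $i'=i$, and moreover $g$ acts as the identity on $\Gamma_i$. In particular $g^{-1}(\Gamma_i)=\Gamma_i$.

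Using this, I can compute directly: $h(\Gamma_i)=gh'g^{-1}(\Gamma_i)=gh'(\Gamma_i)=g(\Gamma_j)$, where $\Gamma_j=h'(\Gamma_i)$ is some $\Gamma$-part because $h'\in H_n$. On the other hand, $h\in H_n$ also, so $h(\Gamma_i)=\Gamma_k$ for some $k$. Combining, $g(\Gamma_j)=\Gamma_k$, and by minimality condition (a) applied to $g$ we conclude $k=j$ and $g$ is the identity on $\Gamma_j$. Thus $\Gamma_j\subseteq [kn]\setminus[g]_H$, which is exactly what was needed: $h$ preserves the complement of $[g]_H$, hence also $[g]_H$ itself, bijectively.

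The argument is essentially bookkeeping driven by the semidirect decomposition $H_n=\mc{Y}_n\rtimes \mc{S}_n$ and the minimality property (a); condition (b) plays no role here, and no serious obstacle arises. The only subtlety worth flagging is the need to justify that a $\Gamma$-part outside $[g]_H$ is fixed pointwise (not just as a set) by $g$, which is precisely the content of Definition \ref{minimal representative definition}(a) — this is what makes $g^{-1}(\Gamma_i)=\Gamma_i$ a legitimate step inside the computation of $h(\Gamma_i)$.
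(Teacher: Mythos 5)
Your proof is correct, and it takes a genuinely different route from the paper's, arguably a cleaner one. The paper fixes an element $s\in[g]_H$, assumes $h(s)\notin[g]_H$ for contradiction, produces a witness $r$ with $p(r)=p(g^{-1}(s))$ but $p(g(r))\neq p(s)$ (such an $r$ exists precisely because the part containing $s$ is genuinely scattered by $g$), and then splits into two cases according to whether $hg(r)$ lies in $[g]_H$, in each case contradicting the fact that $h'=g^{-1}hg$ preserves the partner relation. You instead work entirely on the complement of $[g]_H$, which is exactly the region where minimality condition (a) forces $g$ to act as the identity; there the conjugate $h=gh'g^{-1}$ can be evaluated on a whole $\Gamma$-part in one line, and the observation that both $h$ and $h'$ send $\Gamma$-parts to $\Gamma$-parts forces $g(\Gamma_j)$ to be a $\Gamma$-part, hence (by condition (a) again) equal to $\Gamma_j$ and contained in $[kn]\setminus[g]_H$. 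The final step converting ``$h$ maps the complement into itself'' into ``$h$ maps $[g]_H$ bijectively onto $[g]_H$'' by injectivity on a finite set is sound. Your version avoids both the contradiction and the case analysis at no cost: the two arguments consume the same two ingredients (elements of $H_n$ permute $\Gamma$-parts, and condition (a) fixes the complement of $[g]_H$ pointwise), and you are right that condition (b) plays no role --- it is not used in the paper's proof either. One cosmetic remark: the semidirect decomposition $H_n=\mathcal{Y}_n\rtimes\mathcal{S}_n$ you invoke at the end is not actually needed; the fact that $H_n$ stabilizes the partition $(\Gamma_i)_i$ is all your computation uses.
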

\begin{proof}
Let $h,h'$ be two permutations in $H_n$ and assume that $h=gh'g^{-1}$. Assume that $s$ is a positive integer contained in the $H$-support $[g]_B$. We shall show that $h(s)\in [g]_H$. 	To this end we assume that $h(s)\notin [g]_{H}$ and derive a contradiction. By Lemma \ref{minimal representative definition}, the permutation $x^{-1}$ is a minimal representative . So there exists a positive integer $r$ such that $p(r)=p(g^{-1}(s))$ but $p(g(r))\neq p(g(g^{-1}(s)))=p(s)$. We claim that $p(h'(r))\neq p(h'(g^{-1}(s)))$.

We first consider $h'(g^{-1}(s))$. Note that $h'=g^{-1}hg$ and the minimality of $g$ implies $\N(g^{-1})=\N(g)\subset [g]_H$. The assumption $h(s)\notin [g]_{H}$ implies $g^{-1}\big(hg(g^{-1}(s))\big)=g^{-1}(h(s))=h(s)$. In short
\begin{equation}\label{g-1(s)}
h'(g^{-1}(s))=h(s)\notin [g]_B \quad \& \quad
p(h'(g^{-1}(s))=p(h(s)).
\end{equation}
Next we consider $h'(r)$. The two different cases are investigated separately:
\begin{enumerate}
\item First we assume that the integer $hg(r)$ is contained in $[g]_{H}$. This implies that the integer $h'(r)=g^{-1}hg(r)$ is contained in the $H$-support $[g]_{H}$. On the other hand $h(s)\notin [g]_{H}$ as noted in the Eq. \eqref{g-1(s)}. Hence the integers $p(h'(r))$ and $p(h'(g^{-1}(s)))$ can not be equal. This contradictions with the fact that $h'\in H_n$.
\item Next we assume that $hg(r)$ is not contained in $[g]_{H_n}$. The minimality of $g^{-1}$ and the equality $\N(g)=\N(g^{-1})$ together imply that $hg(r)$ is fixed by $g^{-1}$. Consequently we have $h'(r)=g^{-1}hg(r)=hg(r)$. Our initial assumption $p(g(r))\neq p(s)$ yields
\begin{eqnarray}
p(h'(r)) & = & p(hg(r))\nonumber\\
(\text{as $h\in H_n$}) & \neq & p(h(s))\nonumber\\ (\text{Eq. \eqref{g-1(s)}}) & = & p(h'(g^{-1}(s)).
\end{eqnarray} But $h'\in H_n$ and $p(r)=p(g^{-1}(s))$. A contradiction.
\end{enumerate}
\end{proof}

\begin{Corollary}\label{intersection through minimal representative remark}
	If $g$ is a minimal representative contained in $S_{kn}$, then
	\begin{equation}\label{intersection through minimal representative}
	H_{n}\cap gH_ng^{-1}=(H_{[g]_H}\cap gH_{[g]_H}g^{-1})\oplus H_{[kn]-[g]_{H}}.
	\end{equation}
\end{Corollary}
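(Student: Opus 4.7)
The plan is to split any element of $H_n\cap gH_ng^{-1}$ along the decomposition $[kn]=[g]_H\sqcup([kn]-[g]_H)$, using Lemma \ref{normalized part by a minimal representative} to guarantee that such a splitting exists, and the minimality of $g$ to control how conjugation by $g$ interacts with it. A first observation I would record is that condition~(a) of Definition \ref{minimal representative definition} forces $g$ to act as the identity on $[kn]-[g]_H$, and combined with Lemma \ref{minimal representative lemma} (giving that $g^{-1}$ is also minimal with $[g^{-1}]_H=[g]_H$) implies that both $g$ and $g^{-1}$ restrict to bijections of $[g]_H$ onto itself.

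The inclusion $\supseteq$ is the easier half, and I would handle it first. The summand $H_{[g]_H}\cap gH_{[g]_H}g^{-1}$ sits inside $H_n\cap gH_ng^{-1}$ by construction. For the other summand, any element of $H_{[kn]-[g]_H}$ has support disjoint from $\N(g)\subseteq[g]_H$, hence commutes with $g$ and is fixed by conjugation by $g$, so it lies in both $H_n$ and $gH_ng^{-1}$. The two summands commute and intersect trivially because their supports are disjoint, so their internal direct sum is contained in the intersection.

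For the reverse inclusion $\subseteq$, I would take $h=gh'g^{-1}$ with $h,h'\in H_n$. Lemma \ref{normalized part by a minimal representative} applied to $h$ shows that $h$ preserves $[g]_H$ setwise; applied to $h'=g^{-1}hg$ (using that $g$ and $g^{-1}$ preserve $[g]_H$) it yields the same for $h'$. Since $[g]_H$ and its complement are both unions of $\Gamma$-parts and $h,h'\in H_n$ permute the $\Gamma$-parts, restricting each of $h$ and $h'$ to these two invariant subsets produces factorizations $h=h_1h_2$ and $h'=h'_1h'_2$ with $h_1,h'_1\in H_{[g]_H}$ and $h_2,h'_2\in H_{[kn]-[g]_H}$. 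Because $g$ acts trivially on $[kn]-[g]_H$, one computes $gh'g^{-1}=(gh'_1g^{-1})\,h'_2$; matching this against $h_1h_2$ (whose factors commute and are determined by their supports) gives $h_1=gh'_1g^{-1}$ and $h_2=h'_2$, placing $h$ in the claimed direct sum.

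The main obstacle I anticipate is the bookkeeping in this last step: verifying that the decomposition $h=h_1h_2$ really lands in the two subgroups $H_{[g]_H}$ and $H_{[kn]-[g]_H}$ separately, and then cleanly matching the two factorizations across the conjugation by $g$. Both of these rely on the minimality of $g$, in particular on the fact that $g$ restricts to the identity on $[kn]-[g]_H$, together with Lemma \ref{normalized part by a minimal representative}.
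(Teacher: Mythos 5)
Your proposal is correct and follows essentially the same route as the paper: both hinge on Lemma \ref{normalized part by a minimal representative} to get invariance of $[g]_H$, split along the union of $\Gamma$-parts $[kn]=[g]_H\sqcup([kn]-[g]_H)$, and use that minimality forces $g$ to act as the identity off $[g]_H$ so that conjugation passes through the complementary factor. The only cosmetic difference is that you decompose both $h$ and $h'$ and match factors, while the paper decomposes only $h'$ and conjugates; this changes nothing of substance.
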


\begin{proof}
	Let $g$ be a minimal representative, and $h_1$ be an element of the intersection $H_n\cap gH_ng^{-1}$. Then there exists $h_2\in H_n$ such that $h_1=gh_2g^{-1}$. Writing $h_2=g^{-1}h_1g$ and applying the previous Lemma we deduce that $h_2$ maps $[g^{-1}]_H$ to itself. Since  $[g^{-1}]_H=[g]_H$, cf. Lemma \ref{minimal representative lemma}, we can write $h_2$ as a product $h_{21}h_{22}$, where $h_{21}\in H_{[g]_H}$ and $h_{22}\in H_{[kn]-[g]_H}$. Consequently
	\begin{eqnarray}
	h_1 = gh_2g^{-1} & = & gh_{21}h_{22}g^{-1}\nonumber\\
	& = & gh_{21}g^{-1}gh_{22}g^{-1}\nonumber\\
	& = & gh_{21}g^{-1}h_{22}
	\end{eqnarray}
	The elements $h_1$ and $h_{22}$ are in $H$, thus $gh_{21}g^{-1}$ is in $H$. As $g$ and $h_{21}$ are permutations of $[g]_H$ it follows that $gh_{21}g^{-1}\in H_{[g]_H}\cap gH_{[g]_H}g^{-1}$. Putting these altogether we see that $h_1$ is an element of the intersection $(H_{[g]_H}\cap gH_{[g]_H}g^{-1})\oplus H_{[kn]-[g]_H}$. The converse inclusion can be shown easily. As a result we get the following decomposition:
	\begin{equation}\label{intersection through minimal representative}
	H_{n}\cap gH_ng^{-1}=(H_{[g]_H}\cap gH_{[g]_H}g^{-1})\oplus H_{[kn]-[g]_{H}},
	\end{equation}
\end{proof}

Now we generalize the second part of Remark \ref{centralizer maps the support to itself}.

\begin{Corollary}
If $g$ is a minimal representative in $S_{kn}$, then
\begin{equation}\label{centralizer of a minimal representative}
C_{H_{n}}(g)=C_{H_{[g]_H}}\oplus H_{[kn]-[g]_{H}}.
\end{equation}
\end{Corollary}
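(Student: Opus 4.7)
The plan is to reduce the statement to the previous corollary (Eq.~\eqref{intersection through minimal representative}) together with the observation that minimality of $g$ forces its support to lie in $[g]_H$, so that $g$ acts as the identity on $[kn]-[g]_H$.

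First I would note the trivial inclusion $C_{H_n}(g)\subseteq H_n\cap gH_ng^{-1}$: if $h\in H_n$ satisfies $hg=gh$, then $h=ghg^{-1}\in gH_ng^{-1}$. Hence every element of $C_{H_n}(g)$ lies in the right-hand side of Eq.~\eqref{intersection through minimal representative} and can be written uniquely as $h=h_1h_2$ with $h_1\in H_{[g]_H}\cap gH_{[g]_H}g^{-1}$ and $h_2\in H_{[kn]-[g]_H}$. The point is now to identify which such products commute with $g$.

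Because $g$ is a minimal representative, condition (a) of Definition~\ref{minimal representative definition} guarantees $\N(g)\subseteq [g]_H$, so $g$ fixes every element of $[kn]-[g]_H$ pointwise. In particular any $h_2\in H_{[kn]-[g]_H}$ has support disjoint from $\N(g)$ and therefore commutes with $g$. Consequently $hg=gh$ if and only if $h_1g=gh_1$. Since $h_1$ and $g$ are both supported inside $[g]_H$, the latter is equivalent to $h_1\in C_{H_{[g]_H}}(g)$.

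Putting the pieces together yields the decomposition
\[
C_{H_n}(g)=C_{H_{[g]_H}}(g)\oplus H_{[kn]-[g]_H},
\]
with the reverse inclusion being automatic (elements of $C_{H_{[g]_H}}(g)$ commute with $g$ by definition, and elements of $H_{[kn]-[g]_H}$ commute with $g$ by the disjoint-support argument above). There is no real obstacle here; the only thing to be careful about is invoking the minimality hypothesis at the right moment, since without it the complement $[kn]-[g]_H$ need not be pointwise fixed by $g$ and the second summand would fail to centralize $g$.
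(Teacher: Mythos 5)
Your proposal is correct and follows essentially the same route as the paper: both reduce to the inclusion $C_{H_n}(g)\subseteq H_n\cap gH_ng^{-1}$, invoke the decomposition of Eq.~\eqref{intersection through minimal representative}, and use that $H_{[kn]-[g]_H}$ centralizes $g$ because minimality forces $\N(g)\subseteq[g]_H$. Your write-up merely makes explicit the steps the paper compresses into ``arguing as in the proof of Corollary~\ref{intersection through minimal representative remark}.''
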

\begin{proof}
Let $h$ be contained in the centralizer $C_{H_n}(g)$. The equality $ghg^{-1}=h$ implies $h\in H_n\cap gH_ng^{-1}$. Now the
result follows by arguing as in the proof of Corollary \ref{intersection through minimal representative remark} and using the fact that $H_{[kn]-[g]_{H}}$ is contained in the
centralizer of $g$.
\end{proof}

In order to prove  the polynomiality of the structure coefficients we need yet another direct-sum decomposition. Recall from the formula given in Corollary \ref{abstract formula for structure functions} that the structural coefficients are finite sums of the quotients of the form
\begin{equation}\label{polynomialty via quotients}
\frac{|C_{H_n}(g_1g_2)|}{|C_{H_n}(g_1g_2)\cap g_1H_{n}g_1^{-1}|}.
\end{equation}
By the proof of Proposition \ref{admissiblility of the family} we may assume that $g_1$ and $g_1g_2$ are minimal representatives of their $H$-double cosets. We will derive the polynomiality result by showing that the quotient in Eq.\eqref{polynomialty via quotients} is a polynomial in $n$. To this end we will combine two direct-sum decompositions given in Eq.\eqref{intersection through minimal representative} and Eq.\eqref{centralizer of a minimal representative}. So lets assume that $g_1$ and $g_2$ are two permutations in $S_{km}$ and $g_1$ and $g_1g_2$ are minimal representatives. Let $T$ denote the union $[g_1g_2]_{H}\cup [g_1]_{H}$ and set
\begin{equation}
\text{$T_1 = [g _1g_2]_{H}- [g_1]_{B},\quad$ $T_2 = [g_1g_2]_{H}\cap [g_1]_{H}, \:\:\:$ and $\quad T_3 = [g_1]_{H} - [g_1g_2]_{H}.$}
\end{equation}  

\begin{Corollary}
For every positive integer $n\geq m$, the intersection $C_{H_n}(g_1g_2)\cap g_1H_{n}g_1^{-1}$ admits the following direct-sum decomposition:
\begin{eqnarray}\label{triple intersection decomposition}
C_{H_n}(g_1g_2)\cap g_1H_{n}g_1^{-1} =  (C_{H_T}(g_1g_2)\cap g_1H_T g_1^{-1}) \oplus H_{[kn]-T}.
\end{eqnarray}
\end{Corollary}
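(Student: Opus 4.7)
The plan is to build the claimed decomposition by combining the two previously established decompositions — Eq.~\eqref{centralizer of a minimal representative} applied to $g_1g_2$ and Eq.~\eqref{intersection through minimal representative} applied to $g_1$ — and then factoring out $H_{[kn]-T}$ as a common direct summand.

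First I would use the set-theoretic equalities $[kn]-[g_1g_2]_H = T_3 \sqcup ([kn]-T)$ and $[kn]-[g_1]_H = T_1 \sqcup ([kn]-T)$, which follow from the definitions of $T_1, T_2, T_3$ together with $T = T_1 \sqcup T_2 \sqcup T_3$. Since $H_A$ and $H_B$ commute and intersect trivially whenever $A, B$ are disjoint $\Gamma$-unions, these refine the two cited decompositions to
\begin{equation}
C_{H_n}(g_1g_2) = C_{H_{[g_1g_2]_H}}(g_1g_2) \oplus H_{T_3} \oplus H_{[kn]-T},
\end{equation}
\begin{equation}
H_n \cap g_1 H_n g_1^{-1} = \bigl(H_{[g_1]_H} \cap g_1 H_{[g_1]_H} g_1^{-1}\bigr) \oplus H_{T_1} \oplus H_{[kn]-T}.
\end{equation}

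Next I would re-apply the same two formulas, this time inside the smaller ambient group $H_T$. This is legitimate because $g_1$ and $g_1g_2$ remain minimal representatives when regarded as permutations of the $\Gamma$-parts contained in $T$, with $H$-supports still $[g_1]_H$ and $[g_1g_2]_H$ respectively. The formulas then identify the first two summands on each right-hand side above as $C_{H_T}(g_1g_2)$ and $H_T \cap g_1 H_T g_1^{-1}$, so that the two decompositions collapse to
\begin{equation}
C_{H_n}(g_1g_2) = C_{H_T}(g_1g_2) \oplus H_{[kn]-T}, \qquad H_n \cap g_1 H_n g_1^{-1} = \bigl(H_T \cap g_1 H_T g_1^{-1}\bigr) \oplus H_{[kn]-T}.
\end{equation}

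Intersecting the two right-hand sides finishes the proof: since $H_T \cap H_{[kn]-T} = \{e\}$, the distributive identity $(A \oplus B) \cap (A' \oplus B) = (A \cap A') \oplus B$ applies with $A, A' \subseteq H_T$, and the middle $H_T$ term that would otherwise appear is absorbed by $C_{H_T}(g_1g_2) \subseteq H_T$. The only genuine obstacle is the bookkeeping between the three pieces $T_1, T_2, T_3$ and checking that $g_1$ and $g_1g_2$ remain minimal when the ambient group is shrunk from $H_n$ to $H_T$; both points are immediate from the definitions, so there is no additional content beyond the two prior decomposition lemmas.
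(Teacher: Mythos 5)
Your argument is correct and rests on exactly the same ingredients as the paper's proof, namely combining Eq.~\eqref{centralizer of a minimal representative} and Eq.~\eqref{intersection through minimal representative} to see that elements of the intersection split along $T$ versus $[kn]-T$; the paper phrases this as an element-wise check that any $h$ in the intersection preserves $[g_1]_H$ and $[g_1g_2]_H$ (hence $T$), while you intersect the two refined global decompositions. Your version merely makes explicit the bookkeeping with $T_1,T_2,T_3$ that the paper compresses into ``arguing as in Corollary~\ref{intersection through minimal representative remark}.''
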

\begin{proof}
Let $h$ be an element of the intersection $C_{H_n}(g_1g_2)\cap g_1H_{n}g_1^{-1}$. Combining Eq.\eqref{triple intersection decomposition} and Eq.\eqref{intersection through minimal representative}  we deduce that
\begin{equation}\label{h preserves T-i's}
h([g_1]_H)=[g_1]_H \qquad h([g_1g_2]_H)=[g_1g_2]_H.
\end{equation} As a result, the permutation $h$ maps $T$ to itself. One can now prove the existence of the direct-sum decomposition arguing as in Corollary \ref{intersection through minimal representative remark} and using using the fact that $h$ fixes $T$ set-wise.
\end{proof}

\begin{Corollary}\label{quotients are polynomials}
If $g_1$ and $g_1g_2$ are minimal representatives of their $H$-double cosets then the quotient
\begin{equation}
\frac{|C_{H_n}(g_1g_2)|}{|C_{H_n}(g_1g_2)\cap g_1H_{n}g_1^{-1}|}.
\end{equation}
is a polynomial in $n$.
\end{Corollary}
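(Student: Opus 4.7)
The plan is to combine the two direct-sum decompositions already established, namely Eq.~\eqref{centralizer of a minimal representative} for $C_{H_n}(g_1g_2)$ and Eq.~\eqref{triple intersection decomposition} for the intersection $C_{H_n}(g_1g_2)\cap g_1 H_n g_1^{-1}$, and then exploit the fact that only the ``complementary'' factors of these decompositions depend on $n$. All the $n$-dependence will be isolated into cardinalities of $H$-groups indexed by cofinite unions of $\Gamma$-parts, whose sizes are explicit factorial-exponential products by the formula $|H_A|=|J|!(k!)^{|J|}$ from Remark~\ref{tau-ij remark}.

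Concretely, set $s=|[g_1g_2]_H|/k$ and $t=|T|/k$, where $T=[g_1g_2]_H\cup[g_1]_H$. Since $[g_1g_2]_H\subseteq T$ we have $s\leq t$, and both $s$ and $t$ are constants determined by the double cosets of $g_1$ and $g_1g_2$, independent of $n$. The decomposition of Corollary~\ref{centralizer of a minimal representative} gives
\[
|C_{H_n}(g_1g_2)|=|C_{H_{[g_1g_2]_H}}(g_1g_2)|\cdot(n-s)!\,(k!)^{n-s},
\]
while Corollary~\ref{triple intersection decomposition} gives
\[
|C_{H_n}(g_1g_2)\cap g_1H_n g_1^{-1}|=|C_{H_T}(g_1g_2)\cap g_1H_T g_1^{-1}|\cdot(n-t)!\,(k!)^{n-t}.
\]
Both leading constants are fixed integers independent of $n$.

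Taking the ratio, the powers of $k!$ collapse to the constant $(k!)^{t-s}$, and the factorial quotient simplifies to the falling factorial
\[
\frac{(n-s)!}{(n-t)!}=(n-s)(n-s-1)\cdots(n-t+1),
\]
which is a polynomial in $n$ of degree $t-s$. Hence
\[
\frac{|C_{H_n}(g_1g_2)|}{|C_{H_n}(g_1g_2)\cap g_1H_n g_1^{-1}|}=\frac{|C_{H_{[g_1g_2]_H}}(g_1g_2)|\,(k!)^{t-s}}{|C_{H_T}(g_1g_2)\cap g_1H_T g_1^{-1}|}\cdot (n-s)(n-s-1)\cdots(n-t+1)
\]
is a polynomial in $n$, of degree $t-s$, with constant leading coefficient depending only on the double cosets involved.

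There is essentially no obstacle here: the hard work has already been done in establishing the two direct-sum decompositions, which rely on Lemma~\ref{normalized part by a minimal representative} (the fact that conjugation by a minimal representative preserves $[g]_H$). The only thing one must verify carefully is that $s$ and $t$ are indeed invariants of the double cosets of $g_1$ and $g_1g_2$ rather than of the specific representatives, which follows from Lemma~\ref{minimal representative lemma} since the sets $[g_1]_H$ and $[g_1g_2]_H$ depend only on the double-coset types after passing to minimal representatives (up to a permutation of which $\Gamma$-parts are involved, which does not affect their number). Once this is recorded, the polynomiality is just arithmetic.
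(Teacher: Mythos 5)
Your proof is correct and follows essentially the same route as the paper: both combine the decomposition of $C_{H_n}(g_1g_2)$ from Eq.~\eqref{centralizer of a minimal representative} with the decomposition of the intersection from Eq.~\eqref{triple intersection decomposition}, isolate all $n$-dependence into the complementary factors $H_{[kn]-[g_1g_2]_H}$ and $H_{[kn]-T}$, and reduce to the falling factorial $(n-s)\cdots(n-t+1)$ times an $n$-independent constant (your $s$ and $t$ are the paper's $t_1+t_2$ and $t_1+t_2+t_3$). If anything, your explicit identification of the constant is slightly cleaner than the paper's $\zeta_{g_1,g_2}$.
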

\begin{proof}
Using the decomposition given in Eq.\eqref{centralizer of a minimal representative} and Eq.\eqref{triple intersection decomposition} we have the equality
\begin{equation}\label{first equation of quotients are polynomials}
\frac{|C_{H_n}(g_1g_2)|}{|C_{H_n}(g_1g_2)\cap g_1H_{n}g_1^{-1}|}  =  \frac{\mid C_{H_{T_1\cup T_2}}(g_1g_2)\cap g_1H_{T_1\cup T_2} g_1^{-1}\mid }{\mid C_{H_{T_1\cup T_2 \cup T_3}}(g_1g_2)\cap g_1H_{T_1\cup T_2 \cup T_3} g_1^{-1}\mid }\cdot\Big|
\frac{H_{[kn]-(T_1\cup T_2)}}{H_{[kn]-({T_1\cup T_2 \cup T_3})}}\Big|
\end{equation}
If we denote the cardinality $|T_i|$ by $kt_i$ then Eq.\eqref{first equation of quotients are polynomials} can be written as
\begin{eqnarray}
\frac{|C_{H_n}(g_1g_2)|}{|C_{H_n}(g_1g_2)\cap g_1H_{n}g_1^{-1}|}  & = & \zeta_{g_1,g_2}\cdot  \frac{(k!)^{n-{t_1-t_2}}(n-{t_1-t_2})!}{(k!)^{n-{t_1-t_2-t_3}}(n-{t_1-t_2-t_3})!},\nonumber
\end{eqnarray}
where $\zeta_{g_1,g_2}$ denotes the first multiplicand of the RHS of the Eq.\eqref{first equation of quotients are polynomials}, which is independent of $n$. In other words
\begin{equation}
\frac{|C_{H_n}(g_1g_2)|}{|C_{H_n}(g_1g_2)\cap g_1H_{n}g_1^{-1}|}= \zeta_{g_1,g_2}\cdot  (k!)^{t_3}(n-{t_1-t_2})(n-{t_1-t_2}-1)\cdots (n-{t_1-t_2-t_3}+1)\label{polynomiality of the coefficient}.
\end{equation}
\end{proof}

We are ready to show that the structure coefficients are polynomial functions.
\begin{Proposition}
The structure coefficient $c_{M,N}^{L}(n)$ is a polynomial in $n$, for all $H$-double cosets $M,N$, and $L$.
\end{Proposition}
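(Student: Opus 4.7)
The plan is to assemble the pieces already developed in this section. By Corollary~\ref{abstract formula for structure functions}, for $n$ large enough that $M(n)$, $N(n)$, $L(n)$ are non-empty, one has the formula
\[
c_{M,N}^{L}(n) = \sum_{i=1}^{r} \frac{|C_{H_n}(g_{1i}g_{2i})|}{|C_{H_n}(g_{1i}g_{2i})\cap g_{1i}H_{n}g_{1i}^{-1}|},
\]
where the pairs $(g_{1i},g_{2i})$ are representatives of the $H_\infty\times H_\infty$-orbits on the fiber set $V(M\times N:g^{H_\infty})$ for some fixed $g\in L$. The first step is to invoke Proposition~\ref{admissiblility of the family} to guarantee that this sum is finite and the set of representatives is independent of $n$; in particular, the index $r$ and the pairs $(g_{1i},g_{2i})$ can all be chosen inside some fixed $S_{km}\times S_{km}$ for an $m$ depending only on $M,N,L$.

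Next, I would reduce to the case where each representative pair $(g_{1i}, g_{2i})$ has the property that both $g_{1i}$ and the product $g_{1i}g_{2i}$ are minimal representatives of their respective $H$-double cosets. This reduction is exactly the one performed in the proof of Proposition~\ref{admissiblility of the family}: acting on the left of the pair by $(h_1, \text{id})$ for a suitable $h_1\in H$ replaces $g_1$ by a minimal representative, and a second application of Lemma~\ref{minimal representative lemma} ensures that the product $g_1g_2$ can simultaneously be taken minimal. Crucially this is an action by the $H\times H$-orbit, so it does not change which orbit we are enumerating, and it does not change the value of the quotient either (the quotient depends only on the orbit, since conjugation by the stabilizing group permutes the centralizer subgroups isomorphically).

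Once every summand is in minimal form, I apply Corollary~\ref{quotients are polynomials} directly: each individual quotient equals
\[
\zeta_{g_{1i},g_{2i}}\cdot (k!)^{t_3^{(i)}}\,(n-t_1^{(i)}-t_2^{(i)})(n-t_1^{(i)}-t_2^{(i)}-1)\cdots(n-t_1^{(i)}-t_2^{(i)}-t_3^{(i)}+1),
\]
where $\zeta_{g_{1i},g_{2i}}$ and the integers $t_1^{(i)},t_2^{(i)},t_3^{(i)}$ depend only on the representatives and not on $n$. This is manifestly a polynomial in $n$. A finite sum of polynomials is a polynomial, so $c_{M,N}^L(n)$ is a polynomial in $n$, which finishes the proof.

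I do not expect a genuine obstacle here, since the substantive work has been absorbed into Proposition~\ref{admissiblility of the family} and Corollary~\ref{quotients are polynomials}; the only mild care needed is the verification that the reduction to minimal representatives can be performed orbit-by-orbit without altering the summand, and that the same minimal representatives (and hence the same polynomial expression per orbit) serve for every sufficiently large $n$. Both facts follow from the minimality arguments of Section~4 together with the embedding behaviour recorded in Eq.~\eqref{effect of raising on the graphs}.
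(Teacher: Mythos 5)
Your proposal is correct and follows essentially the same route as the paper: admissibility (Proposition \ref{admissiblility of the family}) plus the centralizer formula of Corollary \ref{abstract formula for structure functions}, a reduction to orbit representatives with $g_{1i}$ and $g_{1i}g_{2i}$ minimal, and then Corollary \ref{quotients are polynomials} applied termwise. The extra care you take in checking that the passage to minimal representatives stays within each $H\times H$-orbit (and hence does not alter the summands) is exactly the point the paper absorbs into the proof of Proposition \ref{admissiblility of the family}.
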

\begin{proof}
Let $M,N$ and $K$ be three $H$-double cosets. Since the family $(S_{kn},B_n)$ is admissible by Proposition \ref{admissiblility of the family}, we may apply Corollary \ref{abstract formula for structure functions}. So the structure coefficient $c_{M,N}^{L}(n)$ is of the following form
\begin{eqnarray}
c_{M,N}^{L}(n) & = & \sum_{i=1}^r\frac{|C_{H_n}(g_{1i}g_{2i})|}{|C_{H_n}(g_{1i}g_{2i})\cap g_{1i}H_ng_{1i}g_{2i}^{-1}|},\nonumber
\end{eqnarray} where $g_{1i}$ and $g_{1i}g_{2i}$ are minimal representatives of their $H$-double cosets.  Since each summand of the RHS is a polynomial in $n$ by Corollary \ref{quotients are polynomials}, so is $c_{M,N}^{L}(n)$.
\end{proof}

\begin{Remark}\label{stability condition remark}
If $[g_1]_{H}\subset [g_1g_2]_{H}$ then $t_3=0$, which implies that the index
\[
\frac{|C_{H_n}(g_1g_2)|}{|C_{H_n}(g_1g_2)\cap g_1H_{n}g_1^{-1}|}
\]
is independent of $n$ and equal to $\zeta_{g_1,g_2}$.
\end{Remark}

\section{The Modified coset type}

We have already seen that the embedding $^{\uparrow t}: S_{kn}  \longrightarrow  S_{kn+kt}$ maps $H_n$ into $H_{n+t}$. If we denote the graph with one vertex and $k(k-1)/2$ loops by $\circ_{k}$, then
\[
G_{g^{\uparrow t}}= G_g \sqcup \underbrace{\circ_{k} \sqcup \cdots \sqcup \circ_{k}}_{\text{$t$ many}}.
\]
This means the attached graph $G_g$ of a finitary permutation $g\in S_{\infty}$ depends on the actual $S_{kn}$ that realizes $g$ as an element of $S_{\infty}$. To remove the dependency we introduce the modified coset type of an element.  Analogous definitions are given in the other works, see \cite[Sec. 3.1]{AC}, and \cite[Sec. 3.1]{WW18}.
\begin{Definition}
Let $g$ be a finitary permutation and assume that $g\in S_{kn}$ for some positive integer $n$. The \textit{modified coset type} of the permutation $g$ is the graph $M_g$ that is obtained from $G_g$ by removing the isolated vertices of $G_g$. The set of isomorphism classes of modified coset types is denoted by $\mc{M}$.
\end{Definition}

From now on a graph $M$ will be denoted as a pair $(V,E)$, where the first (resp. second) component indicates the set of vertices (resp. edges) of $M$. The proof of the next lemma is straightforward.

\begin{Lemma}
\begin{enumerate}
\item Let $g$ be a finitary permutation in $S_{\infty}$. The modified coset type $M_g$ of $g$ is independent of the $S_{kn}$ in which $g$ is realized.
\item The map sending the element $g$ to its modified coset type $M_g$ defines a bijection
\[
H_{\infty}\backslash S_{\infty}/H_{\infty} \longrightarrow \mc{M},
\]
between the $H_{\infty}$-double cosets in $S_{\infty}$ and the set of modified coset types.
\item Let $M=(V,E)$ be a modified type. The group $S_{kn}$ admits an element of $M$ if and only if $|V|\leq n$.
\end{enumerate}
\end{Lemma}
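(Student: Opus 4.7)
The plan is to reduce each of the three assertions to the parameterization of $H_n$-double cosets by the coset-type graphs $G_g$ (Corollary \ref{double coset parameterization}), together with the explicit formula
\[
G_{g^{\uparrow t}} = G_g \sqcup \underbrace{\circ_{k}\sqcup\cdots\sqcup\circ_{k}}_{t\text{ many}}
\]
displayed just before the definition. For part~(1), note that each $\circ_k$ consists of a single vertex forming its own connected component, so by Remark \ref{isolated not in the support} the $t$ new vertices are precisely isolated vertices in the sense used to define $M_g$. Consequently, deleting the isolated vertices of $G_g$ and of $G_{g^{\uparrow t}}$ yields isomorphic graphs. Since any two realizations of a finitary permutation $g$ in symmetric groups $S_{km}$ and $S_{kn}$ with $m\le n$ are related by the embedding $\uparrow^{n-m}$, the modified coset type $M_g$ is independent of the ambient $S_{kn}$.

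For part~(2), I would verify well-definedness, injectivity, and surjectivity separately. If $g' = h_1 g h_2$ with $h_1, h_2 \in H_\infty$, pick $n$ large enough that $g,g',h_1,h_2$ all lie in $S_{kn}$ with $h_1,h_2 \in H_n$; then $g,g'$ are in the same $H_n$-double coset, so by Corollary \ref{double coset parameterization} we have $G_g \cong G_{g'}$ in $\mc{G}_{n,k}$, and removing isolated vertices on each side yields $M_g = M_{g'}$. For injectivity, if $M_g = M_{g'}$, realize both in a common $S_{kn}$; the graphs $G_g$ and $G_{g'}$ each have exactly $n$ vertices, decompose as $M_g \sqcup (\text{isolated vertices})$, and therefore have the same number of isolated vertices, so $G_g \cong G_{g'}$ in $\mc{G}_{n,k}$. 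A second application of Corollary \ref{double coset parameterization} gives $H_n g H_n = H_n g' H_n$, hence the same $H_\infty$-double coset. Surjectivity is tautological from the definition of $\mc{M}$ as the set of isomorphism classes of modified coset types of elements $g \in S_\infty$.

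For part~(3), the forward direction is immediate: any $g \in S_{kn}$ produces a graph $G_g$ on $n$ vertices, of which exactly $|V|$ survive as non-isolated vertices after forming $M_g = M$, so $n \ge |V|$. For the reverse direction, given $M$ with $|V| \le n$, choose any $g \in S_\infty$ with $M_g = M$ and, by Lemma \ref{minimal representative lemma}, replace $g$ by a minimal representative of its $H_\infty$-double coset, so that $\N(g) \subseteq [g]_H$. By Remark \ref{isolated not in the support}, $[g]_H$ is the union of exactly those $\Gamma_i$ corresponding to non-isolated vertices of $G_g$, and hence has cardinality $k|V|$. A suitable product of the transpositions $\tau_{ij} \in \mc{S}_\infty \subseteq H_\infty$ conjugates $g$ to an element whose support is contained in $\Gamma_1\cup\cdots\cup\Gamma_{|V|} = [k|V|]\subseteq [kn]$, producing the desired element of $S_{kn}$ in the same $H_\infty$-double coset, with the same modified type by part~(1).

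I do not foresee any substantial obstacle; each part reduces to bookkeeping on top of results already in place. The one conceptual point to keep straight is the precise meaning of \emph{isolated vertex} supplied by Remark \ref{isolated not in the support}, namely a singleton connected component (possibly carrying loops), which is exactly what each $\circ_k$ contributes under the embedding $\uparrow$; once this identification is noted, all three claims follow mechanically from Corollary \ref{double coset parameterization} and the saturation of the family $(S_{kn}, H_n)$.
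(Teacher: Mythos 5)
Your proof is correct, and since the paper simply declares this lemma straightforward and omits the argument, your write-up supplies exactly the intended verification: reduce everything to Corollary \ref{double coset parameterization} together with the formula $G_{g^{\uparrow t}}=G_g\sqcup\circ_k\sqcup\cdots\sqcup\circ_k$ and the observation that each $\circ_k$ is a singleton connected component in the sense of Remark \ref{isolated not in the support}. No gaps; the only cosmetic point is that the final step of part (3) rests on the well-definedness from part (2) (same double coset implies same modified type) rather than on part (1) alone.
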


Let $M=(V,E)$ be an arbitrary graph. We will denote the number of connected components of $M$ by $w_M$ or $w(M)$ depending on the convenience. The connected components of $M$ can be denoted by $C_i=(V_i,E_i)$, where $i$ suns over the set $\{1,\dots,w_M\}$. Denote the number of vertices of $C_i$ by $c_i$. Without loss of generality we may assume that $c_i$'s are in non-increasing order. As a result we obtain a partition $(c_1,\cdots,c_{w_M})$, which we denote by $\lambda_{M}$. The weight $||M||$ of the graph $M$ is defined by setting
\begin{eqnarray}
||M|| & := & \sum_{i=1}^{w_M}(c_i-1)=|V|-w_M.
\end{eqnarray}
We note that the modification of an arbitrary graph makes sense and for an arbitrary graph $M$ the equality $||{M}^{\circ}||=||M||$ holds.

With this notation we are able to state the main theorem of this article.

\begin{Theorem}[Stability theorem]\label{stability theorem}
Let $M,N$ and $L$ be three modified types. Then
\[
||M||+||N|| < ||L|| \quad \Longrightarrow \quad c_{M,N}^L(n)=0,
\]
and
\[
||M||+||N|| \geq ||L|| \quad \Longrightarrow \quad c_{M,N}^L(n)\geq 0.
\] In case of the equality $||L||=||M||+||N||$, the polynomial $c_{M,N}^L(n)$ is constant, i.e. independent of $n$.
\end{Theorem}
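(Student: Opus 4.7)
The plan is to reduce the stability theorem to a single combinatorial inequality on the weights of the three modified types and then to establish that inequality via the graph evolution analysis. I start by picking orbit representatives $(g_{1,i}, g_{2,i})$ of the $H_{\infty}\times H_{\infty}$-orbits on the fiber set $V(M\times N:g^{H_{\infty}})$ for a minimal representative $g$ of $L$, with $g_{1,i}$ and $g_{1,i}g_{2,i}$ themselves minimal representatives (as arranged in the proof of Proposition \ref{admissiblility of the family}). Applying Corollary \ref{abstract formula for structure functions} and Corollary \ref{quotients are polynomials} then writes
\[
c_{M,N}^L(n)=\sum_{i=1}^r \zeta_{g_{1,i},g_{2,i}}\cdot (k!)^{t_{3,i}}(n-t_{1,i}-t_{2,i})(n-t_{1,i}-t_{2,i}-1)\cdots(n-t_{1,i}-t_{2,i}-t_{3,i}+1),
\]
so the $i$th summand is a polynomial of degree exactly $t_{3,i}:=|[g_{1,i}]_H\setminus [g_{1,i}g_{2,i}]_H|/k$. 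The non-negativity $c_{M,N}^L(n)\geq 0$ is immediate from the fact that the structural coefficients are non-negative integers (Corollary \ref{structure functions calculation as fractions}), so only the vanishing and the $n$-independence claims require work.

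The core claim to be proved is the inequality
\[
t_{3,i}\leq ||M||+||N||-||L||
\]
for every orbit representative. Granting it, the theorem is clean: when $||M||+||N||<||L||$ no non-negative $t_{3,i}$ can satisfy the bound, so the fiber set must be empty and $c_{M,N}^L(n)=0$; when $||M||+||N||=||L||$ every $t_{3,i}$ is forced to be $0$, and Remark \ref{stability condition remark} reduces each summand to the constant $\zeta_{g_{1,i},g_{2,i}}$, making $c_{M,N}^L(n)$ independent of $n$.

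To establish the inequality, let $g_1=g_{1,i}$ and $g=g_{1,i}g_{2,i}$, and note that $|V(M)|=t_{2,i}+t_{3,i}$ and $|V(L)|=t_{1,i}+t_{2,i}$ by the definition of the modified type of a minimal representative. Substituting these and expanding $||\cdot||=|V(\cdot)|-w(\cdot)$ rewrites the inequality as
\[
|V(N)|+w(L)\geq t_{1,i}+w(M)+w(N)+t_{3,i},
\]
a refined subadditivity of the weight that also controls how $[g_1]_H$ exceeds $[g]_H$. I would prove this using the graph evolution construction of Section 6, which realizes $G_g$ by successively applying the disjoint cycles of $g_2=g_1^{-1}g$ to an appropriate modification of $G_{g_1}$; each such step alters the vertex count and the number of components in a controlled way, and summing the local contributions over all cycles of $g_2$ yields the required bound. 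The main obstacle is the bookkeeping of the $t_{3,i}$ "cancellations" — the $\Gamma$-parts in $[g_1]_H$ where $g_1$ mixes $\Gamma$-parts but the product $g_1g_2$ does not — which must be compensated by forced components of $G_{g_2}$; making this accounting precise is the delicate step, and it is exactly what forces the inequality to be tight in the stable case $||L||=||M||+||N||$.
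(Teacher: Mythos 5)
Your overall strategy coincides with the paper's: pass to orbit representatives with $g_{1,i}$ and $g_{1,i}g_{2,i}$ minimal, express $c_{M,N}^L(n)$ through Corollary \ref{abstract formula for structure functions} and Corollary \ref{quotients are polynomials}, and conclude constancy from Remark \ref{stability condition remark} once $t_{3,i}=0$. The problem is that your entire argument rests on the inequality $t_{3,i}\leq ||M||+||N||-||L||$, and you do not prove it; you only assert that ``summing the local contributions over all cycles of $g_2$ yields the required bound'' and concede that making this accounting precise is the delicate step. That delicate step is precisely the content of the paper's Sections 6--7, and it is not optional. Moreover, your description of the mechanism does not match what actually works: the evolution is not obtained by applying the disjoint cycles of $g_2$ to a modification of $G_{g_1}$; it evolves $G_{g_2}$ into $G_{g_1g_2}$ through edge replacement pairs $(E^{g_2}_{C},E^{g_1g_2}_{C})$ indexed by the connected components $C$ of $G_{g_1}$, and the fact that these pairs have equal end-point sets is not automatic --- it requires first normalizing $g_1$ to have closed connected components (Lemma \ref{conneced components are mapped to union of parts}); without that normalization the edge replacement is not even well defined. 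There is also an algebra slip: with $|V(M)|=t_2+t_3$ and $|V(L)|=t_1+t_2$, your inequality rewrites as $|V(N)|+w(L)\geq t_1+w(M)+w(N)$, without the extra $t_{3,i}$ you carry on the right-hand side.

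A second, structural point: you aim for a statement strictly stronger than what the theorem needs. The paper establishes only (i) $||L||\leq ||M||+||N||$ whenever the fiber set is non-empty, via Lemma \ref{weight formula for abstract edge replacement} together with the bound $s(E^{g_2}_{C})\leq |J_C|$ of Lemma \ref{upper bound for the size of an edge set}, and (ii) in the case of equality, the chain of inclusions culminating in $[g_1]_H\subseteq [g_1g_2]_H$, i.e. $t_3=0$, via Corollary \ref{equality implies connected components merge} and the final Proposition of Section 7. Your quantitative bound interpolating between (i) and (ii) would imply both, but it is an additional claim requiring its own argument, and nothing in the proposal supplies it. As written, the proposal is a correct reduction of the theorem to an unproved combinatorial inequality, not a proof.
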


Let $Z$ be the ring of functions $f:\mathbb{Z}\longrightarrow \Z$ and for each modified coset type $X_M$ be an indeterminate which are algebraically independent over $Z$. Introduce the $Z$-algebra $\mc{F}_{\infty,k}:=Z[X_M:M\in \mc{M}_k]$ where
the multiplication of the indeterminates are defined as
follows.  \[
X_M\cdot X_N=\sum_{\substack{L\in \mc{M} \\ ||L||=||M||+||N||}}c_{M,N}^L(n)X_L.
\]
By the first assertion of the stability theorem the sum is finite and hence make sense. The stability theorem then reads as $\mc{F}_{\infty,k}$ is an associative filtered algebra whose structure coefficients are non-negative integers.

\section{Discrete graph evolution}

Throughout this section we fix an abstract set $V$ of vertices $v_1,\dots,v_n$. Let $M=(V,E)$ be an arbitrary graph. The connected components of $M$ are denoted by $C_i=(V_i,E_i)$, for $i=1,\dots,w_M$. Let $e$ be an edge of $M$. The vertices that the edge $e$ joins are called the \textit{end-points} of the edge $e$. The set of end-points of $e$ is denoted by $V(e)$. The definition of end-points directly generalizes to arbitrary subsets of the edge set $E$. If $D$ is a subset of $E$ then the set of end-points of $D$ is denoted by $V(D)$. The graph on the set of vertices $V(D)$ whose edge set is $D$ is denoted by $G(D)$. If $v$ is a vertex of $M$ then the connected component that contains the vertex $v$ is denoted by $C_M(v)$. In general, if $W$ is a subset of $V$ then the graph $C_M(W)$ is the union
\[
C_M(W)=\bigcup_{v\in W}C_M(v).
\]
The \textit{relative size} $s_M(W)$ of $W$ relative to $M$ is defined as the positive integer
\[
s_M(W)=\#\{i\in \N \mid V_i\cap W\neq \emptyset\}=w(C_M(W)),
\]
the minimal number of connected components of $M$, whose union contains the vertex set $W$.
Let $D$ be a set of edges of the graph $M$ and $G(D)$ denote the sub-graph of $M$ whose vertex set is $V(D)$. The \textit{relative size} $s_M(D)$ of $D$ relative to $M$ is defined as the positive integer
\[
s_M(D)=\#\{i\in \N\mid E_i\cap D\neq \emptyset\}=w(C_M(V(D)))=s_M(V(D)),
\] the relative size the of the set of end-points of $D$. The \textit{non-relative size} $s(D)$ of $D$ is defined as the positive integer
\[
s(D)=w(G(D)),
\]
the number of connected components of the sub-graph $G(D)$.
Note that the non-relative size of an edge set implicitly assumes the existence of an ambient graph. However, this does not provide any problem as the sub-graph $G(D)$ is completely determined by the edge set $D$ alone. We note the the relative size of an edge set is bounded by its non-relative size. That is
\[
s_M(D)\leq s(D).
\] This inequality is a consequence of the fact that vertices that are connected by an edge in $D$ are also connected in the ambient graph $M$. Finally we note that the non-relative site is a sub-additive function in the following sense: If $E_1$ and $E_2$ are two edge sets then
\begin{equation}\label{s is subadditive}
s(E_1\sqcup E_2)\leq s(E_1)+s(E_2),
\end{equation}where the equality does not necessarily hold, even if the edge sets $E_1$ and $E_2$ are disjoint.

\begin{Example}

Consider the graph $M=G_g$ attached to $g=(1,8,18,6,10,13,2,11,3,12)(4,16)(5,17)$, where $k=3$, and $n=7$.

\begin{tikzpicture}

\node[circle, fill=red!80] (n1) at (0,0) {$v_{1}$};
\node[circle, fill=red!80] (n2) at (3,2)  {$v_{2}$};
\node[circle, fill=red!80] (n3) at (6,0)  {$v_{3}$};
\node[circle, fill=red!80] (n4) at (9,0) {$v_{4}$};
\node[circle, fill=red!80] (n5) at (9,2)  {$v_{5}$};
\node[circle, fill=red!80] (n6) at (13,1)  {$v_{6}$};
%\node[draw=none,fill=none] (n7) at (2,3) {$C_1$};
%\node[draw=none,fill=none] (n7) at (8,3) {$C_2$};
%\node[draw=none,fill=none] (n7) at (14,2) {$C_3$};

\draw[blue,line width=1.5pt] (n1) .. controls (-1,-1) and (0,-1) .. (n1) node[pos=.5,sloped,below left] {$e^{x}_{2,3}$};
\draw[blue,line width=1.5pt] (n2) .. controls (2,3) and (4,4) .. (n2)node[pos=.9, right] {$e^{x}_{4,5}$};

\draw[blue,line width=1.5pt] (n3) .. controls (6,-1) and (7,-1) .. (n3) node[pos=.5,below right] {$e^{x}_{9,8}$};

\draw[blue,line width=1.5pt] (n1) .. controls (2,2) .. (n2) node[pos=.5,sloped,above left] {$e^{x}_{1,3}$};
\draw[blue,line width=1.5pt] (n1) .. controls (2,1) .. (n2) node[pos=.5,sloped,below right] {$e^{x}_{2,1}$};

\draw[blue,line width=1.5pt] (n2) .. controls (4,2) .. (n3) node[pos=.5,sloped,above right] {$e^{x}_{4,6}$};
\draw[blue,line width=1.5pt] (n2) .. controls (4,1) .. (n3)node[pos=.5,sloped,below left] {$e^{x}_{5,6}$};

\draw[blue,line width=1.5pt] (n1) .. controls (3,0) .. (n3) node[pos=.5,sloped,below] {$e^{x}_{7,8}$};
\draw[blue,line width=1.5pt] (n3) .. controls (3,-1) .. (n1)node[pos=.5,sloped,below] {$e^{x}_{7,9}$};

\draw[blue,line width=1.5pt] (n4) .. controls (9.5,-1) and (8.5,-1) .. (n4)node[pos=.5,sloped,below] {$e^{x}_{13,12}$};
\draw[blue,line width=1.5pt] (n5) .. controls (8,3) and (10,4) .. (n5)node[pos=.9, right] {$e^{x}_{10,14}$};

\draw[blue,line width=1.5pt] (n4) .. controls (8.5,1) .. (n5)node[pos=.9, sloped,above left  ] {$e^{x}_{13,15}$};
\draw[blue,line width=1.5pt] (n4) .. controls (9.5,1) .. (n5)node[pos=.9, sloped,above right  ] {$e^{x}_{13,14}$};

\draw[blue,line width=1.5pt] (n4) .. controls (6.5,1) .. (n5)node[pos=.9, sloped,above left  ] {$e^{x}_{10,12}$};
\draw[blue,line width=1.5pt] (n4) .. controls (11.5,1) .. (n5)node[pos=.9, sloped,above right  ] {$e^{x}_{10,11}$};

\draw[blue,line width=1.5pt] (n6) .. controls (12.5,0) and (13.5,0) .. (n6)node[pos=.5,sloped,below] {$e^{x}_{16,17}$};
\draw[blue,line width=1.5pt] (n6) .. controls (14,0.5) and (14,1.5) .. (n6)node[pos=.3,right ] {$e^{x}_{17,18}$};
\draw[blue,line width=1.5pt] (n6) .. controls (12,0.5) and (12,1.5) .. (n6)node[pos=.5,below ] {$e^{x}_{18,19}$};

\end{tikzpicture} We denote the connected components of $M$ from left to right by $C_1,C_2$ and $C_3$. If $W=\{v_1,v_2,v_3,v_5\}$ then the graph $C_M(W)$ is the disjoint union of $C_1$ and $C_2$, because $C_M(v_i)=C_1$ for $i=1,2,3$, and $C_M(v_5)=C_2$. This also means that the relative size $s_M(W)$ of $W$ with respect to $M$ is equal to $2$. Let $D$ be the edge set $\{e^g_{7,8}, e^g_{2,3}, e^g_{10,14}\}$. Then the set of end-points of $D$ is equal to $W$, hence $s_M(D)=2$.
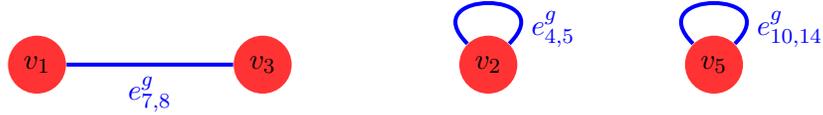
\begin{figure}[h]
\centering

\begin{tikzpicture}

\node[draw=none,fill=none] (n0) at (0,0) {};
\node[circle, fill=red!80] (n1) at (3,0) {$v_{1}$};
\node[circle, fill=red!80] (n2) at (9,0)  {$v_{2}$};
\node[circle, fill=red!80] (n3) at (6,0)  {$v_{3}$};
\node[circle, fill=red!80] (n5) at (12,0)  {$v_{5}$};

\draw[blue,line width=1.5pt] (n2) .. controls (8,1) and (10,1) .. (n2)node[pos=.9, right] {$e^{g}_{4,5}$};

\draw[blue,line width=1.5pt] (n1) .. controls (4.5,0) .. (n3) node[pos=.5,sloped,below] {$e^{g}_{7,8}$};

\draw[blue,line width=1.5pt] (n5) .. controls (11,1) and (13,1) .. (n5)node[pos=.9, right] {$e^{g}_{10,14}$};
\end{tikzpicture}
\caption{The graph $G(D)$ where $D=\{e^g_{7,8}, e^g_{2,3}, e^g_{10,14}\}$.}
\label{fig:my_label}
\end{figure} Number of connected components of $G(D)$ is $3$. As a result, the non-relative size $s(D)$ of the edge set $D$ is $3$, which is an example of the strict inequality $s_M(D)<s(D)$.

\end{Example}

\begin{Definition}\label{edge replacement definition}
Let $M_0=(V,E_0)$ and $M_1=(V,E_1)$ be two graphs on the set of vertices $V$. The graph $M_1$ is \textit{evolved from} $M_0$ \textit{through the edge replacement pairs} $(E_{0i},E_{1i})_{i=1}^t$, if the collection $E_{01},\dots,E_{0t}$ defines a set partition of the vertex set $E_0$ of $M_0$, and the collection $E_{11},\dots,E_{1t}$ defines a set-partition of the edge set $E_1$ of $M_1$ such that
\[
V(E_{0i})=V(E_{1i})
\]
for all $i=1,\dots,t$. The sequence graphs
\[
M_0=G_0,\cdots, G_t=M_1,
\]
is called the \textit{evolution chain}, where the $G_i$ is obtained from $G_{i-1}$ by replacing the edges $E_{0i}$ by $E_{1i}$, for $i=1,\dots,t$. The number $t$ is called the \textit{length} of the evolution.
\end{Definition}

The change in the edge set $E_{G_{i}}$ of the graph $G_i$ as $i$
increases can be depicted as follows:
\begin{eqnarray}
E_{G_{i-1}} & = & E_{11}\sqcup \cdots \sqcup E_{1{(i-1)}}\sqcup E_{0{i}} \sqcup E_{0{(i+1)}} \sqcup \cdots \sqcup E_{0t},\nonumber\\
E_{G_{i}} & = & E_{11}\sqcup \cdots \sqcup E_{1(i-1)}\sqcup E_{1{i}} \sqcup E_{0{(i+1)}} \sqcup \cdots \sqcup E_{0t}.
\end{eqnarray}
\begin{Lemma}\label{weight formula for abstract edge replacement}
Let $M_0$ and $M_1$ be two graphs and assume that $M_1$ is evolved from $M_0$ through the edge replacement pairs $(E_{0i},E_{1i})_{i=1}^t$. The weight of the graphs in the evolution chain obey the inequality
\begin{equation}\label{weight og H-i and H-i+1}
||G_{i}||\leq ||G_{i-1}||+(s(E_{0i})-1).
\end{equation} In particular
\[
||M_1||\leq ||M_0||+\big (\sum_{j=1}^t s(E_{0i}) \big )-t.
\]
\end{Lemma}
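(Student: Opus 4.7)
The plan is to prove the one-step inequality $||G_i||\leq ||G_{i-1}||+(s(E_{0i})-1)$; the bound on $||M_1||$ then follows by telescoping over $i=1,\dots,t$. Since the vertex set $V$ is fixed along the entire evolution chain and $||G||=|V|-w(G)$, the one-step inequality is equivalent to
\[
w(G_{i-1})-w(G_i)\leq s(E_{0i})-1.
\]

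First I would isolate the common ``backbone'' $M^{*}$, defined as the graph on $V$ obtained by removing the edges $E_{0i}$ from $G_{i-1}$; the compatibility assumption $V(E_{0i})=V(E_{1i})$ guarantees that $M^{*}$ also coincides with $G_i$ after deleting $E_{1i}$. Writing $W:=V(E_{0i})=V(E_{1i})$, the key observation is that adjoining a set of edges whose endpoints all lie in $W$ can merge only those components of $M^{*}$ that already meet $W$. Setting
\begin{align*}
a  &:= \#\{\text{components of } M^{*}\text{ meeting }W\},\\
b  &:= \#\{\text{components of } G_{i-1}\text{ meeting }W\},\\
b' &:= \#\{\text{components of } G_i\text{ meeting }W\},
\end{align*}
a direct count gives $w(G_{i-1})=w(M^{*})-(a-b)$ and $w(G_i)=w(M^{*})-(a-b')$, whence $||G_i||-||G_{i-1}||=b-b'$.

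The step I expect to be the crux is bounding $b$ from above, and this is where the non-relative size $s(E_{0i})$ enters the picture. Two endpoints that are connected by a path inside the subgraph $G(E_{0i})$ remain connected in any larger graph that contains $E_{0i}$ among its edges; in particular they lie in the same component of $G_{i-1}=M^{*}\cup E_{0i}$. Consequently the partition of $W$ induced by the components of $G_{i-1}$ is coarser than the one induced by $G(E_{0i})$, giving $b\leq w(G(E_{0i}))=s(E_{0i})$. Combined with the trivial lower bound $b'\geq 1$ (valid whenever $E_{1i}\neq\emptyset$, which is the only non-degenerate case), we conclude that $b-b'\leq s(E_{0i})-1$, proving the one-step inequality.

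Summing the one-step bound over $i=1,\ldots,t$ telescopes to
\[
||M_1||\leq ||M_0||+\sum_{i=1}^{t}(s(E_{0i})-1)=||M_0||+\Big(\sum_{i=1}^{t}s(E_{0i})\Big)-t,
\]
which is the ``in particular'' statement.
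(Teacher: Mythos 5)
Your proof is correct and follows essentially the same route as the paper's: both reduce the one-step inequality to showing that the number of components of $G_{i-1}$ meeting $W=V(E_{0i})$ (your $b$, the paper's $l=s_{G_{i-1}}(E_{0i})$) is at most $s(E_{0i})$ while at least one component survives the replacement (your $b'\geq 1$, the paper's $w(K_i)\geq 1$), and then telescope. Your intermediate backbone graph $M^{*}$ is only a bookkeeping variant of the paper's decomposition $G_{i-1}=C_{G_{i-1}}(W_i)\sqcup C_{l+1}\sqcup\cdots\sqcup C_r$.
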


\begin{proof}
Let $i$ be fixed positive integer in $[t]$, and denote the connected components of $G_{i-1}$ by $C_j=(V_j,E_j)$, for $j=1,\dots,w$. By $W_i$ we mean the end-point sets $V(E_{0i})$ and $V(E_{1i})$, which are equal by assumption. Some of the connected components of $G_{i-1}$ contains vertices contained in $W_i$. Without loss of generality we may assume that $C_1,\cdots,C_l$ are the connected components that contains an element in $W_i$. In other words, the graph $C_{H_{i-1}}(W_i)$ is equal to union of the graphs $C_1,\dots,C_l$. Hence we deduce
\begin{equation}\label{l is less than s-E-oi}
l=s_{G_{i-1}}(W_i)=s_{G_{i-1}}(E_{0i})\leq s(E_{0i}).
\end{equation}
The graph $G_{i-1}$ can be written as a disjoint union of graphs
\begin{equation}\label{G-i-1 decomposition}
G_{i-1}= C_{G_{i-1}}(W_i) \sqcup C_{l+1}\sqcup \cdots \sqcup C_r.
\end{equation}
Since the end-points of $E_{0i}$ and $E_{1i}$ are all contained in $W_i$, the process of passing from $G_{i-1}$ to $G_i$ only effects the graph $C_{G_{i-1}}(W_i).$ Thus the graph $G_i$ admits the decomposition
\begin{equation}\label{G-i decomposition}
G_i=K_i\sqcup C_{l+1} \sqcup \cdots \sqcup C_r,    
\end{equation}
where $K_i$ is the graph obtained from $C_{G_{i-1}}(W_i)$ by replacing the edges $E_{0i}$ with the edges $E_{1i}$. The decomposition in Eq.\eqref{G-i decomposition} implies
\[
||G_i||=||K_i||+||C_{l+1}||+\cdots+ ||C_t||.
\]
The vertex set $V_{K_i}$ of the graph $K_i$ is equal to the vertex set of $C_{G_{i-1}}(W_i)$. Thus the definition of graph weight yields
\[
\Big|||C_{G_{i-1}}(W_i)||-||K_i||\Big|=\big|w(C_{G_{i-1}}(W_i))-w(K_i)\big|=\big|l-w(K_i)\big|.
\]
Using this equality and combining Eqs. \eqref{G-i-1 decomposition} and \eqref{G-i decomposition} we deduce  
\begin{equation}
\Big|||G_i||-||G_{i-1}||\Big|
= \big|l-w(K_i)\big|
\leq l-1
\leq  s_{E_{0i}}-1\label{s is an upper bound}.
\end{equation}
The first inequality follows because $w(C_{H_{i-1}}(W_i))=1$ and $w(K_i)$ is bounded below by $1$, where the second inequality is a consequence of Eq.\eqref{l is less than s-E-oi}.
\end{proof}

Next Lemma will play a crucial role in the proof of the stability of the family $(S_{kn},H_n)_{n\in \N_+}$.

\begin{Corollary}\label{equality implies connected components merge}
Let $M_0$ and $M_1$ be two graphs and assume that $M_1$ is evolved from $M_0$ through the edge replacement pairs $(E_{0i},E_{1i})_{i=1}^t$. If the inequality
\begin{equation}\label{equality implies connected components merge equation}
||M_1||\leq ||M_0||+\big (\sum_{j=1}^t s(E_{0i}) \big )-t    
\end{equation}
is an equality then the connected components of $M_0$ remain connected in $M_1$.
\end{Corollary}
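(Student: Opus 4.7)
The plan is to unpack the proof of Lemma \ref{weight formula for abstract edge replacement} to identify exactly when equality is attained at each step of the evolution chain
\[
M_0 = G_0, G_1, \ldots, G_t = M_1,
\]
and then translate the resulting per-step condition into the desired structural statement about connected components.

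First I would extract from the proof of the preceding lemma the sharper (signed) identity
\[
||G_i|| - ||G_{i-1}|| = l_i - w(K_i),
\]
where $l_i := s_{G_{i-1}}(W_i)$ for $W_i = V(E_{0i}) = V(E_{1i})$, and $K_i$ denotes the graph on the vertex set $V(C_{G_{i-1}}(W_i))$ obtained by replacing $E_{0i}$ with $E_{1i}$ inside $C_{G_{i-1}}(W_i)$. Combined with the two bounds $l_i \leq s(E_{0i})$ and $w(K_i) \geq 1$, this yields $||G_i|| - ||G_{i-1}|| \leq s(E_{0i}) - 1$, and summing over $i$ reproduces the bound of \eqref{equality implies connected components merge equation}. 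Consequently, if that bound is in fact an equality, equality must hold at every step, forcing simultaneously $l_i = s(E_{0i})$ and, crucially, $w(K_i) = 1$ for each $i = 1, \ldots, t$. In other words, every intermediate graph $K_i$ is connected.

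Next I would finish by a short induction on $i$, showing that each connected component of $G_{i-1}$ lies inside a single connected component of $G_i$. A component $C$ of $G_{i-1}$ disjoint from $W_i$ is untouched by the replacement and reappears verbatim as a component of $G_i$. A component $C$ of $G_{i-1}$ that meets $W_i$ is one of $C_1, \ldots, C_{l_i}$, so $V(C) \subseteq V(K_i)$; since $K_i$ is connected and embeds as a subgraph of $G_i$ on exactly that vertex set, $C$ sits inside the unique component of $G_i$ containing $K_i$. Composing these inclusions across $i = 1, \ldots, t$ gives that every component of $M_0$ is contained in a single component of $M_1$, which is precisely the assertion of the corollary.

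The main work has already been carried out inside the previous lemma, so I do not expect a serious obstacle; the most subtle point is to recognize that the absolute-value bound displayed in the proof of that lemma actually comes from the signed identity above, and that the global equality hypothesis pins the sign to the positive side at every step, making the uniform conclusion $w(K_i) = 1$ available for the inductive argument.
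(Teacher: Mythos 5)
Your proposal is correct and follows essentially the same route as the paper's proof: both reduce to the per-step analysis inside Lemma \ref{weight formula for abstract edge replacement}, observe that global equality in \eqref{equality implies connected components merge equation} forces equality at every step and hence $w(K_i)=1$, and conclude that the components $C_1,\dots,C_l$ meeting $W_i$ merge inside the connected graph $K_i$ while the remaining components are untouched. Your explicit extraction of the signed identity $||G_i||-||G_{i-1}||=l_i-w(K_i)$ is a welcome clarification of the absolute-value bound displayed in the lemma's proof, but it does not constitute a different argument.
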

\begin{proof}
Our proof relies on the proof of Lemma \ref{weight formula for abstract edge replacement}. By Eq.\eqref{G-i-1 decomposition} and Eq.\eqref{G-i decomposition} the connected components $C_{l+1},\dots, C_r$ of the graph $G_{i-1}$ remain connected in $G_i$. So we need to prove that the connected components $C_1,\dots,C_l$ of $G_{i-1}$ remain connected in $G_i$. The vertex set of the graph $K_i$ is equal to the union of the connected components $C_1,\dots,C_l$. So it suffices to prove that $K_i$ is connected.
Assume that \eqref{equality implies connected components merge equation} is an equality. Thus Eq.\eqref{s is an upper bound} is an equality, for every $i=1,\dots,t$. In particular $w(K_i)=1$, for every $i=1,\dots,t$, as a result $K_i$ is connected.
\end{proof}

In the next section we will use Lemma \ref{weight formula for abstract edge replacement} to show that the Hecke algebras $\mc{H}_n$ admits a filtration. In the course of implementing Lemma \ref{weight formula for abstract edge replacement} we will need some more terminology concerning the attached graphs, which we will introduce now. So we turn our attention to the graphs of the form $G_g$ for $g\in S_{kn}$.
Recall that the graph $G_g=(V,E_g)$ is defined as follows. The vertex set is by
definition $V=\{v_1,\cdots,v_n\}$ and
\begin{equation}\label{definition of E-x}
E_g:=\bigsqcup_{u=1}^l E_{g,u}
\end{equation}
where
\begin{equation}\label{definition of E-x-i}
E^g_{u}:=\{e^g_{r,s}=e^g_{s,r}|r,s\in \Gamma_u,\:r\neq s\}.
\end{equation}
The end-points of the edge $e^g_{r,s}=e^g_{s,r}$ are $v_{p(g^{-1}(r))}$ and $v_{p(g^{-1}(s))}$. Thus we may write
\begin{equation}\label{definition of e-x-r-s}
e^g_{r,s}=\{\{v_{p(g^{-1}(r))},r \}, \{v_{p(g^{-1}(s))},s \}\}.
\end{equation}
The edges $e^g_{r,s}$ and $e^g_{r',s'}$ are equal if and only if $\{r,s\}=\{r',s'\}$. Note that $p(g^{-1}(r))=i$ just means that $g^{-1}(r)\in \Gamma_i$. A vertex $v_i$ is an end-point of $e^g_{r,s}$ if and only if
$g^{-1}(r)\in \Gamma_i$ or $g^{-1}(s)\in \Gamma_i$.

\begin{Lemma}\label{vertex set of Gamma-l}
Let $g$ be an arbitrary permutation in $S_{kn}$ and $u$ be an element of $[n]$. The set of end-points $V(E^g_{u})$ of the edge set $E^g_{u}$ is given as follows:
\[
V(E^g_{u})=\{v_i\in V:\Gamma_i\cap g^{-1}(\Gamma_u)\neq \emptyset \}.
\]
More generally, if $J\subset \{1,\dots,n\}$ then
\[
V(\bigcup_{u\in J}E^g_{u})=\{v_i\in V:\Gamma_i\cap g^{-1}(\bigcup_{u\in J}\Gamma_{u})\neq \emptyset \}
. \]
\end{Lemma}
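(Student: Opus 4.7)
The plan is to prove both equalities by double inclusion, using directly the explicit formula for the endpoints of an edge, namely that $e^g_{r,s}=\{\{v_{p(g^{-1}(r))},r\},\{v_{p(g^{-1}(s))},s\}\}$ (Eq.\eqref{definition of e-x-r-s}). The key observation is that the standing assumption $k\geq 2$ guarantees $|\Gamma_u|\geq 2$, so that each element of $\Gamma_u$ can be paired with another to actually form an edge.

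For the first equality, the forward inclusion is immediate from the definitions: if $v_i$ is an endpoint of some $e^g_{r,s}\in E^g_u$ (so $r,s\in\Gamma_u$, $r\neq s$), then either $p(g^{-1}(r))=i$ or $p(g^{-1}(s))=i$, and whichever holds, $g^{-1}$ sends an element of $\Gamma_u$ into $\Gamma_i$, witnessing that $\Gamma_i\cap g^{-1}(\Gamma_u)\neq\emptyset$. For the reverse inclusion, suppose $\Gamma_i\cap g^{-1}(\Gamma_u)\neq\emptyset$ and pick $t$ in this intersection; set $r:=g(t)\in\Gamma_u$. Because $k\geq 2$ there exists $s\in\Gamma_u$ with $s\neq r$, and then $e^g_{r,s}\in E^g_u$ has $v_{p(g^{-1}(r))}=v_{p(t)}=v_i$ among its two endpoints, so $v_i\in V(E^g_u)$.

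For the more general statement, I would first note the elementary identity that the endpoint set of a union of edge sets is the union of the endpoint sets, i.e.\ $V\bigl(\bigcup_{u\in J}E^g_u\bigr)=\bigcup_{u\in J}V(E^g_u)$. Applying the first part of the lemma to each $u\in J$ and then using that $\Gamma_i\cap g^{-1}(\Gamma_u)\neq\emptyset$ for some $u\in J$ is equivalent to $\Gamma_i\cap g^{-1}\bigl(\bigcup_{u\in J}\Gamma_u\bigr)\neq\emptyset$ (because $g^{-1}$ commutes with unions) yields the second formula.

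There is essentially no genuine obstacle here; the only thing to keep an eye on is the reverse inclusion in the first equality, which would fail for $k=1$ since in that case $\Gamma_u$ is a singleton and $E^g_u$ is empty. Under the paper's blanket hypothesis $k\geq 2$ this issue disappears, and the proof reduces to unwinding the definitions.
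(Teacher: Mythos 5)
Your proof is correct and is exactly the argument the paper has in mind: the paper's own proof is the one-line ``Directly follows from the definitions,'' and your write-up simply unwinds Eq.~\eqref{definition of e-x-r-s} by double inclusion. Your observation that the reverse inclusion uses $k\geq 2$ (so that $E^g_u$ is nonempty) is a worthwhile detail the paper leaves implicit.
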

\begin{proof}
Directly follows from the definitions.
\end{proof}

\begin{Lemma}
Let $g$ be an arbitrary permutation in $S_{kn}$ and $C$ be a connected component of $G_g$. Let $J_C$ be the set of indices $u$ in $\{1,\dots,n\}$ such that the vertex $v_u$ is contained in $C$. Then there exists a unique subset $g(J_C)$ of $\{1,\dots,n\}$ such that
\begin{equation}
g(\bigcup_{u\in J_C}\Gamma_{u})=\bigcup_{u'\in g(J_C)}\Gamma_{u'}
\end{equation}
\end{Lemma}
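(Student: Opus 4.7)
The plan is to show that the image $g\bigl(\bigcup_{u\in J_C}\Gamma_u\bigr)$ is closed under the partner relation $p(\cdot)=p(\cdot)$. Once that is established, since the sets $\Gamma_1,\dots,\Gamma_n$ form a partition of $[kn]$, any subset of $[kn]$ closed under the partner relation is automatically a union of $\Gamma$-parts, and the index set of those parts is uniquely determined. This will immediately yield both the existence and the uniqueness of the desired subset $g(J_C)\subseteq\{1,\dots,n\}$.

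For the saturation step, I will pick an arbitrary $r\in g\bigl(\bigcup_{u\in J_C}\Gamma_u\bigr)$, write $g^{-1}(r)\in\Gamma_i$ for some $i\in J_C$, and let $s\in[kn]$ be a partner of $r$, so that $p(r)=p(s)=u$ for a single $u\in[n]$. The case $s=r$ is immediate. If $s\neq r$, the pair $\{r,s\}$ is an admissible pair in $\Gamma_u$, so by Definition \ref{graph of an element} there is an edge $e^g_{r,s}$ of $G_g$ joining $v_i=v_{p(g^{-1}(r))}$ to $v_j:=v_{p(g^{-1}(s))}$. Because $v_i\in C$ and $C$ is a connected component of $G_g$, this edge forces $v_j\in C$, hence $j\in J_C$; therefore $g^{-1}(s)\in\Gamma_j\subseteq\bigcup_{u\in J_C}\Gamma_u$, i.e.\ $s\in g\bigl(\bigcup_{u\in J_C}\Gamma_u\bigr)$, as required.

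I do not anticipate any serious obstacle here; the statement is essentially a direct translation of the defining property of the edges of $G_g$ into partition-preservation language. The only minor point that needs attention is the trivial case $r=s$, which must be dispatched separately because edges $e^g_{r,s}$ are defined only for $r\neq s$. Everything else is a straightforward appeal to the definition of $G_g$ and to the fact that being in a common connected component is a transitive relation on vertices.
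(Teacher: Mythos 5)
Your proposal is correct and follows essentially the same route as the paper's own proof: both show that the image $g\bigl(\bigcup_{u\in J_C}\Gamma_u\bigr)$ is closed under the partner relation by observing that a partner pair $\{r,s\}$ gives an edge $e^g_{r,s}$ of $G_g$ whose endpoints must both lie in the connected component $C$. Your write-up is in fact slightly more careful than the paper's, since you explicitly dispatch the degenerate case $r=s$ and spell out why partner-closedness yields a unique index set.
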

\begin{proof}
We first observe that $J_C=\{u\in [n]\mid v_u\in C\}$. Now let us assume that $r$ is an element of $g(\cup_{u\in J}\Gamma_{u})$. Then $p(g^{-1}(r))\in J$ and hence the vertex $v_{p(g^{-1}(r))}$ is an element of the connected component $C$. We shall show that partners of $r$ also contained in the set $g(\cup_{u\in J}\Gamma_{u})$. Let $s$ be a partner of $r$. Then there is an edge $e^g_{r,s}$ that joins the vertices $v_{p(g^{-1}(r))}$ and $v_{p(g^{-1}(s))}$. Since $C$ is a connected component, the vertex $v_{p(g^{-1}(s))}$ is an element of $J_C$. From this we conclude that $p(g^{-1}(s))\in J$. In other words, $s\in g(\cup_{u\in J_C}\Gamma_{u})$.
\end{proof}

\begin{Lemma}\label{conneced components are mapped to union of parts}
In every $H$-double coset one can find a minimal representative $g$ such that $g(J_C)=J_C$ for every connected component $C$ of the graph $G_g$. Such an element will be referred as an element with \textit{closed connected components}.
\end{Lemma}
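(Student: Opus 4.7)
The plan is to start with an arbitrary minimal representative $g_{0}$ of the given $H$-double coset (which exists by Lemma~\ref{minimal representative lemma}) and to produce a new minimal representative $g^{*}\in Hg_{0}H$ with closed connected components. The two main ingredients will be (i) a single left multiplication by an element $h\in\mathcal{S}_{n}\subset H$ that realigns each image set $g_{0}(J_{C})$ back to $J_{C}$, and (ii) the observation that the subsequent corrections needed to re-establish minimality can all be taken inside a single $\Gamma$-part, and therefore do not disturb the newly-achieved closedness.

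For each connected component $C$ of $G_{g_{0}}$, the preceding lemma gives $g_{0}(\bigcup_{u\in J_{C}}\Gamma_{u})=\bigcup_{u'\in g_{0}(J_{C})}\Gamma_{u'}$, so $|g_{0}(J_{C})|=|J_{C}|$ and the two families $\{J_{C}\}_{C}$ and $\{g_{0}(J_{C})\}_{C}$ are both partitions of $[n]$. For an isolated vertex $v_{i}$ the minimality of $g_{0}$ forces $g_{0}(\Gamma_{i})=\Gamma_{i}$, hence $g_{0}(\{i\})=\{i\}$. I will define a permutation $\pi\in S_{n}$ by choosing, on each non-trivial component $C$, an arbitrary bijection $g_{0}(J_{C})\to J_{C}$ and by setting $\pi(i)=i$ on every isolated vertex; the pieces match up into a single element of $S_{n}$ because both families partition $[n]$ with matching block sizes. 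Lift $\pi$ to $h\in\mathcal{S}_{n}\subset H$ via Remark~\ref{tau-ij remark} and set $g_{1}:=hg_{0}$. A direct comparison of the end-points of $e^{g_{1}}_{r,s}$ and $e^{g_{0}}_{h^{-1}(r),h^{-1}(s)}$ shows that left multiplication by any element of $H$ leaves the underlying unlabelled graph of $g_{0}$ unchanged, so $G_{g_{1}}$ has the same connected-component partition as $G_{g_{0}}$. By the choice of $\pi$ one then has
\[
g_{1}\bigl(\bigcup_{u\in J_{C}}\Gamma_{u}\bigr) = h\bigl(\bigcup_{u'\in g_{0}(J_{C})}\Gamma_{u'}\bigr) = \bigcup_{u\in \pi(g_{0}(J_{C}))}\Gamma_{u} = \bigcup_{u\in J_{C}}\Gamma_{u},
\]
so $g_{1}(J_{C})=J_{C}$ for every component $C$.

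A case analysis on whether $v_{i}$ is isolated will show that $g_{1}$ already satisfies property~(a) of Definition~\ref{minimal representative definition}: on an isolated $\Gamma_{i}$ both $g_{0}$ and $h$ act as the identity (using $\pi(i)=i$ and the monotone nature of the lift), while on a non-isolated $\Gamma_{i}$ the set $g_{0}(\Gamma_{i})$ meets several $\Gamma$-parts and applying $h$ preserves this. Property~(b) may however fail, and I will then apply the correction procedure from the proof of Lemma~\ref{minimal representative lemma}: each step multiplies on the left by a transposition $(r,s)$ with $p(r)=p(s)$, which lies in $\mathcal{Y}_{n}\subset H$. Such a transposition still preserves the underlying graph; and because $(r,s)$ acts within the single $\Gamma$-part $\Gamma_{p(s)}$, it preserves $\bigcup_{u\in J_{C}}\Gamma_{u}$ set-wise for every component $C$, so combined with the closedness of $g_{1}$ it yields the closedness of $(r,s)g_{1}$. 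The procedure terminates by Lemma~\ref{minimal representative lemma}, yielding the desired minimal representative $g^{*}\in Hg_{0}H$ with $g^{*}(J_{C})=J_{C}$ for every component $C$ of $G_{g^{*}}=G_{g_{0}}$.

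The main obstacle is keeping minimality and closedness under simultaneous control. Its resolution lies in the rigid structure of the minimality corrections: every such correction is a transposition inside a single $\Gamma$-part, and therefore commutes set-wise with the partition $\{\bigcup_{u\in J_{C}}\Gamma_{u}\}_{C}$ of $[kn]$, automatically preserving the closedness established in the first stage.
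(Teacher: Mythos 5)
Your proof is correct and follows essentially the same route as the paper's: left-multiply a minimal representative by an element of $H$ that realigns each block $\bigcup_{u'\in g(J_C)}\Gamma_{u'}$ back onto $\bigcup_{u\in J_C}\Gamma_u$, then restore property (b) of minimality by Young-subgroup corrections. You simply make explicit two points the paper leaves implicit --- the construction of $h$ as the lift of a block permutation $\pi\in S_n$ and the verification that left multiplication by elements of $H$ (in particular the $\Gamma$-part transpositions) preserves both the graph and the closedness --- which is a welcome amplification rather than a different argument.
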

\begin{proof}
Let $g$ be a minimal element. There is an element $h$ of $H$ that maps
\begin{equation}
\bigcup_{u'\in g(J_C)}\Gamma_{u'}\longrightarrow \bigcup_{u\in J_C}\Gamma_{u}
\end{equation}
bijectively. Up to multiplication with an element of the Young subgroup $S_{\Gamma_1}\times\cdots\times S_{\Gamma_n}$ one may assume $hg(r)=r$ whenever $p(hg(r))=p(r)$. The element $hg$ is minimal and satisfies the claim of the lemma.
\end{proof}

\begin{Remark}\label{equalities for the closed connected components remark}
Let $g$ be an element with closed connected components. We note that the equalities
\begin{equation}\label{equalities for the closed connected components}
g(\bigcup_{u\in J_C}\Gamma_{u})=\bigcup_{u\in J_C}\Gamma_{u}=g^{-1}(\bigcup_{u\in J_C}\Gamma_{u})
\end{equation}
hold for every conneccted component $C$ of $G_g$.  It is also clear that $|J_C|$ is the number of vertices in $C$, which is a direct consequence of the fact that $J_{C}$ is the set of indices $u$ for which $v_u$ is a vertex in $C$.
\end{Remark}

\begin{Lemma}\label{upper bound for the size of an edge set}
If $J$ is an arbitrary subset of $\{1,2,\dots,n\}$, then
\begin{equation}
s(\bigcup_{u\in J} E^g_{u})\leq  |J| .
\end{equation}
\end{Lemma}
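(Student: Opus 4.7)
The plan is to reduce the statement to two observations: that the edge families $E^g_u$ for distinct $u$ are disjoint, and that each $G(E^g_u)$ is connected, so that $s(E^g_u) \leq 1$. Then the desired bound follows from sub-additivity of $s$ stated in Eq.~\eqref{s is subadditive}.

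First I would note that, by the definition of $E^g_u$ in Eq.~\eqref{definition of E-x-i}, an edge $e^g_{r,s}$ belongs to $E^g_u$ precisely when the pair $\{r,s\}$ is contained in $\Gamma_u$. Since the $\Gamma_u$ partition $[kn]$, each edge belongs to at most one $E^g_u$; that is, the collection $\{E^g_u\}_{u\in J}$ consists of pairwise disjoint edge sets. Applying sub-additivity iteratively (Eq.~\eqref{s is subadditive}) gives
\[
s\Big(\bigcup_{u\in J} E^g_u\Big) \leq \sum_{u\in J} s(E^g_u).
\]
So it suffices to prove that $s(E^g_u) \leq 1$ for every $u\in[n]$.

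Next I would argue that the sub-graph $G(E^g_u)$ is connected (it may also be a single vertex with loops, or in degenerate cases empty, any of which has at most one connected component). By Lemma~\ref{vertex set of Gamma-l} the vertex set is
\[
V(E^g_u) = \{v_i : \Gamma_i \cap g^{-1}(\Gamma_u) \neq \emptyset\}.
\]
For any two vertices $v_i, v_j \in V(E^g_u)$ pick $r \in \Gamma_i \cap g^{-1}(\Gamma_u)$ and $s \in \Gamma_j \cap g^{-1}(\Gamma_u)$. Then $g(r)$ and $g(s)$ both lie in $\Gamma_u$, so they are partners, and by the definition \eqref{definition of e-x-r-s} the edge $e^g_{g(r),g(s)}$ belongs to $E^g_u$ with end-points $v_{p(g^{-1}(g(r)))} = v_i$ and $v_{p(g^{-1}(g(s)))} = v_j$. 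Hence any two vertices of $V(E^g_u)$ are joined by an edge in $G(E^g_u)$, which forces $G(E^g_u)$ to be connected (indeed, a complete graph on $V(E^g_u)$, possibly with loops).

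Combining the two observations yields $s(\bigcup_{u\in J} E^g_u) \leq \sum_{u\in J} 1 = |J|$, which is the claim. The only non-trivial step is the connectivity of each $G(E^g_u)$; this is where the wreath-product structure (the fact that partnering on $\Gamma_u$ is a complete relation) is used, and it is the real content of the lemma.
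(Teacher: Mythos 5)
Your proof is correct and follows essentially the same route as the paper's: show that each sub-graph $G(E^g_u)$ is connected because any two of its vertices are joined directly by an edge coming from a pair of partners in $\Gamma_u$, and then conclude by the sub-additivity of $s(\cdot)$ from Eq.~\eqref{s is subadditive}. Your version is slightly more careful in checking the disjointness of the families $E^g_u$ and the degenerate cases, but the substance is identical.
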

\begin{proof}
Let $v_i,v_j\in V(E^g_{u})$. Then there exists $r,s\in \Gamma_u$ such that $g^{-1}(r)\in \Gamma_i$ and $g^{-1}(s)\in \Gamma_j$. But this means $v_i$ and $v_j$ are joined by the edge $e^g_{r,s}$. This proves that $G(s(E^g_{u}))$ is connected. The general case follows from the sub-additivity of $s(\cdot)$, cf. Eq.\eqref{s is subadditive}.
\end{proof}

\section{Proof of stability}

In this section we prove the stability theorem.

\begin{Theorem*}[Stability theorem]\label{stability theorem}
Let $M,N,L$ be three modified types. Then
\begin{equation}\label{filtration}
||L||>||M||+||N|| \quad \Longrightarrow \quad c_{M,N}^L(n)=0,
\end{equation}
and
\begin{equation}\label{stability}
||L||\leq ||M||+||N|| \quad \Longrightarrow \quad c_{M,N}^L(n)\geq 0.
\end{equation}
In case of the equality $||L||=||M||+||N||$, the polynomial $c_{M,N}^L(n)$ is constant, i.e. independent of $n$.
\end{Theorem*}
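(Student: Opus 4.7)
The plan is to invoke Corollary \ref{abstract formula for structure functions} to write $c_{M,N}^L(n)$ as a finite sum of polynomial quotients, one for each $H_\infty\times H_\infty$-orbit of representative pairs, with each summand having degree exactly $t_3^{(i)} = |[g_{1i}]_H\setminus [g_{1i}g_{2i}]_H|/k$ by Corollary \ref{quotients are polynomials}. Within each orbit I would choose the representative $(g_{1i}, g_{2i})$ so that both $g_{1i}$ and $g_{1i}g_{2i}$ are minimal representatives with closed connected components. This is possible because within an $H\times H$-orbit the first coordinate ranges over all of $M$, so I may take $g_{1i}$ to be any preferred minimal representative with closed components (cf.\ Lemma \ref{conneced components are mapped to union of parts}); then $g_{1i}g_{2i}$ becomes an $H$-conjugate of the original $g_1g_2$, a property that together with minimality and the closed-component identity $g(A_C)=A_C$ is preserved under $H$-conjugation via the induced graph isomorphism.

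The heart of the argument is the weight inequality $||L|| \leq ||M|| + ||N||$ for each such pair. I would set this up as a graph evolution $G_{g_{2i}} \to G_{g_{1i}g_{2i}}$ in the sense of Definition \ref{edge replacement definition}, indexed by the connected components $C$ of $G_{g_{1i}}$: for each such $C$ with vertex index set $J_C$, I take
\[
E_{0C} = \bigcup_{u \in J_C} E^{g_{2i}}_u, \qquad E_{1C} = \bigcup_{u \in J_C} E^{g_{1i}g_{2i}}_u.
\]
These partition the edge sets of $G_{g_{2i}}$ and $G_{g_{1i}g_{2i}}$ respectively, and the crucial endpoint identity $V(E_{0C}) = V(E_{1C})$ follows from Lemma \ref{vertex set of Gamma-l} once one uses $g_{1i}^{-1}(A_C) = A_C$ (closed components) to show that both sets equal $\{v_\ell : \Gamma_\ell \cap g_{2i}^{-1}(A_C) \neq \emptyset\}$, where $A_C = \bigcup_{u\in J_C}\Gamma_u$. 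Applying Lemma \ref{weight formula for abstract edge replacement} together with the bound $s(\bigcup_{u \in J_C} E^{g_{2i}}_u) \leq |J_C|$ from Lemma \ref{upper bound for the size of an edge set}, and noting that $\sum_C |J_C| = n$ while the number of steps equals $w(G_{g_{1i}})$, yields $||L|| \leq ||N|| + n - w(G_{g_{1i}}) = ||N|| + ||M||$. The first assertion of the theorem is then immediate, since if $||L|| > ||M|| + ||N||$ no contributing pair can exist; the second assertion reduces to the already-established non-negativity of the structural coefficients.

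For the equality case I would combine two ingredients. First, Corollary \ref{equality implies connected components merge} applied to the same evolution gives that every connected component of $G_{g_{2i}}$ remains inside a component of $G_{g_{1i}g_{2i}}$, which is equivalent to the containment $[g_{2i}]_H \subset [g_{1i}g_{2i}]_H$. The hard part is to upgrade this to $[g_{1i}]_H \subset [g_{1i}g_{2i}]_H$, the condition $t_3^{(i)} = 0$ needed to force constancy of each summand. I would argue by contradiction: if $\Gamma_j \in [g_{1i}]_H \setminus [g_{1i}g_{2i}]_H$, then minimality of $g_{1i}g_{2i}$ forces it to fix $\Gamma_j$ pointwise, so $g_{2i}|_{\Gamma_j} = g_{1i}^{-1}|_{\Gamma_j}$; since $\Gamma_j \in [g_{1i}]_H = [g_{1i}^{-1}]_H$ by Lemma \ref{minimal representative lemma}, the set $g_{1i}^{-1}(\Gamma_j)$ is not a single $\Gamma$-part, hence neither is $g_{2i}(\Gamma_j)$, giving $\Gamma_j \in [g_{2i}]_H \subset [g_{1i}g_{2i}]_H$, a contradiction. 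With $t_3^{(i)} = 0$ for every $i$, Remark \ref{stability condition remark} collapses each summand to the $n$-independent constant $\zeta_{g_{1i},g_{2i}}$, proving the theorem. The principal obstacle I anticipate is engineering the edge replacement so that $V(E_{0C}) = V(E_{1C})$ holds, which is precisely what forces the careful choice of minimal representatives with closed components described above.
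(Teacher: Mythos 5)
Your proposal is correct and follows essentially the same route as the paper: the same normalization to minimal representatives with $g_1$ having closed connected components, the same graph evolution $G_{g_2}\to G_{g_1g_2}$ indexed by the connected components of $G_{g_1}$, the same weight bound via Lemma \ref{weight formula for abstract edge replacement} and Lemma \ref{upper bound for the size of an edge set}, and the same appeal to Corollary \ref{equality implies connected components merge} and Remark \ref{stability condition remark} in the equality case. The only (valid) local variation is that you derive $[g_1]_H\subseteq[g_1g_2]_H$ directly by contradiction from $[g_2]_H\subseteq[g_1g_2]_H$, whereas the paper first establishes $\N(g_2)\subseteq[g_1g_2]_H$ and then concludes that $g_1=(g_1g_2)g_2^{-1}$ fixes $([g_1g_2]_H)^c$ pointwise.
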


Let $g_1,g_2$ be two permutations in $S_{mk}$. Assume that the permutations $g_1$ and $g_1g_2$ are minimal representatives of their
$B$-double cosets. If necessary by replacing $g_1$ with $hg_1$ and $g_2$ with $g_2h^{-1}$ for some suitably chosen $h$ in $H$, we may assume that $g_1$ and $g_1g_2$ are minimal representatives, and $g_1$ has closed connected components. Our aim is to show that $G_{g_1g_2}$ is evolved from $G_{g_2}$. Let $C_1,\cdots,C_t$ be the connected components of $G_{g_1}$. If we denote the number of vertices of $C_i$ by $l_i$, then  
\begin{equation}\label{weight of G-x in terms of n-i}
||G_{g_1}||=\sum_{i=1}^t (l_i-1).
\end{equation} We note that by Remark \ref{equalities for the closed connected components remark} the set equality
\begin{equation}\label{x conneced components are mapped to union of parts}
\bigcup_{u\in J_C} g_1(\Gamma_{u})=\bigcup_{u\in J_C} \Gamma_{u},
\end{equation} holds for every connected component $C$ of $G_{g_1}$, since $g_1$ is an element with closed connected components. For each connected component $C$ of $G_g$ we will define edge sets $E^{g_2}_C$ and $E^{g_1g_2}_C$ that will serve as the edge replacement pairs of the evolution of $M_{g_2}$ into $M_{g_1g_2}$. In fact, let
\begin{equation}
E^{g_2}_C:=\bigsqcup_{u\in J_C}E^{g_2}_{u} \qquad \& \qquad E^{g_1g_2}_C:=\bigsqcup_{u\in J_C}E^{g_1g_2}_{u}
\end{equation}
Since the sets $J_C$ covers $[n]$ as $C$ runs over the connected components of $G_{g_1}$, it follows that
\[
E_{g_2}=\bigsqcup_{i=1}^tE^{g_2}_{C_i} \qquad \& \qquad E_{g_1g_2}=\bigsqcup_{i=1}^t E^{g_1g_2}_{C_i}.
\]
\begin{Lemma}
If $C$ is a connected component of $G_{g_1}$ then the equality
\begin{equation}
V(E^{g_2}_{C})=V(E^{g_1g_2}_{C}).
\end{equation}
between the end-points of the edge sets $E_C^{g_2}$ and $E_C^{g_1g_2}$ holds.
\end{Lemma}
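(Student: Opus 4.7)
The plan is to unravel both sides via Lemma \ref{vertex set of Gamma-l} and then exploit the closed-connected-components hypothesis on $g_1$ to identify the relevant preimages.

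More concretely, I would first apply Lemma \ref{vertex set of Gamma-l} with $J = J_C$ to the permutations $g_2$ and $g_1g_2$ respectively. This gives
\[
V(E^{g_2}_{C}) = \Bigl\{\,v_i \in V \;\Bigm|\; \Gamma_i \cap g_2^{-1}\Bigl(\bigcup_{u\in J_C}\Gamma_u\Bigr) \neq \emptyset\,\Bigr\},
\]
and similarly
\[
V(E^{g_1g_2}_{C}) = \Bigl\{\,v_i \in V \;\Bigm|\; \Gamma_i \cap (g_1g_2)^{-1}\Bigl(\bigcup_{u\in J_C}\Gamma_u\Bigr) \neq \emptyset\,\Bigr\}.
\]
So the lemma reduces to showing that the preimages of $\bigcup_{u\in J_C}\Gamma_u$ under $g_2$ and under $g_1g_2$ coincide as subsets of $\N_+$.

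The next step is the core observation: since $g_1$ has closed connected components, Remark \ref{equalities for the closed connected components remark} (i.e.\ Eq.\ \eqref{equalities for the closed connected components}) gives that $g_1$ stabilizes the set $U_C := \bigcup_{u\in J_C}\Gamma_u$ setwise, and hence so does $g_1^{-1}$. Writing $(g_1g_2)^{-1} = g_2^{-1}g_1^{-1}$, we therefore have
\[
(g_1g_2)^{-1}(U_C) \;=\; g_2^{-1}\bigl(g_1^{-1}(U_C)\bigr) \;=\; g_2^{-1}(U_C),
\]
and plugging this equality back into the two displayed characterizations yields $V(E^{g_2}_C) = V(E^{g_1g_2}_C)$.

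There is no real obstacle here; the entire argument is bookkeeping once one notices that the only place where $g_1$ enters $(g_1g_2)^{-1}(U_C)$ is through $g_1^{-1}$ acting on $U_C$, which is neutralized by the closed-connected-components property. The only subtlety worth mentioning is that the hypothesis \emph{closed connected components} is exactly what is needed: without it, $g_1^{-1}(U_C)$ could spill outside of $U_C$ and the vertex sets would genuinely differ, which is also the reason why in the preceding setup one first replaced $g_1$ by an $H$-translate with closed connected components via Lemma \ref{conneced components are mapped to union of parts}.
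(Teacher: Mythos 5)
Your proposal is correct and follows essentially the same route as the paper: both reduce the claim via Lemma \ref{vertex set of Gamma-l} to comparing $g_2^{-1}\bigl(\bigcup_{u\in J_C}\Gamma_u\bigr)$ with $(g_1g_2)^{-1}\bigl(\bigcup_{u\in J_C}\Gamma_u\bigr)$, and both close the gap using the set equality from Remark \ref{equalities for the closed connected components remark} (the paper writes $g_2^{-1}=(g_1g_2)^{-1}g_1$ where you write $(g_1g_2)^{-1}=g_2^{-1}g_1^{-1}$, which is the same manipulation). Your closing remark on why the closed-connected-components hypothesis is exactly what is needed is a nice addition but does not change the substance.
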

\begin{proof}
By Lemma \ref{vertex set of Gamma-l} and Eq.\eqref{x conneced components are mapped to union of parts} we have
\begin{eqnarray}
V(E^{g_2}_{C}) & = & V(\bigcup_{j\in J_C}E^{g_2}_{j})\nonumber\\
(\text{By Lemma \ref{vertex set of Gamma-l}}) & = & \{v_i\in V: \Gamma_i\cap g_2^{-1}(\bigcup_{u\in J_C}\Gamma_{u})\neq \emptyset \}\nonumber\\
& = &\{v_i\in V: \Gamma_i\cap (g_1g_2)^{-1}g_1(\bigcup_{u\in J_C}(\Gamma_{u})\neq \emptyset \}\nonumber\\
(\text{By Eq.\eqref{x conneced components are mapped to union of parts}})& = & \{v_i\in V: \Gamma_i\cap (g_1g_2)^{-1}(\bigcup_{u\in J_C}\Gamma_{u})\neq \emptyset \}\nonumber\\
(\text{By Lemma \ref{vertex set of Gamma-l}})  & = & V(\bigcup_{j\in J_C}E^{g_1g_2}_{j})\nonumber\\
& = & V(E^{g_1g_2}_{C}).
\end{eqnarray}
\end{proof}

By the Lemma we see that $G_{g_1g_2}$ is evolved from $G_{g_2}$ through the edge replacement pairs $(E^{g_2}_{C_i},E^{g_1g_2}_{C_i})_{i=1}^t$. Since the edge set $E^{g_2}_{C_i}$ is equal to the union of $E^{g_2}_{u}$ where $u\in J_{C_i}$, Lemma \ref{upper bound for the size of an edge set} implies that
\begin{equation}\label{upper bound for E-C-g-2}
s(E^{g_2}_{C_i})\leq |J_{C_i}|=l_i.
\end{equation}
Invoking Lemma \ref{weight formula for abstract edge replacement} we deduce the following:
\begin{eqnarray}
||G_{g_1g_2}|| & \leq & ||G_{g_2}||+\sum_{i=1}^t(s(E^{g_2}_{C_i})-1)\nonumber\\
\text{(By Eq.\eqref{upper bound for E-C-g-2})}& \leq & ||G_{g_2}||+\sum_{i=1}^t(l_i-1)\nonumber\\
(\text{By Eq. \eqref{weight of G-x in terms of n-i}})& = & ||G_{g_2}||+||G_{g_1}||.
\end{eqnarray}
This proves the first statement of the stability theorem which states that $c_{M,N}^L=0$ whenever $||L||$ exceeds the sum $||M||+||N||$.
\begin{Proposition}
Let $g_1,g_2$ be two permutations in $S_{kn}$. Assume that $g_1$ is a minimal representative with closed connected components and $g_1g_2$ is a minimal representative. If the equality
\[
||G_{g_1g_2}||=||G_{g_1}||+||G_{g_2}||
\]
holds then the following hold:
\begin{enumerate}
\item $[g_2]_H\subseteq [g_1g_2]_H$,
\item $\N(g_2)\subseteq [g_1g_2]_H$,
\item $[g_1]_H\subseteq [g_1g_2]_H$.
\end{enumerate}
\end{Proposition}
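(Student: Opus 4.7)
The strategy is to exploit that the weight equality $||G_{g_1g_2}|| = ||G_{g_1}|| + ||G_{g_2}||$ saturates the entire chain of estimates just established. Concretely, it forces $s(E^{g_2}_{C_i}) = l_i$ for every connected component $C_i$ of $G_{g_1}$, and it makes Corollary~\ref{equality implies connected components merge} applicable to the evolution $G_{g_2} \to G_{g_1g_2}$ through the pairs $(E^{g_2}_{C_i}, E^{g_1g_2}_{C_i})$; hence each connected component of $G_{g_2}$ remains inside a single connected component of $G_{g_1g_2}$. Part (1) is then immediate: by Remark~\ref{isolated not in the support}, $\Gamma_i \subset [g_2]_H$ says precisely that $v_i$ sits in a component of $G_{g_2}$ with at least two vertices, and since that component cannot split under the evolution, $v_i$ still lies in a component of $G_{g_1g_2}$ of size at least two, so $\Gamma_i \subset [g_1g_2]_H$.

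I would attack (3) by contrapositive. Suppose $\Gamma_j \not\subset [g_1g_2]_H$, so $g_1g_2(\Gamma_j) = \Gamma_{j'}$ for some $j'$. The minimality of $g_1g_2$ forces $j' = j$ and $g_1g_2|_{\Gamma_j} = \mt{id}$, hence $g_2(r) = g_1^{-1}(r)$ for every $r \in \Gamma_j$, i.e., $g_2(\Gamma_j) = g_1^{-1}(\Gamma_j)$. If $g_1^{-1}(\Gamma_j)$ were not a $\Gamma$-part then neither would $g_2(\Gamma_j)$ be, so $\Gamma_j \subset [g_2]_H$; by part (1) this would yield $\Gamma_j \subset [g_1g_2]_H$, contradicting the hypothesis. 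Therefore $g_1^{-1}(\Gamma_j) = \Gamma_u$ for some $u$, so $g_1(\Gamma_u) = \Gamma_j$, and the minimality of $g_1$ forces $u = j$ with $g_1|_{\Gamma_j} = \mt{id}$. In particular $g_1(\Gamma_j) = \Gamma_j$, so $\Gamma_j \not\subset [g_1]_H$, as required.

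Part (2) then drops out from (3) and the two minimality hypotheses: if $r \in \Gamma_j \not\subset [g_1g_2]_H$, the minimality of $g_1g_2$ gives $g_1g_2(r) = r$, while (3) together with the minimality of $g_1$ gives $g_1(r) = r$; the two equalities force $g_2(r) = r$, so $r \notin \N(g_2)$. The main obstacle is (3): the evolution argument controls only the support of $g_2$, and one must bootstrap that into a statement about the support of $g_1$. The pivotal identity $g_2(\Gamma_j) = g_1^{-1}(\Gamma_j)$, valid on the $\Gamma$-parts pointwise fixed by $g_1g_2$, is what transports the $\Gamma$-part condition between $g_1$ and $g_2$, allowing (1) to be invoked to close the loop.
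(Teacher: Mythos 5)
Your proof is correct and follows essentially the same route as the paper: part (1) is argued exactly as there (via Corollary~\ref{equality implies connected components merge} and Remark~\ref{isolated not in the support}), and parts (2)--(3) rest on the identical mechanism --- on a $\Gamma$-part outside $[g_1g_2]_H$ minimality forces $g_1g_2$ to act as the identity, so $g_2=g_1^{-1}$ there, and part (1) together with minimality of $g_1$ pins both permutations down to the identity on that part. The only difference is the order: you establish (3) directly by contrapositive and deduce (2) from it, whereas the paper proves (2) directly (for $r\in\N(g_2)\setminus[g_2]_H$) and then gets (3) in one line; the two orderings are logically interchangeable.
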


Before proving the Proposition we note the following. In the light of Remark \ref{stability condition remark}, the inclusion $[g_1]_H\subseteq [g_1g_2]_H$ implies that $c_{M,N}^K(n)$ is independent of $n$, hence finishes the proof of the stability theorem. Note also that $\N(g_1)\subset [g_1]_H$ and $\N(g_1g_2)\subset [g_1g_2]_H$ because $g_1$ and $g_1g_2$ are minimal representatives. However, it is not necessarily true that $\N(g_2)\subset [g_2]$.

\begin{proof}
\begin{enumerate}
\item Assume that for some positive integer $j$  the set $\Gamma_j$ is a subset of $[g_2]_H$. Then by Remark
\ref{isolated not in the support}, the vertex $v_j$ is not an isolated vertex of $G_{g_2}$. If  $C$ denotes the connected component of $v_j$ then
$|C|\geq 2$. By Remark \ref{equality implies connected components
merge}, the set of vertices of the connected component $C$
completely contained in a connected component $C'$ of $G_{g_1g_2}$. As a result
$|C'|\geq 2$. Thus the vertex $v_j$ is not an isolated vertex of $G_{g_1g_2}$. Using Remark
\ref{isolated not in the support} once again, we see that $\Gamma_j\subset [g_1g_2]_H$.
This proves the inclusion $[g_2]_H\subset[g_1g_2]_H$.
\item Next we prove $\N(g_2)\subseteq
[g_1g_2]_H$. Using the previous item it suffices to show that $\N(g_2)-[g_2]_H\subset[g_1g_2]_H$. So let $r$ be an integer contained in the difference $\N(g_2)-[g_2]_H$. For simplicity we write $p(r)=i$, hence we have $r\in \Gamma_i$. Since $H$-supports contain elements that are partners as a whole, it follows that $\Gamma_{i}$ is a subset of the complement $[g_2]^c_H$. By definition of $H$-support $g(\Gamma_i)=\Gamma_j$ for some positive integer $j$. Since $r$ is contained in the complement $[g_1g_2]^c_H$ it follows that $\Gamma_i$ is a subset of the complement $[g_1g_2]^c_H$. By the minimality, the permutation $g_1g_2$ acts on the complement $[g_1g_2]^c_H$ as identity. But this means
\[
\Gamma_i=g_1g_2(\Gamma_i)=g_1(\Gamma_j).
\]
Since $g_1$ is a minimal representative it follows that $i=j$ and $g_1$ acts on $\Gamma_i$ as identity. As a result $g_2$ acts on $\Gamma_i$ as identity, in particular $g_2(r)=r$. But we picked the element $r$ from $\N(g_2)-[g_2]$, a contradiction. So either $\N(g_2)-[g_2]=\emptyset$ or our assumption $r\notin[g_1g_2]_H$ is wrong. Both cases imply that $\N(g_2)\subset [g_1g_2]_H$.

\item It suffices to show that $g_1$ acts on $([g_1g_2]_H)^c$ as identity. But this is clear since $g_2$ and $g_1g_2$ act on $([g_1g_2]_H)^c$ as identity.
\end{enumerate}
\end{proof}

\bibliographystyle{plain}
\bibliography{Stability_of_the_Hecke_algebra_of_wreath_products}

\end{document}